\DeclareMathOperator{\diam}{diam}
\newcommand{\N}{\mathbb{N}}
\newcommand{\Q}{\mathbb{Q}}
\newcommand{\R}{\mathbb{R}}
\renewcommand{\subset}{\subseteq}
\renewcommand{\supset}{\supseteq}
\newtheorem{thm}{Theorem} [section]
\newtheorem{lem}[thm]{Lemma}
\newtheorem{prop}[thm]{Proposition}
\newtheorem{cor}[thm]{Corollary}
\newtheorem{conj}[thm]{Conjecture}
\newtheorem{claim}[thm]{Claim}
\theoremstyle{definition} 
\newtheorem{exmp}[thm]{Example}
\theoremstyle{remark}
\newtheorem*{remark}{Remark}
\newtheorem*{remarks}{Remarks}
\numberwithin{equation}{section}
\begin{document}
	
\title[Zero-full law in missing digit sets]{Zero-full law for well approximable sets in missing digit sets}

\author{Bing Li} 
\email{scbingli@scut.edu.cn}
\address{School of Mathematics, South China University of Technology, Guangzhou, China, 510641}
\author{Ruofan Li} 
\email{liruofan@jnu.edu.cn}
\address{Department of Mathematics, Jinan University, Guangzhou, China, 510632}
\author{Yufeng Wu}
\email{yufengwu.wu@gmail.com}
\address{School of Mathematics and Statistics, Central South University, Changsha, China, 410083}

\begin{abstract}
Let $b \geq 3$ be an integer and $C(b,D)$ be the set of real numbers in $[0,1]$ whose base $b$ expansion only consists of digits in a set $D \subset \{0,...,b-1\}$. We study how close can numbers in $C(b,D)$ be approximated by rational numbers with denominators being powers of some integer $t$ and obtain a zero-full law for its Hausdorff measure in several circumstances. When $b$ and $t$ are multiplicatively dependent, our results correct an error of Levesley, Salp and Velani ({\em Math. Ann.}, 338:97–118, 2007) and generalize their theorem. When $b$ and $t$ are multiplicatively independent but have the same prime divisors, we obtain a partial result on the Hausdorff measure and bounds for the Hausdorff dimension, which are close to the multiplicatively dependent case. Based on these results, several conjectures are proposed.
\end{abstract}
\maketitle

\section{Introduction}
In an influential article \cite{Mahler84}, Mahler asked \enquote{How close can irrational elements of Cantor's set be approximated by rational numbers?}. This question has inspired a wide range of research, such as \cite{Bugeaud08,FS14,LLW,Schleischitz21} and references therein. In \cite{LSV07}, Levesley, Salp and Velani showed that there exist numbers which are not Liouville numbers in the middle-third Cantor set $C$ that can be very well approximated by rational numbers whose denominators are powers of $3$. More precisely, let $\psi : \N \to (0,\infty)$ be a function and define
\begin{align*}
W_{3} (\psi) = \left\{x \in [0,1] \colon \abs{x - \frac{p}{3^n}} < \psi(n) \text{ for i.m. } (p,n) \in \N^2\right\},
\end{align*}
where i.m. is short for \enquote{infinitely many}. They proved that the $f$-Hausdorff measure $\mathcal{H}^{f}$ (we refer to \cref{sec:Pre} for terminologies) of $W_{3} (\psi) \cap C$ satisfies a zero-full law.

\begin{thm}[{\cite[Theorem 1]{LSV07}}] \label{thm:LSV}
Let $f$ be a dimension function such that $f(r) r^{-\log2 / \log 3} $ is monotonic and $\psi : \N \to (0,\infty)$ be a function. Then
\begin{align*}
\mathcal{H}^{f} (W_{3}(\psi) \cap C) = \begin{cases}
0, &\text{ if } \sum_{n=1}^{\infty} f(\psi(n)) \times 3^{n \log2 / \log 3} < \infty, \\
\mathcal{H}^{f}(C), &\text{ if } \sum_{n=1}^{\infty} f(\psi(n)) \times 3^{n \log2 / \log 3} = \infty.
\end{cases}
\end{align*}
\end{thm}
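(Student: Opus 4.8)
The plan is to treat $W_3(\psi)\cap C$ as a $\limsup$ set of balls centred at triadic points near $C$ and to handle the convergence (\enquote{zero}) and divergence (\enquote{full}) halves separately, exploiting the self-similar structure of $C$. Writing $J_n=\{p\in\{0,1,\dots,3^{n}\}:\operatorname{dist}(p/3^{n},C)<\psi(n)\}$, we have $W_3(\psi)\cap C=C\cap\bigcap_{N\ge1}\bigcup_{n\ge N}\bigcup_{p\in J_n}B(p/3^{n},\psi(n))$. The facts about $C$ I would use are: it is Ahlfors regular of exponent $s=\log2/\log3$, so the natural Cantor measure obeys $\mu(B(x,r))\asymp r^{s}$ for $x\in C$ and $0<\mathcal H^{s}(C)<\infty$; the endpoints of the $2^{n}$ level-$n$ triadic intervals of $C$ all lie in $C$, hence always belong to $J_n$, and when $\psi(n)<3^{-n}$ they are its only members, so $\#J_n=2^{n+1}$; and, setting $g(r):=r^{-s}f(r)$, one has the identity $2^{n}f(3^{-n})=g(3^{-n})$. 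We may assume $\mathcal H^{f}(C)>0$ (the statement is trivial otherwise), equivalently that $g(r)\not\to0$ as $r\to0$, so that $g$ is bounded below by a positive constant on $\{3^{-n}\}_{n\ge1}$; the case when $g$ is non-decreasing and bounded is just the case $f\asymp r^{s}$, so I concentrate on $g$ non-increasing, when $f$ is additionally doubling, $f(2r)\le2^{s}f(r)$. Finally I dispose of the possibility that $\psi(n)\ge 3^{-n}$ for infinitely many $n$: then $\sum_n 2^{n}f(\psi(n))\ge\sum_{\psi(n)\ge 3^{-n}}2^{n}f(3^{-n})=\sum_{\psi(n)\ge 3^{-n}}g(3^{-n})=\infty$, and every $x\in C$ outside the countable set of triadic rationals lies, for each such $n$, in the interior of its level-$n$ interval and hence within $3^{-n}\le\psi(n)$ of the corresponding endpoint in $J_n$, so $W_3(\psi)\supseteq C$ up to a countable set and $\mathcal H^{f}(W_3(\psi)\cap C)=\mathcal H^{f}(C)$, consistently with the divergence alternative. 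From now on $\psi(n)<3^{-n}$ for all large $n$, so $\#J_n=2^{n+1}$ eventually and $\psi(n)\to0$.

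For the convergence part, suppose $\sum_n 2^{n}f(\psi(n))<\infty$. Given $\delta>0$, choose $N$ with $2\psi(n)\le\delta$ and $\psi(n)<3^{-n}$ for all $n\ge N$; the balls $B(p/3^{n},\psi(n))$ with $n\ge N$ and $p\in J_n$ form a $\delta$-cover of $W_3(\psi)\cap C$, so
\[
\mathcal H^{f}_{\delta}(W_3(\psi)\cap C)\le\sum_{n\ge N}\#J_n\, f(2\psi(n))\lesssim\sum_{n\ge N}2^{n}f(\psi(n))
\]
by the doubling of $f$, and the right-hand side tends to $0$ as $N\to\infty$. Letting $\delta\to0$ gives $\mathcal H^{f}(W_3(\psi)\cap C)=0$.

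For the divergence part, suppose $\sum_n 2^{n}f(\psi(n))=\infty$, and combine two ingredients. The first is a divergence (ubiquity) statement in the $s$-Ahlfors-regular space $C$: if $\rho_n\le3^{-n}$ and $\sum_n 2^{n}\rho_n^{s}=\infty$, then, because the centres $\{p/3^{n}:p\in J_n\}$ form a $3^{-n}$-net of $C$, the set $\limsup_n\bigcup_{p\in J_n}B(p/3^{n},\rho_n)$ has full $\mathcal H^{s}$-measure in $C$, and — the net property passing to sub-balls — in every ball meeting $C$. I apply this with $\rho_n=\min(f(\psi(n))^{1/s},3^{-n})$; the truncation is harmless, since it changes only the terms with $f(\psi(n))>2^{-n}$, of which either finitely many occur (leaving $\sum_n 2^{n}\rho_n^{s}$ divergent, as it then differs from $\sum_n 2^{n}f(\psi(n))$ by finitely many terms) or infinitely many (whence already $\sum_n 2^{n}\rho_n^{s}\ge\sum1=\infty$). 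The second ingredient is the mass transference principle for the $s$-Ahlfors-regular set $C$ and the dimension function $f$, valid because $r^{-s}f(r)$ is monotone: it converts the full-$\mathcal H^{s}$-measure statement just obtained for the dilated balls $B(p/3^{n},\rho_n)$ into $\mathcal H^{f}\bigl(\limsup_n\bigcup_{p\in J_n}B(p/3^{n},\psi(n))\cap C\bigr)=\mathcal H^{f}(C)$, which is the assertion. Equivalently, the divergence half may be run within the Beresnevich--Dickinson--Velani ubiquity framework, or one can construct a Cantor-type subset of the $\limsup$ set by hand and estimate its $\mathcal H^{f}$-measure from below via the mass distribution principle, restarting the construction in an arbitrary ball so as to obtain fullness.

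I expect the main obstacle to be the intermediate range where $\psi(n)$ is comparable to — but still below — $3^{-n}$. There the $f$-dilated radius $f(\psi(n))^{1/s}$ can exceed the construction scale $3^{-n}$ of $C$ (precisely when $f(\psi(n))>2^{-n}$, i.e.\ when $g(3^{-n})$ is large), so one must align the admissible centres $p/3^{n}$ with triadic intervals of a coarser level and control the ensuing covering multiplicities, both when verifying $\sum_n 2^{n}\rho_n^{s}=\infty$ and when applying the mass transference principle; I would expect the slip in \cite{LSV07} to occur exactly in this bookkeeping. A second delicate point is securing enough quantitative independence of the nets $\{p/3^{n}:p\in J_n\}$ inside $C$ for the divergence statement to deliver the full value $\mathcal H^{f}(C)$ — not merely a positive amount — in every ball; this is what makes the dichotomy a genuine zero--\emph{full} law rather than a zero--positive one.
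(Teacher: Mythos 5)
Your skeleton matches the one used in the paper (and in Levesley--Salp--Velani): the preliminary reductions, the covering argument for the convergence half, and the strategy of first establishing a full-$\mathcal{H}^{s}$-measure statement for suitably shrunken balls and then upgrading to $\mathcal{H}^{f}$ via the mass transference principle are exactly what is done here (\cref{thm:LSV} is the case $b=t=3$, $D=\{0,2\}$, $m=0$ of \cref{thm:b}, which the paper derives from \cref{thm:A}). Your handling of the truncation $\rho_n=\min\bigl(f(\psi(n))^{1/s},3^{-n}\bigr)$ and of the degenerate case where $\psi(n)\ge 3^{-n}$ infinitely often is also correct.

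The gap is in the one substantive step of the divergence half. You assert that $\limsup_n\bigcup_{p\in J_n}B(p/3^{n},\rho_n)$ has full $\mathcal{H}^{s}$-measure in every ball \enquote{because the centres form a $3^{-n}$-net of $C$}. The net property only says that the balls of radius $3^{-n}$ cover $C$ at each level; it does not by itself give full (or even positive) measure for the limsup of the much smaller balls of radius $\rho_n$. What is actually needed --- and what you correctly flag at the end as the \enquote{second delicate point} but then do not supply --- is a pairwise quasi-independence estimate: writing $E_i=B\cap\bigcup_{p\in J_i}B(p/3^{i},\rho_i)$ for a fixed ball $B$ centred in $C$, one must show $\mu(E_i\cap E_j)\le K\,\mu(E_i)\mu(E_j)/\mu(B)$ for $i>j$. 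In the paper this is \cref{lem:quasi-independence}, proved by counting how many level-$i$ balls can meet a single level-$j$ ball, using the disjointness of the balls $B(p/3^{i},3^{-i}/2)$ and the Ahlfors regularity of $\mathcal{H}^{s}\vert_{C}$. Feeding this into the second-moment Borel--Cantelli inequality (\cref{lem:Div_BC}) yields $\mu(B\cap\limsup_i E_i)\ge\mu(B)/K$ for every ball $B$, and \cref{lem:full_measure} then upgrades a uniform positive proportion in every ball to full measure; only after that does \cref{thm:MTP} apply. Your alternative of invoking the Beresnevich--Dickinson--Velani ubiquity framework would also work, but its hypotheses (regularity of the scaling function, measure of the $\rho_n$-neighbourhoods of the resonant set in arbitrary balls) amount to essentially the same verification; as written, the full-measure statement at exponent $s$ is asserted rather than proved, and this is precisely the core of the theorem.
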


In the same article, the authors claimed their method can also yield a generalization of \cref{thm:LSV}. For any integer $b \geq 3$ and set $D \subset \{0,...,b-1\}$ with cardinality between $2$ and $b-1$, the \textit{missing digit set} $C(b,D)$ is defined as the set of real numbers in $[0,1]$ whose base $b$ expansions only consist of digits in $D$. 

\begin{claim}[{\cite[Theorem 4]{LSV07}}] \label{claim}
Let $C(b,D)$ be a missing digit set with Hausdorff dimension $\gamma$, $f$ be a dimension function such that $r^{-\gamma}f(r)$ is monotonic and $\psi : \N \to (0,\infty)$ be a function. Then
\begin{align*}
\mathcal{H}^{f} (W_{b}(\psi) \cap C(b,D)) = \begin{cases}
0, \hspace{-5.4pt}&\text{ if } \sum_{n=1}^{\infty} f(\psi(n)) b^{n \gamma} < \infty, \\
\mathcal{H}^{f}(C(b,D)), \hspace{-5.4pt}&\text{ if } \sum_{n=1}^{\infty} f(\psi(n)) b^{n \gamma} = \infty.
\end{cases}
\end{align*}
\end{claim}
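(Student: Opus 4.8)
The plan is to follow the scheme of \cref{thm:LSV}, treating the two halves of the zero--full law separately, and then to pinpoint where it must break. Write $\gamma=\log\abs{D}/\log b$, let $\mu$ be the natural self-similar probability measure on $C(b,D)$ (so $\mu$ is $\gamma$-Ahlfors regular and $\mu\asymp\mathcal{H}^{\gamma}|_{C(b,D)}$), and for $n\ge1$ put
\begin{align*}
\Delta_n:=\bigl\{x\in[0,1]:\abs{x-p/b^n}<\psi(n)\text{ for some }p\in\Z\bigr\},
\end{align*}
so that $W_b(\psi)\cap C(b,D)=\limsup_n\bigl(\Delta_n\cap C(b,D)\bigr)\subseteq\bigcup_{n\ge N}\bigl(\Delta_n\cap C(b,D)\bigr)$ for every $N$. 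I will use the monotonicity of $r^{-\gamma}f(r)$, which is evidently the intended hypothesis.

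\textbf{Convergence half.} Assume $\sum_n f(\psi(n))\,b^{n\gamma}<\infty$; then $\psi(n)<b^{-n}$ for all large $n$, since otherwise the series diverges. If $x\in C(b,D)\cap\Delta_n$ then $x$ lies in some level-$n$ basic interval $[kb^{-n},(k+1)b^{-n}]$ within distance $\psi(n)$ of one of its two endpoints. Covering the $\psi(n)$-neighbourhood of each relevant endpoint by level-$m(n)$ cylinders of $C(b,D)$, where $m(n):=\lceil-\log_b\psi(n)\rceil$ so that $b^{-1}\psi(n)\le b^{-m(n)}\le\psi(n)$, uses at most $c_b$ of them, for a constant $c_b$ depending only on $b$; hence $\Delta_n\cap C(b,D)$ is covered by at most $c_b\abs{D}^{n}=c_bb^{n\gamma}$ sets of diameter $\le b^{-m(n)}\le\psi(n)<b^{-n}$. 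As $f$ is non-decreasing,
\begin{align*}
\mathcal{H}^{f}_{b^{-N}}\bigl(W_b(\psi)\cap C(b,D)\bigr)\le c_b\sum_{n\ge N}b^{n\gamma}f(\psi(n)),
\end{align*}
which tends to $0$ as $N\to\infty$, so $\mathcal{H}^{f}\bigl(W_b(\psi)\cap C(b,D)\bigr)=0$.

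\textbf{Divergence half, and where it breaks.} Assume $\sum_n f(\psi(n))\,b^{n\gamma}=\infty$. The intended route is: (i) either a Cantor-subset construction with an associated mass distribution, or a version of the Mass Transference Principle adapted to the Ahlfors regular set $C(b,D)$, reduces the $\mathcal{H}^{f}$-statement to the $\mathcal{H}^{\gamma}$-statement for $W_b(\psi^{*})$, where $\psi^{*}(n)^{\gamma}\asymp f(\psi(n))$; (ii) one proves $\mathcal{H}^{\gamma}\bigl(C(b,D)\setminus W_b(\psi^{*})\bigr)=0$ by passing to a subsequence $n_1<n_2<\cdots$ along which the finite digit blocks forcing membership in a full-$\mu$-proportion subset of $\Delta_{n_k}\cap C(b,D)$ are pairwise disjoint (the divergence persisting along some such subsequence), so that the resulting events are $\mu$-independent, and then applying the second Borel--Cantelli lemma. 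Both steps rely on the estimate $\mu\bigl(\Delta_n\cap C(b,D)\bigr)\asymp b^{n\gamma}\psi(n)^{\gamma}$ with constants depending only on $b$ and $D$, and this is exactly where \cref{claim}, as stated, fails. Indeed $\operatorname{dist}(x,b^{-n}\Z)=b^{-n}\min\bigl\{\{b^nx\},1-\{b^nx\}\bigr\}$, and for $x\in C(b,D)$ one has $\{b^nx\}=\sum_{j\ge1}d_{n+j}(x)b^{-j}\in C(b,D)\subseteq[\,\min D/(b-1),\ \max D/(b-1)\,]$; so if $0\notin D$ and $b-1\notin D$ then $\operatorname{dist}\bigl(C(b,D),b^{-n}\Z\bigr)\ge\delta b^{-n}$ with $\delta:=\min\{\min D,\,b-1-\max D\}/(b-1)>0$, whence $\Delta_n\cap C(b,D)=\varnothing$ once $\psi(n)\le\delta b^{-n}$, while $\sum_n f(\psi(n))b^{n\gamma}$ may still diverge (take $\psi(n)=\tfrac{\delta}{2}b^{-n}$, $f(r)=r^{\gamma}$). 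Thus the displayed estimate, and with it \cref{claim}, is false in this regime.

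So the main obstacle is not a technical hurdle but this genuine gap: the correct hypothesis should be $D\cap\{0,b-1\}\neq\varnothing$ (equivalently, $C(b,D)$ is not uniformly bounded away from $b^{-n}\Z$ by a fixed multiple of $b^{-n}$). Under that hypothesis the estimate does hold --- if, say, $0\in D$, then within each level-$n$ cylinder the $\mu$-proportion lying within $\psi(n)$ of its left endpoint equals $g(b^n\psi(n))$, where $g(\varepsilon):=\mu\bigl(\{y\in C(b,D):y<\varepsilon\}\bigr)$ is non-decreasing, strictly positive for $\varepsilon>0$, and satisfies $g(b^{-1}\varepsilon)=\abs{D}^{-1}g(\varepsilon)$ for small $\varepsilon$, which forces $g(\varepsilon)\asymp\varepsilon^{\gamma}$; summing over the $\abs{D}^{n}$ cylinders gives $\mu\bigl(\Delta_n\cap C(b,D)\bigr)\asymp g(b^n\psi(n))\asymp b^{n\gamma}\psi(n)^{\gamma}$ (the case $b-1\in D$ is symmetric, via right endpoints), after which steps (i)--(ii) go through and complete the proof. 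When $D\cap\{0,b-1\}=\varnothing$, the zero--full law must be reformulated, with $\sum_n f(\psi(n))b^{n\gamma}$ replaced by a series that accounts for the unavoidable gap of order $b^{-n}$ between $C(b,D)$ and $b^{-n}\Z$.
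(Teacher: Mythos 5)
Your proposal correctly concludes that \cref{claim} as stated is false, and for exactly the reason the paper gives: when $0\notin D$ and $b-1\notin D$, every point of $C(b,D)$ is at distance at least $m b^{-n}/(b-1)$ from $b^{-n}\Z$, so $W_b(\psi)\cap C(b,D)$ can be empty while $\sum_n f(\psi(n))b^{n\gamma}$ diverges --- your counterexample $\psi(n)=\tfrac{\delta}{2}b^{-n}$, $f(r)=r^{\gamma}$ is essentially the paper's \cref{exmp} ($b=5$, $D=\{1,2\}$). Your proposed repair (shifting the balls to endpoints of $C_n(b,D)$ and correcting the series by the gap of order $b^{-n}$, then running the measure estimate, quasi-independence and mass transference) is precisely the route the paper takes to its corrected statement \cref{thm:b}.
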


Unfortunately, this claim is not always valid. When the set $D$ does not contain $0$ and $b-1$, all rational numbers of the form $p b^{-n}$ are not in the missing digit set $C(b,D)$, so it is possible to find $\psi$ such that $\sum_{n=1}^{\infty} f(\psi(n)) \times b^{n \gamma}$ diverges while $W_{b}(\psi) \cap C(b,D) = \emptyset$; see Example \ref{exmp} for a concrete example. The correction of Claim \ref{claim} is as follows.

\begin{thm} \label{thm:b}
Let $C(b,D)$ be a missing digit set with Hausdorff dimension $\gamma$, $m = \min\{ \min D, b-1-\max D \},$ $f$ be a dimension function such that $r^{-\gamma} f(r)$ is monotonic, and $\psi : \N \to (0,\infty)$ be a function. Then $\mathcal{H}^{f} (W_{b}(\psi) \cap C(b,D)) =$
\begin{align*}
\begin{cases}
0, &\text{ if } \sum\limits_{n \geq 1 \colon \psi(n) > \frac{m}{(b-1)b^n}} f\left(\psi(n) - \frac{m}{(b-1)b^n}\right)  b^{n \gamma} < \infty, \\
\mathcal{H}^{f}(C(b,D)), &\text{ if } \sum\limits_{n \geq 1 \colon \psi(n) > \frac{m}{(b-1)b^n}}  f\left(\psi(n) - \frac{m}{(b-1)b^n}\right) b^{n \gamma} = \infty.
\end{cases}
\end{align*}
\end{thm}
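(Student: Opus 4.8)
My plan is to reduce the problem to a limsup set of balls (shrinking target) centered at rationals $p/b^n$ that actually lie in $C(b,D)$, and then apply the known machinery — essentially Theorem 1.1 and the mass transference / ubiquity framework — to that cleaned-up set. The key geometric observation is that a rational $p/b^n$ is within distance $\psi(n)$ of some point of $C(b,D)$ if and only if $p/b^n$ is within distance $\tfrac{m}{(b-1)b^n}$ of $C(b,D)$-ish cylinder endpoints; more precisely, the closest point of $C(b,D)$ to any rational in $[kb^{-n},(k+1)b^{-n}]$ is at distance at least $\tfrac{m}{(b-1)b^n}$ (coming from the forbidden initial runs of digits below $\min D$ or above $\max D$), and this bound is attained. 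So effectively the ball $B(p/b^n,\psi(n))$ meets $C(b,D)$ only when $\psi(n) > \tfrac{m}{(b-1)b^n}$, and in that case its intersection with $C(b,D)$ behaves like the intersection of a ball of radius $\psi(n) - \tfrac{m}{(b-1)b^n}$ centered at a genuine point of $C(b,D)$.

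First I would make this reduction precise: show
\[
W_b(\psi)\cap C(b,D) \;=\; \Bigl\{x\in C(b,D) : \bigl|x - \tfrac{p}{b^n}\bigr| < \psi(n) \text{ for i.m.\ } (p,n) \text{ with } \tfrac{p}{b^n}\in J_n\Bigr\}
\]
up to a set that is covered by finitely-many-times the $C(b,D)$-cylinders, where $J_n$ is the (finite) set of rationals $p/b^n$ whose distance to $C(b,D)$ is less than $\psi(n)$; and then show that for such $p/b^n$ the set $\{x\in C(b,D):|x-p/b^n|<\psi(n)\}$ contains and is contained in comparable unions of level-$n$ cylinders, with the effective radius $\psi(n)-\tfrac{m}{(b-1)b^n}$. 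The convergence half of the theorem is then a routine covering argument: each relevant $n$ contributes $\asymp b^{n\gamma}$ cylinders of size $b^{-n}$, but we only need to cover the $\psi(n)$-neighbourhood, so the natural cover has $\mathcal H^f$-cost $\asymp f(\psi(n)-\tfrac{m}{(b-1)b^n})\,b^{n\gamma}$ (here the monotonicity of $r^{-\gamma}f(r)$ is used exactly as in Levesley–Salp–Velani to pass between covering by cylinders and covering by the actual neighbourhoods), and summability gives $\mathcal H^f=0$.

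For the divergence half I would follow the LSV strategy: establish a ubiquity statement for the family of rationals $\{p/b^n : p/b^n \in J_n\}$ with respect to $C(b,D)$ equipped with its natural (Ahlfors $\gamma$-regular) measure $\mu$, namely that $\mu$-almost every $x\in C(b,D)$ is, for infinitely many $n$, within $\asymp b^{-n}$ of some admissible $p/b^n$; this is essentially the statement that admissible rationals are dense in $C(b,D)$ at every scale, which follows from the structure of base-$b$ expansions with digits in $D$ (any cylinder of level $n$ contains an admissible rational $p/b^{n+O(1)}$). Then the general mass transference principle for limsup sets of balls in Ahlfors-regular spaces (as used in LSV, via Beresnevich–Velani) upgrades this to: if $\sum_n f(r_n)\,b^{n\gamma}$ diverges with $r_n = \psi(n)-\tfrac{m}{(b-1)b^n}$ (the effective radii), then $\mathcal H^f$ of the corresponding limsup set equals $\mathcal H^f(C(b,D))$. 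The translation between "balls of radius $\psi(n)$ around $p/b^n$ intersected with $C(b,D)$" and "balls of radius $r_n$ around points of $C(b,D)$" is exactly the content of the geometric lemma above, so the divergence sum in the theorem is the correct one.

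The main obstacle I anticipate is proving the geometric lemma sharply — i.e.\ computing $\operatorname{dist}(p/b^n, C(b,D))$ exactly and showing the extremal value is $\tfrac{m}{(b-1)b^n}$, with $m=\min\{\min D,\ b-1-\max D\}$. One must analyze, for a rational $p/b^n$ sitting at a boundary between two level-$n$ $b$-adic intervals, how far one must travel into $C(b,D)$: on the left side the obstruction is a forbidden run of the smallest digit $\min D$ and on the right a forbidden run of $b-1-\max D$, each contributing a geometric series $\sum_{j>n} (\cdot) b^{-j} = \tfrac{(\cdot)}{(b-1)b^n}$, and one takes the better (smaller) of the two, hence $m$. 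Care is needed with rationals $p/b^n$ that are not in lowest terms and with the interplay between "digit string of $p/b^n$" and "digit strings available in $C(b,D)$"; and one must check the monotonicity hypothesis $r^{-\gamma}f(r)$ (replacing the $r^{-\log 2/\log 3}$ of the original) is what makes the covering/mass-transference steps go through verbatim. Once the lemma is in hand, both halves are essentially bookkeeping on top of Theorem 1.1's proof.
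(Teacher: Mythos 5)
Your plan matches the paper's proof in all essentials: the sharp geometric lemma computing $B(p/b^{n},\psi(n))\cap C(b,D)$ as a recentered ball of radius $\psi(n)-m_{l}/((b-1)b^{n})$ or $\psi(n)-m_{r}/((b-1)b^{n})$ with center in $C(b,D)$ (and empty unless $p/b^{n}$ is an endpoint of a level-$n$ interval) is exactly the paper's \cref{lem:intersection}, the convergence half is the same covering argument, and the divergence half is the same full-measure-plus-mass-transference route, with the paper splitting into left- and right-endpoint families and using that divergence of the $m$-sum forces divergence of at least one of the two one-sided sums. One caution on the step you outsource to ``the known machinery'': the mass transference principle by itself does not upgrade ubiquity at scale $b^{-n}$ to the $\mathcal{H}^{f}$ divergence statement --- you still need a quasi-independence/divergence Borel--Cantelli argument for the recentered, blown-up balls (whose centers $p/b^{n}+d_{l,n}$ are $b^{-n}$-separated because $b-1$ divides $b^{i-j}-1$), and this is precisely what the paper carries out explicitly in \cref{lem:quasi-independence} and \cref{mu_case} before applying \cref{thm:MTP}.
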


It is crucial in \cref{thm:b} to approximate numbers in $C(b,D)$ by rational numbers with denominators $b^n$, as this matches the structure of $C(b,D)$. One naturally wonders if similar zero-full laws hold when denominators are powers of other numbers. Let $t \geq 2$ be an integer, we consider the Hausdorff measure of $W_{t}(\psi) \cap C(b,D)$, where 
$$W_{t} (\psi) = \left\{x \in [0,1] \colon \abs{x - \frac{p}{t^n}} < \psi(n) \text{ for i.m. } (p,n) \in \N^2\right\}.$$
For $t=2$, Velani proposed the following conjecture.

\begin{conj}[Velani's conjecture; see \cite{ACY20}] \label{conj:Velani}
Suppose $\psi: \N \to (0,\infty)$ is monotonic, then
\begin{align*}
\mathcal{H}^{\log2 / \log 3} (W_{2}(\psi) \cap C) = \begin{cases}
0, &\text{ if } \sum_{n=1}^{\infty} \psi(n) \times 2^n < \infty, \\
1, &\text{ if } \sum_{n=1}^{\infty} \psi(n) \times 2^n = \infty.
\end{cases}
\end{align*}
\end{conj}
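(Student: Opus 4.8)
The plan is to follow the classical two-part strategy behind zero--full laws (as in \cref{thm:LSV}), adapted to the fact that the approximating balls now live at the dyadic scales $2^{-n}$ while the pieces of $C$ live at the triadic scales $3^{-k}$; these scales are incommensurable, and that mismatch is the whole difficulty. Write $s:=\log 2/\log 3=\dim_{H}C$, and recall $\mathcal{H}^{s}(C)=1$, so the statement is a genuine dichotomy at the critical exponent. For the \emph{convergence} half, assume $\sum_{n}\psi(n)2^{n}<\infty$; then $\psi(n)<2^{-n}$ for all large $n$, and I would cover $W_{2}(\psi)\cap C$ by $\bigcup_{n\ge N}\{\,C\cap B(p/2^{n},\psi(n)):\operatorname{dist}(p/2^{n},C)<\psi(n)\,\}$, covering each piece in turn by the $O(1)$ triadic cylinders of $C$ at scale $3^{-k_{n}}\asymp\psi(n)$ that meet it. The resulting $\mathcal{H}^{s}$-sum is $\asymp\sum_{n\ge N}N_{n}\,\psi(n)^{s}$, where $N_{n}:=\#\{p:\operatorname{dist}(p/2^{n},C)<\psi(n)\}$. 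For the \emph{divergence} half, assume $\sum_{n}\psi(n)2^{n}=\infty$ with $\psi$ monotonic, and show that the balls $B(p/2^{n},\psi(n))$ form a locally ubiquitous system for $C$ with respect to $\mathcal{H}^{s}|_{C}$ --- at each scale they should cover a fixed proportion of the $\mathcal{H}^{s}$-mass inside every triadic cylinder of $C$, with enough quasi-independence across scales --- and then feed this into a mass transference / ubiquity theorem for Hausdorff measures in the spirit of Beresnevich--Velani to pass from the full-measure statement for the blown-up balls $B(p/2^{n},2^{-n})$ to $\mathcal{H}^{s}\big(\limsup_{n}B(p/2^{n},\psi(n))\big)=\mathcal{H}^{s}(C)$.

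Both halves rest on one and the same combinatorial input: a sharp count of $N_{n}$ and, for the divergence half, of how the admissible $p$ spread among the $\asymp\psi(n)^{-s}$ triadic cylinders of $C$ at scale $\asymp\psi(n)$. The heuristic value --- the one that reproduces precisely Velani's dichotomy --- is $N_{n}\asymp 2^{n}\psi(n)^{1-s}$: there are $\asymp 2^{k_{n}s}\asymp\psi(n)^{-s}$ such cylinders, each of length $\ll 2^{-n}$, and a ``typical'' one should meet the grid $2^{-n}\Z$ with ``probability'' $\asymp 2^{n}\psi(n)$. Granting this, the covering sum becomes $\asymp\sum_{n}2^{n}\psi(n)$, so $\sum_{n}\psi(n)2^{n}<\infty$ forces $\mathcal{H}^{s}(W_{2}(\psi)\cap C)=0$; and the matching lower bounds, together with quasi-independence, drive the ubiquity argument to full measure when the sum diverges.

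The obstacle --- and the reason this is stated as a conjecture --- is making that count rigorous \emph{uniformly in $n$}. Soft arguments (Ahlfors $s$-regularity of $C$, i.e.\ Minkowski-content bounds for the $\psi(n)$-neighbourhood of $C$) give only $N_{n}\lesssim\psi(n)^{-s}$, which misses the crucial factor $2^{n}\psi(n)$ and makes \emph{every} term of the covering sum $\gtrsim 1$, so it cannot detect convergence at all. Doing better means proving that the triadic cylinders of $C$ are quantitatively equidistributed modulo the dyadic grid $2^{-n}\Z$ --- equivalently, controlling the distribution modulo $1$ of $\{2^{n}c\}$ as $c$ ranges over the $k$-digit rationals whose numerators have base-$3$ digits in $\{0,2\}$, equivalently the behaviour of $2^{n}\bmod 3^{k}$ against the Cantor digit set --- with usable error terms for \emph{all} pairs $(n,k)$, in particular when $\psi$ decays so fast that $k\gg n$ and one is really asking about the base-$3$ digits of $2^{n}$. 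This is a hard Diophantine problem, intimately tied to the multiplicative independence of $2$ and $3$ and the circle of $\times 2,\ \times 3$ questions; partial results of this type underlie the progress recorded in \cite{ACY20}, and it is precisely the full uniform version that remains out of reach.
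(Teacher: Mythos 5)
This statement is Velani's conjecture, and the paper does not prove it --- it is stated as \cref{conj:Velani} precisely because it is open, with the paper only citing partial progress and noting that the main difficulty is that very little is known about how dyadic rationals are distributed around the middle-third Cantor set (a problem the authors compare to Furstenberg's $\times 2$, $\times 3$ conjecture). So there is no proof in the paper to compare yours against, and your submission is, by your own account, not a proof either: everything hinges on the counting estimate $N_{n}\asymp 2^{n}\psi(n)^{1-s}$ for the number of dyadic rationals $p/2^{n}$ lying within $\psi(n)$ of $C$, and you correctly flag that this equidistribution of triadic Cantor cylinders against the grid $2^{-n}\Z$ is exactly the unproven Diophantine input. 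Without it, the convergence half degenerates (the soft bound $N_{n}\lesssim\psi(n)^{-s}$ makes every term of the covering sum of order $1$) and the divergence half has no ubiquity/quasi-independence input to feed into mass transference. That is a genuine gap --- indeed it is \emph{the} gap --- and no amount of rearranging the two-part strategy closes it.

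That said, your diagnosis is accurate and consistent with the paper's own framing. The strategy you outline (cover at scale $\psi(n)$ for convergence; local ubiquity plus mass transference for divergence) is exactly the template the paper successfully executes in the tractable cases, namely \cref{thm:b}, \cref{thm:dependent} and \cref{thm:main}, where $b$ and $t$ are multiplicatively dependent or share prime divisors, so that balls centred at $p/t^{n}$ can be re-centred at actual points of $C(b,D)$ and counted exactly via the self-similar structure. Your heuristic count $N_{n}\asymp 2^{n}\psi(n)^{1-s}$ is also the same heuristic the paper uses in Section 6 to formulate its Conjecture 6.3 for the multiplicatively independent case (the series $\sum_{n} f(\psi(n))\,t^{n}\psi(n)^{1-\gamma}$). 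So you should present this as a well-motivated reduction of Velani's conjecture to a uniform equidistribution statement, not as a proof.
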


Some progresses towards proving Conjecture \ref{conj:Velani} have been made by various authors; see \cite{ABCY22,ACY20,Baker22}. The main difficultly is that we know very little on how the dyadic rationals are distributed around the middle-third Cantor set, this is similar to the Furstenberg's conjecture on times two and times three \cite{Furstenberg70}.

We are going to consider the case that $b$ and $t$ are multiplicatively dependent, that is, $\log t / \log b \in \Q$. Under this assumption, we prove a zero-full law.

\begin{thm} \label{thm:dependent}
Let $C(b,D)$ be a missing digit set with Hausdorff dimension $\gamma$ such that $D$ contains at least one of $0$ and $b-1$, $t \geq 2$ be an integer which is multiplicatively dependent with $b$, $f$ be a dimension function such that $r^{-\gamma} f(r)$ is monotonic, and $\psi : \N \to (0,\infty)$ be a function. Then
\begin{align*}
\mathcal{H}^{f} (W_{t}(\psi) \cap C(b,D)) = \begin{cases}
0, &\text{ if } \sum_{n=1}^{\infty} f(\psi(n))  t^{n \gamma} < \infty, \\
\mathcal{H}^{f}(C(b,D)), &\text{ if } \sum_{n=1}^{\infty}  f(\psi(n)) t^{n \gamma} = \infty.
\end{cases}
\end{align*}
\end{thm}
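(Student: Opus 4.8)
The plan is to treat the two regimes by different methods: the convergence case by a direct covering estimate, the divergence case by the ubiquity/large-intersection machinery already underlying \cref{thm:LSV} and \cref{thm:b}. Throughout we may assume $\mathcal{H}^f(C(b,D))>0$, since otherwise both sides of the asserted identity vanish; as $C(b,D)$ is Ahlfors $\gamma$-regular this forces $\liminf_{r\to0}r^{-\gamma}f(r)>0$, which together with the monotonicity of $r^{-\gamma}f(r)$ makes $f$ doubling and gives $t^{n\gamma}f(t^{-n})\gtrsim1$ for all $n$. We may also assume $\psi(n)\le t^{-n}$ for all $n$: if $\psi(n)>t^{-n}$ for infinitely many $n$ then $\bigcup_p B(p/t^n,\psi(n))=[0,1]$ for those $n$, so $W_t(\psi)=[0,1]$, and since $\sum_n f(\psi(n))t^{n\gamma}=\infty$ in that case the identity holds trivially; otherwise we may lower finitely many values of $\psi$ without changing $W_t(\psi)$ or the convergence of the series. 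Granting these reductions, the convergence case follows by covering $W_t(\psi)\cap C(b,D)\subset\bigcup_{n\ge N}\bigcup_p\bigl(B(p/t^n,\psi(n))\cap C(b,D)\bigr)$ and using that the number of $p$ with $B(p/t^n,\psi(n))\cap C(b,D)\neq\emptyset$ is $\lesssim t^{n\gamma}$ (the covering number of $C(b,D)$ at scale $t^{-n}\ge\psi(n)$), while each such set meets only boundedly many base-$b$ cylinders of diameter $\asymp\psi(n)$ and hence has $\mathcal{H}^f$-content $\lesssim f(\psi(n))$; this yields $\mathcal{H}^f\bigl(W_t(\psi)\cap C(b,D)\bigr)\lesssim\sum_{n\ge N}t^{n\gamma}f(\psi(n))\to0$. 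This half uses neither multiplicative dependence nor the hypothesis on $D$.

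For the divergence case, write $t=c^{u}$ and $b=c^{v}$ with $c\ge2$ an integer and $u,v\ge1$; this is possible precisely because $t$ and $b$ are multiplicatively dependent. The decisive point is that $t$ and $b$ then have the same prime divisors, so the rationals with denominator a power of $t$ and those with denominator a power of $b$ form the same set (the $c$-adic rationals). Assuming $0\in D$ (the case $b-1\in D$ is symmetric, via right endpoints), put $L_n:=\lfloor nu/v\rfloor$; then the left endpoint $\ell_I$ of any level-$L_n$ cylinder $I$ of $C(b,D)$ lies in $C(b,D)$ (pad with zeros), is a rational with denominator dividing $t^n$ (since $vL_n\le nu$), and these endpoints are $\asymp b^{-L_n}\asymp t^{-n}$-dense in $C(b,D)$, with $\asymp t^{n\gamma}$ of them. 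In other words, \emph{for each $n$ the rationals $p/t^n$ lying in $C(b,D)$ form a net of $C(b,D)$ of mesh $\asymp t^{-n}$.} This is the only place the hypotheses on $t,b$ and $D$ enter, and it is exactly what is unavailable in, e.g., Velani's conjecture.

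With the net property I would feed the resonant points $\{\,p/t^n:p/t^n\in C(b,D)\,\}$, organised by level $n$, into the local-ubiquity/large-intersection theorem that underlies \cref{thm:b}: by the previous paragraph this system is locally ubiquitous in $\bigl(C(b,D),\mathcal{H}^\gamma\bigr)$ with ubiquity function $\rho(n)\asymp t^{-n}$, its $\rho(n)$-balls covering $C(b,D)$ with bounded overlap. Since $\psi(n)\le t^{-n}\asymp\rho(n)$ and $\sum_n f(\psi(n))\rho(n)^{-\gamma}\asymp\sum_n f(\psi(n))t^{n\gamma}=\infty$, that theorem gives full $\mathcal{H}^f$-measure to the $\limsup$ set formed from these resonant points, and that set is contained in $W_t(\psi)$; hence $\mathcal{H}^f\bigl(W_t(\psi)\cap C(b,D)\bigr)=\mathcal{H}^f(C(b,D))$. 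Up to a bounded factor and the reindexing $m=\lfloor n\log t/\log b\rfloor$ this is the construction used when $t=b$, now carried out at the variable cylinder level $m$ instead of at level $n$. I expect the whole difficulty to sit here: the net property itself is immediate once one records that $t$ and $b$ share their prime divisors, but care is needed to run the ubiquity argument at the $n$-dependent scale $m\asymp n\log t/\log b$, to verify bounded overlap of the resonant points of $C(b,D)$ at that scale, and to check that bundling the indices $n$ with a common value of $\lfloor nu/v\rfloor$ (and taking the corresponding maximum of $\psi$) alters the relevant series only by a bounded factor, so that the divergence of $\sum_n f(\psi(n))t^{n\gamma}$ is precisely the hypothesis consumed by the ubiquity theorem. (If $0,b-1\notin D$ the net property fails, since then no $c$-adic rational lies in $C(b,D)$; this is why that hypothesis is imposed, and a correction term in the spirit of \cref{thm:b} would be needed in general.)
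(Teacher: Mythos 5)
Your proposal is correct and follows essentially the same route as the paper: the divisibility $b^{\lfloor n\log t/\log b\rfloor}\mid t^{n}$ reduces everything to $b$-adic approximation along the index sequence $\lfloor n\log t/\log b\rfloor$ (this is how the paper derives \cref{thm:dependent} from \cref{thm:main} via \cref{thm:A}), with a direct covering estimate for convergence and, for divergence, a counting/quasi-independence argument at the $\gamma$-level followed by the mass transference principle. The technical points you flag but do not execute --- bounded overlap, deduplication of resonant points that recur at several levels (the paper's sets $L_i^{\ast}$), and the bounded-factor comparison of the series after bundling indices with a common value of $\lfloor n\log t/\log b\rfloor$ --- are precisely where the paper's work lies, and they all go through.
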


Unlike \cref{thm:b}, here we need the assumption that $D$ contains at least one of $0$ and $b-1$ to obtain a complete zero-full law. If this condition is dropped, we are still able to deduce a result for $\mathcal{H}^{f} (W_{t}(\psi) \cap C(b,D))$, but the two series for the divergence and convergence parts may be different. Indeed, our method is applicable to more general approximable sets in the case that $b$ and $t$ have the same prime divisors, which is weaker than that $p$ and $t$ are multiplicatively dependent.

Suppose $A=(a_{n})_{n \geq 1}$ is a sequence of positive integers and define
\begin{align*}
W_{t,A}(\psi) =  \left\{x \in [0,1] \colon \abs{x - \frac{p}{t^{a_{n}}}} < \psi(n) \text{ for i.m. } (p,n) \in \N^2\right\}.
\end{align*}
Let 
\begin{align*}
I(A) = \{i \in \N \colon a_{n} = i \text{ for some } n\}.
\end{align*}
That is, $I(A)$ is the set of elements of the sequence $A$. For $i\in I(A)$, let
\begin{align*}
\psi_{A}(i) = \max \left\{\psi(n) \colon a_{n}=i \right\}.
\end{align*}
If the prime divisors of $b$ and $t$ are the same, denote 
\begin{align*}
\alpha_{1}(b,t) = \min\left\{\frac{v_{q}(t)}{v_{q}(b)} \colon q \text{ is a prime divisor of } b \right\},\\
\alpha_{2}(b,t) = \max\left\{\frac{v_{q}(t)}{v_{q}(b)} \colon q \text{ is a prime divisor of } b  \right\},
\end{align*}
where $v_{q}(b)$ means the greatest integer such that $q^{v_{q}(b)}$ divides $b$. For the sake of clarity, we will simply write $I(A)$, $\alpha_{1}(b,t)$ and $\alpha_{2}(b,t)$ as $I$, $\alpha_{1}$ and $\alpha_{2}$ respectively when there is no confusion. We use the notations $\lfloor \cdot \rfloor$ and $\lceil \cdot \rceil$ to mean the floor and ceiling functions respectively.

\begin{thm} \label{thm:main}
Suppose $C(b,D)$ is a missing digit set with Hausdorff dimension $\gamma$, $m$ is the greatest integer such that $$D \subset \{m, m+1, \ldots, b-1-m\},$$ $t \geq 2$ is an integer that has the same prime divisors as $b$, and $A=(a_{n})_{n \geq 1}$ is an unbounded non-decreasing sequence of positive integers. Let $f$ be a dimension function such that $r^{-\gamma} f(r)$ is monotonic, and $\psi : \N \to (0,\infty)$ be a function. Then $\mathcal{H}^{f} (W_{t,A}(\psi) \cap C(b,D)) = $
\begin{align*}
\begin{cases}
0 \hspace{-5.0pt }&\text{ if } \sum\limits_{i \in I \colon \psi_{A}(i) > \frac{m}{(b-1) b^{\lceil i \alpha_{2} \rceil }}} f\left(\psi_{A}(i) - \frac{m}{(b-1) b^{\lceil i \alpha_{2} \rceil }}\right) b^{i \alpha_{2} \gamma } < \infty, \\
\mathcal{H}^{f}(C(b,D)) \hspace{-5.0pt }&\text{ if } \sum\limits_{i \in I \colon \psi_{A}(i) > \frac{m}{(b-1) b^{\lfloor i \alpha_{1} \rfloor }}} f\left(\psi_{A}(i) - \frac{m}{(b-1) b^{\lfloor i \alpha_{1} \rfloor }}\right) b^{i \alpha_{1} \gamma} = \infty.
\end{cases}
\end{align*}
\end{thm}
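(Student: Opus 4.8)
The idea is to reduce \cref{thm:main} to \cref{thm:b} via a divisibility sandwich. Since $t$ and $b$ have the same prime divisors, for every positive integer $i$ one checks from the definitions of $\alpha_{1},\alpha_{2}$ that $t^{i}\mid b^{k}$ exactly when $k\ge i\alpha_{2}$, and $b^{k}\mid t^{i}$ exactly when $k\le i\alpha_{1}$; hence $b^{\lfloor i\alpha_{1}\rfloor}\mid t^{i}\mid b^{\lceil i\alpha_{2}\rceil}$. For a non-decreasing unbounded sequence $B=(b_{n})$ of positive integers, write $W_{b,B}(\psi)$ for the set defined like $W_{t,A}(\psi)$ but with $t^{a_{n}}$ replaced by $b^{b_{n}}$. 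Rewriting each $p/b^{\lfloor a_{n}\alpha_{1}\rfloor}$ as $p'/t^{a_{n}}$ and each $p/t^{a_{n}}$ as $p''/b^{\lceil a_{n}\alpha_{2}\rceil}$ — the numerator maps being injective for fixed $n$, so ``infinitely many'' is preserved — yields
\begin{align*}
W_{b,\,\lfloor A\alpha_{1}\rfloor}(\psi)\;\subseteq\;W_{t,A}(\psi)\;\subseteq\;W_{b,\,\lceil A\alpha_{2}\rceil}(\psi),
\end{align*}
where $\lceil A\alpha_{2}\rceil=(\lceil a_{n}\alpha_{2}\rceil)_{n}$ and $\lfloor A\alpha_{1}\rfloor=(\lfloor a_{n}\alpha_{1}\rfloor)_{n}$ are again non-decreasing, the latter unbounded since $\alpha_{1}>0$. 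Thus it suffices to prove, for non-decreasing unbounded $B$, that $\mathcal{H}^{f}(W_{b,B}(\psi)\cap C(b,D))$ equals $0$ when the series $\sum_{j\in I(B)\colon\psi_{B}(j)>\frac{m}{(b-1)b^{j}}}f\bigl(\psi_{B}(j)-\frac{m}{(b-1)b^{j}}\bigr)b^{j\gamma}$ converges and equals $\mathcal{H}^{f}(C(b,D))$ when it diverges, and then to match the two series of \cref{thm:main} — in which $b^{i\alpha_{2}\gamma}$ and $b^{i\alpha_{1}\gamma}$ may be replaced by $b^{\lceil i\alpha_{2}\rceil\gamma}$ and $b^{\lfloor i\alpha_{1}\rfloor\gamma}$, since these differ by bounded factors — against this one for $B=\lceil A\alpha_{2}\rceil$ and $B=\lfloor A\alpha_{1}\rfloor$ respectively.

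The series comparison goes as follows. When $B=\lceil A\alpha_{2}\rceil$ one has $\psi_{B}(j)=\max\{\psi_{A}(i):i\in I(A),\ \lceil i\alpha_{2}\rceil=j\}$, and picking for each relevant $j$ an $i$ attaining this maximum gives an injection $j\mapsto i$ bounding the base-$b$ convergence series above by the convergence series of \cref{thm:main}. When $B=\lfloor A\alpha_{1}\rfloor$, each $j$ equals $\lfloor i\alpha_{1}\rfloor$ for at most $\lfloor1/\alpha_{1}\rfloor+1$ integers $i$, so regrouping the divergence series of \cref{thm:main} by the value $j=\lfloor i\alpha_{1}\rfloor$ and using that $f$ is non-decreasing together with $\psi_{A}(i)\le\psi_{B}(j)$ bounds it above by $(\lfloor1/\alpha_{1}\rfloor+1)$ times the base-$b$ divergence series; hence divergence is preserved. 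The restrictions $\psi_{B}(j)>\frac{m}{(b-1)b^{j}}$ are exactly what keeps the arguments of $f$ positive, and they reflect the elementary lemma that $\abs{x-p/b^{n}}\ge\frac{m}{(b-1)b^{n}}$ for all $x\in C(b,D)$, $p\in\Z$, $n\ge1$, which follows by comparing base-$b$ expansions using $D\subseteq\{m,\dots,b-1-m\}$.

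To get the base-$b$ statement from \cref{thm:b} (its case $B=(1,2,3,\dots)$), I would pad $\psi_{B}$ to $\hat\psi\colon\N\to(0,\infty)$ with $\hat\psi(n)=\psi_{B}(n)$ for $n\in I(B)$, and for $n\notin I(B)$ set $\hat\psi(n)=\frac{m}{(b-1)b^{n}}$ when $m\ge1$ and otherwise take $\hat\psi(n)>0$ so small that $b^{n}f(2\hat\psi(n))\le2^{-n}$ (possible since $f(r)\to0$ as $r\to0$). A non-decreasing unbounded sequence attains each value finitely often, so infinitely many approximating pairs force infinitely many distinct values $b_{n}$; hence $W_{b,B}(\psi)\subseteq W_{b}(\hat\psi)$, and in the convergence case the series hypothesis of \cref{thm:b} for $\hat\psi$ holds (its $n\in I(B)$ part is the bounded series above, its $n\notin I(B)$ part is empty when $m\ge1$ and summable by the choice of $\hat\psi$ when $m=0$), whence $\mathcal{H}^{f}(W_{b,B}(\psi)\cap C(b,D))=0$. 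In the divergence case one has $W_{b}(\hat\psi)\cap C(b,D)\subseteq\bigl(W_{b,B}(\psi)\cap C(b,D)\bigr)\cup Z$, where $Z$ is the $\limsup$ over $n\notin I(B)$ of the balls $B(p/b^{n},\hat\psi(n))$; by the elementary lemma $Z\cap C(b,D)=\emptyset$ when $m\ge1$, while $\mathcal{H}^{f}(Z)=0$ when $m=0$ by covering each of these balls by itself and invoking the choice of $\hat\psi$. Since the divergence hypothesis of \cref{thm:b} for $\hat\psi$ still holds and $\mathcal{H}^{f}$ is an outer measure, $\mathcal{H}^{f}(W_{b,B}(\psi)\cap C(b,D))\ge\mathcal{H}^{f}(W_{b}(\hat\psi)\cap C(b,D))-\mathcal{H}^{f}(Z)=\mathcal{H}^{f}(C(b,D))$, and the reverse inequality is trivial; combined with the sandwich, \cref{thm:main} follows.

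I expect the part needing most care to be the divergence half of the padding: one must confirm that the auxiliary $\limsup$ set $Z$ is genuinely $\mathcal{H}^{f}$-null for an arbitrary dimension function $f$ (which is what forces the crude off-$I(B)$ choice of $\hat\psi$), and that the various ``infinitely many distinct $b_{n}$'' reductions are legitimate, so that the $\limsup$ sets defining $W_{b,B}(\psi)$ and the padded $W_{b}(\hat\psi)$ really agree up to null sets. Finally, if instead \cref{thm:b} is to be derived \emph{from} the base-$b$ statement rather than the other way round, then the latter is where the genuine work lies: its convergence half is a covering argument — a level-$j$ cylinder of $C(b,D)$ contributes to $W_{b,B}(\psi)$ only a sub-cylinder of diameter comparable to $\psi_{B}(j)-\frac{m}{(b-1)b^{j}}$, and the series is the resulting $\mathcal{H}^{f}$-cover sum — while its divergence half exhibits $W_{b,B}(\psi)\cap C(b,D)$ as a $\limsup$ of sub-cylinders and invokes the mass transference principle (or ubiquity machinery) on the Ahlfors $\gamma$-regular set $C(b,D)$, the monotonicity of $r^{-\gamma}f(r)$ being precisely what absorbs the bounded distortion between the radius $\psi_{B}(j)-\frac{m}{(b-1)b^{j}}$ and the diameter of the captured sub-cylinder.
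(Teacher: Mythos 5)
Your outer scaffolding coincides with the paper's: the same divisibility sandwich $b^{\lfloor a_n\alpha_1\rfloor}\mid t^{a_n}\mid b^{\lceil a_n\alpha_2\rceil}$ giving $W_{b,\lfloor A\alpha_1\rfloor}(\psi)\subset W_{t,A}(\psi)\subset W_{b,\lceil A\alpha_2\rceil}(\psi)$, and the same regrouping of the two series by the value of $\lfloor i\alpha_1\rfloor$ (resp.\ $\lceil i\alpha_2\rceil$), using boundedness of the fibres and monotonicity of $f$, to match them against the base-$b$ series. Where you diverge is in how the base-$b$ statement for a general unbounded non-decreasing sequence $B$ is obtained. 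The paper proves it directly (its \cref{thm:A}) and then deduces \cref{thm:b} as the special case $A=(n)_{n\ge1}$; you instead propose to deduce the general-sequence statement \emph{from} \cref{thm:b} by padding $\psi_B$ with negligible values off $I(B)$. That padding argument is a genuinely nice reduction and, as far as I can check, sound: the containments $W_{b,B}(\psi)\subset W_b(\hat\psi)\subset W_{b,B}(\psi)\cup Z$ hold because a non-decreasing unbounded sequence attains each value finitely often, $Z\cap C(b,D)=\emptyset$ when $m\ge1$ by the separation of $C(b,D)$ from the points $p/b^n$, and $\mathcal{H}^f(Z)=0$ when $m=0$ by your choice of $\hat\psi$. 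It shows the two statements are essentially equivalent.

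The caveat is that, relative to this paper, your primary route is circular: \cref{thm:b} is not proved independently here but is itself derived from the general-sequence lemma, so the analytic content of the theorem still has to be supplied by a direct proof of the base-$b$ statement. You acknowledge this in your closing paragraph, but your sketch of that direct proof understates the hard step. For the divergence half, the mass transference principle (\cref{thm:MTP}) only converts a full-$\mathcal{H}^\gamma$-measure statement into a full-$\mathcal{H}^f$-measure statement; the substantive work is establishing that full-measure input, which in the paper requires recentering the balls at genuine points of $C(b,D)$ via the endpoint shifts $d_{l,i},d_{r,i}$ (splitting into the left- and right-endpoint families $LW^\ast$ and $RW^\ast$), proving pairwise quasi-independence of the sets $LS_i^\ast(B)$ (\cref{lem:quasi-independence}), and then applying the divergence Borel--Cantelli inequality together with \cref{lem:full_measure} (\cref{mu_case}). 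Your sketch jumps from ``exhibit a limsup of sub-cylinders'' to ``invoke MTP'' without this quasi-independence/full-measure step, which is where the actual difficulty of the theorem lies. If you either cite \cref{thm:b} as an externally established result or fill in that step, your argument is complete.
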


\begin{remarks}
{\rm (i)} When $b$ and $t$ are multiplicatively dependent, we have $$\alpha_{1} = \alpha_{2} = \frac{\log t}{\log b}$$ and thus \cref{thm:dependent} follows from \cref{thm:main} by taking $A = (n)_{n \geq 1}$ and using $m=0$. 

{\rm (ii)} If $b=t$ and $A = (n)_{n \geq 1}$, then $\alpha_{1} = \alpha_{2} = 1$ and hence $\lceil n \alpha_{2} \rceil = \lfloor n \alpha_{1} \rfloor = n$. Therefore \cref{thm:main} also implies \cref{thm:b}.

{\rm (iii)} In general, the two series in \cref{thm:main} are different, so our formula is inconclusive in the case that the first series diverges while the second series converges.
\end{remarks}

The rest of this article is structured as follows. \cref{sec:Pre} includes terminologies and tools needed. The main reason that Claim \ref{claim} is wrong and what modification is needed are discussed in \cref{sec:inter}. We prove a variation of \cref{thm:main} in \cref{sec:A} and then use it to obtain \cref{thm:main} in \cref{sec:dependent}. Finally, we discuss Hausdorff dimension and the large intersection property of $W_{t}(\psi) \cap C(b,D)$ and propose a conjecture for multiplicatively independent case in \cref{sec:further}.

\section{Preliminaries} \label{sec:Pre}
\subsection{Hausdorff measure and dimension}
A function $f:(0,\infty) \to (0,\infty)$ is called a \textit{dimension function} if it is continuous, non-decreasing and $\lim_{r \to 0} f(r) = 0$. For a set $S \subset \R^k$, we say a countable collection of balls $\{B_{i}\}$ in $\R^{k}$ is a \textit{$\rho$-cover} of $S$ if $S \subset \cup_{i} B_{i}$ and their radii are not larger than $\rho$. The \textit{Hausdorff $f$-measure} $\mathcal{H}^{f}$ of $S$ is 
$$\mathcal{H}^{f}(S) = \lim_{\rho \to 0} \mathcal{H}^{f}_{\rho} (S),$$
where
$$\mathcal{H}^{f}_{\rho} (S) = \inf\left\{ \sum_{i} f(r(B_{i})) \colon \{B_{i}\} \text{ is a $\rho$-cover of } S \right\},$$
and $r(B_{i})$ means the radius of ball $B_{i}$.

When $f(r) = r^{s}$ for some $s \geq 0$, we write $\mathcal{H}^{f}$ as $\mathcal{H}^{s}$. The \textit{Hausdorff dimension} of a set $S$ is
$$\dim_{\rm H} S = \inf\{s \colon \mathcal{H}^{s}(S) = 0\}.$$
It is known that a missing digit set $C(b,D)$ has Hausdorff dimension $\log \# D / \log b$, where $\# D$ denotes the cardinality of $D$. This number will be used frequently and we denote it by $\gamma$. More properties of Hausdorff measure and dimension can be found in \cite{Falconer90}.

\subsection{Mass transference principle}
For two positive numbers $x$ and $y$, we write $x \ll y$ if there exists a constant $K>0$ such that $x \leq K y$. The relation $x \gg y$ is defined similarly and we write $x \asymp y$ if $x \ll y$ and $x \gg y$.

Let $X$ be a compact subset of $\R^{k}$ and $\mu$ be a Borel measure on $X$. We say $\mu$ is \textit{$\delta$-Ahlfors regular} if there exists constant $r_{0}>0$ such that for any ball $B(x,r) \subset X$ with $x \in X$ and radius $r \leq r_{0}$, we have $$\mu(B(x,r)) \asymp r^{\delta}.$$  When $X$ is a missing digit set $C(b,D)$ with dimension $\gamma$, the measure $\mathcal{H}^{\gamma}\vert_{C(b,D)}$ is $\gamma$-Ahlfors regular; see for example \cite{Mattila95}. This allows us to use the mass transference principle, a widely-used tool in computing Hausdorff dimension of limsup sets.

\begin{thm}[Mass transference principle, \cite{BV06}] \label{thm:MTP}
Let $X$ be a compact subset of $\R^{k}$ equipped with a $\delta$-Ahlfors regular measure $\mu$. Let $(B_{n})_{n \geq 1}$ be a sequence of balls in $X$ with $r(B_n) \to 0$ as $n \to \infty$. Suppose $f$ is a dimension function such that $r^{-\delta} f(r)$ is monotonic. For a ball $B=B(x,r)$, denote $B^{f} = B(x, f(r)^{1/\delta})$. 
If for any ball $B$ in $X$, we have 
$$ \mathcal{H}^{\delta} (B \cap \limsup_{n \to \infty} B_{n}^{f}) = \mathcal{H}^{\delta} (B),$$
then
$$ \mathcal{H}^{f} (B \cap \limsup_{n \to \infty} B_{n}) = \mathcal{H}^{f} (B)$$
for any ball $B$ in $X$.
\end{thm}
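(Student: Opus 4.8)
The statement is the Beresnevich--Velani mass transference principle, which I would prove from measure-theoretic first principles, using only the $\delta$-Ahlfors regularity of $\mu$ together with standard covering and density theorems in $\R^{k}$. Write $G = \limsup_{n} B_{n}$ and $G^{f} = \limsup_{n} B_{n}^{f}$; note that in the nontrivial regime $r^{-\delta} f(r) \to \infty$ one has $B_{n} \subset B_{n}^{f}$, so $G \subset G^{f}$ and the hypothesis $\mathcal{H}^{\delta}(B \cap G^{f}) = \mathcal{H}^{\delta}(B)$ carries genuine information. Since $G \cap B \subset B$ gives the upper bound for free, only the lower bound $\mathcal{H}^{f}(B \cap G) \geq \mathcal{H}^{f}(B)$ needs proof. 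The first move is to reduce this to a \emph{uniform local} estimate: it suffices to produce a constant $\kappa \in (0,1]$, independent of $B$, with $\mathcal{H}^{f}(B \cap G) \geq \kappa\, \mathcal{H}^{f}(B)$ for every ball $B$. Granting this and assuming $\mathcal{H}^{f}(B_{0}) < \infty$ (the case $\mathcal{H}^{f}(B_{0}) = \infty$ is obtained by exhausting $G$ with Cantor subsets of arbitrarily large $\mathcal{H}^{f}$-mass), the restriction $\nu = \mathcal{H}^{f}\vert_{\overline{B_{0}}}$ is a finite Borel, hence Radon, measure on a compact subset of $\R^{k}$, and $G$ is Borel. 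If $\mathcal{H}^{f}(B_{0} \setminus G) > 0$, the Lebesgue density theorem for $\nu$ (via the Besicovitch covering theorem) supplies a $\nu$-density point $x$ of $B_{0} \setminus G$, at which $\mathcal{H}^{f}(G \cap B(x,r)) = o(\mathcal{H}^{f}(B(x,r)))$, contradicting the uniform lower bound. Hence $\mathcal{H}^{f}(B_{0} \setminus G) = 0$, which is the theorem.

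To obtain the local estimate, fix a ball $B$ and build a Cantor set $\mathcal{K} \subset B \cap G$ as a nested intersection $\mathcal{K} = \bigcap_{k} \bigcup_{L \in \mathcal{L}_{k}} L$. Set $\mathcal{L}_{0} = \{B\}$. Given a level-$k$ ball $L$, I would use the hypothesis $\mathcal{H}^{\delta}(\frac{1}{2}L \cap G^{f}) = \mathcal{H}^{\delta}(\frac{1}{2}L)$ to select, inside a concentric shrunken copy $\frac{1}{2}L$, a finite pairwise-disjoint family of enlarged balls $B_{n}^{f}$ whose union carries at least half of $\mathcal{H}^{\delta}(\frac{1}{2}L)$; this is possible because the balls $B_{n}^{f} \subset \frac{1}{2}L$ with $r(B_{n}^{f}) \to 0$ form a Vitali cover of a full-$\mathcal{H}^{\delta}$-measure subset, and a Vitali/$5r$ extraction yields the required finite disjoint subfamily. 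The children of $L$ are declared to be the corresponding \emph{original} balls $B_{n}$: each satisfies $B_{n} \subset B_{n}^{f} \subset \frac{1}{2}L$, so the children are automatically disjoint and interior to $L$. Choosing the indices $n$ strictly increasing from level to level and using $r(B_{n}^{f}) = f(r(B_{n}))^{1/\delta} \to 0$ guarantees the level radii tend to $0$ and that every point of $\mathcal{K}$ lies in infinitely many $B_{n}$, so that $\mathcal{K} \subset \limsup_{n} B_{n} = G$.

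Next I would place a measure $\mu$ of total mass $f(r(B))$ on $\mathcal{K}$ by distributing mass down the tree: a child $B_{n}$ of $L$ receives the fraction $\mathcal{H}^{\delta}(B_{n}^{f}) / \sum_{n'} \mathcal{H}^{\delta}(B_{n'}^{f})$ of $\mu(L)$. The decisive ingredient is the defining identity $r(B_{n}^{f})^{\delta} = f(r(B_{n}))$ together with Ahlfors regularity $\mathcal{H}^{\delta}(B_{n}^{f}) \asymp r(B_{n}^{f})^{\delta}$, which is what converts the $\delta$-mass distributed at each stage into the $f$-mass needed for $\mathcal{H}^{f}$. The content of the mass distribution principle is then the uniform bound $\mu(A) \ll f(r(A))$ for an \emph{arbitrary} ball $A$ (which contains the single-ball case $A = B_{n}$): one compares $r(A)$ with the radii of the two construction levels straddling $A$, bounds via Ahlfors regularity and the built-in separation the number of level balls that $A$ can meet, and invokes the monotonicity of $r^{-\delta} f(r)$ in each scale regime. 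Granting this, the mass distribution principle gives $\mathcal{H}^{f}(\mathcal{K}) \gg \mu(\mathcal{K}) = f(r(B))$; since a direct covering of the $\delta$-regular set $X \cap B$ yields $\mathcal{H}^{f}(B) \ll f(r(B))$ in the finite-measure regime at hand, these combine to the required uniform local estimate $\mathcal{H}^{f}(B \cap G) \geq \kappa\, \mathcal{H}^{f}(B)$.

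I expect the main obstacle to be exactly the mass distribution estimate of the third paragraph: turning the level-by-level construction into the uniform bound $\mu(A) \ll f(r(A))$ valid for \emph{all} balls $A$ and all scales. The delicate scales are the intermediate ones, where $A$ is larger than the children it meets but smaller than their common parent --- precisely the range between an original ball $B_{n}$ and its larger companion $B_{n}^{f}$, where the transference takes place. Handling them forces one to quantify carefully both the separation of the disjoint family $\{B_{n}^{f}\}$ chosen at each stage and the local $\delta$-regularity of $\mu$, so that the count of children met by $A$ is controlled and the monotonicity of $r^{-\delta} f(r)$ can be brought to bear. Everything else --- the Vitali extraction, the tree measure, and the density-theoretic passage from a fixed positive proportion to full measure --- is routine once this estimate is secured.
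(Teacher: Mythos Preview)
The paper does not prove this theorem: it is quoted from Beresnevich--Velani \cite{BV06} as a black-box tool (see the heading of Theorem~\ref{thm:MTP}), so there is no proof in the paper to compare against.

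Your sketch is nonetheless a faithful outline of the original \cite{BV06} argument: a Cantor-type set $\mathcal{K}\subset B\cap G$ built level by level, with the children of each level ball extracted from the full-$\mathcal{H}^{\delta}$-measure hypothesis via a Vitali/$5r$ covering, and a tree measure calibrated by the identity $r(B_{n}^{f})^{\delta}=f(r(B_{n}))$ so that the mass distribution principle converts $\delta$-mass into $f$-mass. You also correctly identify the genuine work as the uniform bound $\mu(A)\ll f(r(A))$ at intermediate scales between $B_{n}$ and $B_{n}^{f}$. One minor comment: the density-theoretic upgrade from ``positive proportion'' to ``full measure'' is not really needed. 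In the only nontrivial regime $r^{-\delta}f(r)\to\infty$ one has $\mathcal{H}^{f}(B)=\infty$ for every ball meeting $X$, and the Cantor construction already furnishes subsets of arbitrarily large $\mathcal{H}^{f}$-mass, which gives the conclusion directly; the remaining regimes ($r^{-\delta}f(r)$ bounded or tending to $0$) are trivial.
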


\subsection{Measure theoretic lemmas}
In this subsection we state several lemmas on measures. The first one is about when a subset has the same measure as the whole set.

\begin{lem}[{\cite[Lemma 1]{LSV07}}] \label{lem:full_measure}
Let $X$ be a compact set in $\R^{k}$ and $\mu$ be a finite measure on $X$ such that all open sets are measurable and $\mu(B(x,2r)) \ll \mu(B(x,r))$ for all balls $B(x,r)$ with center in $X$. Suppose $E$ is a Borel subset of $X$ and there exist positive constants $r_{0}$, $c_{0}$ such that for any ball $B$ with radius $r(B) < r_{0}$ and center in $X$, we have $\mu(E \cap B) \geq c_{0} \mu(B)$. Then $$\mu(E) = \mu(X).$$
\end{lem}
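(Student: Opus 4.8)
The plan is to argue by contradiction, reducing the statement to a local density computation. Write $F = X \setminus E$; since $E$ is Borel and $X$ is compact, $F$ is Borel and hence $\mu$-measurable, and it suffices to show $\mu(F) = 0$. Suppose instead $\mu(F) > 0$. The first step is to reinterpret the hypothesis $\mu(E \cap B) \geq c_0 \mu(B)$ as an upper bound on the local density of $F$: because $\mu$ lives on $X$ and $X = E \sqcup F$, every ball $B(x,r)$ with $x \in X$ satisfies $\mu(B(x,r)) = \mu(E \cap B(x,r)) + \mu(F \cap B(x,r))$, so the hypothesis gives
\[
\mu(F \cap B(x,r)) \leq (1 - c_0)\, \mu(B(x,r)) \qquad \text{whenever } x \in X,\ r < r_0 .
\]
In particular this holds at every $x \in F$, so the upper density of $F$ at each of its own points is at most $1 - c_0 < 1$.

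The second step is to contradict this via a differentiation (Lebesgue density) theorem for the measure $\mu$. The doubling hypothesis $\mu(B(x,2r)) \ll \mu(B(x,r))$ together with the measurability of open sets is exactly what is needed to run the Vitali covering theorem for $\mu$, from which one deduces that for $\mu$-almost every $x$,
\[
\lim_{r \to 0} \frac{\mu(F \cap B(x,r))}{\mu(B(x,r))} = \mathbf{1}_{F}(x).
\]
Since $\mu(F) > 0$, the set of density points of $F$ lying in $F$ has positive measure, so I can fix some $x \in F$ at which this limit equals $1$. But $x \in F \subset X$, so the displayed density bound applies to all sufficiently small balls centered at $x$ and forces the limit to be at most $1 - c_0$. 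This contradiction shows $\mu(F) = 0$, that is, $\mu(E) = \mu(X)$.

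If one prefers to avoid quoting the density theorem, the same conclusion follows from a direct covering argument, which I would set up as follows: by outer regularity of the finite Borel measure $\mu$, choose for each $\varepsilon > 0$ an open set $G \supseteq F$ with $\mu(G) \leq \mu(F) + \varepsilon$; the balls $B(x,r)$ with $x \in F$, $r < r_0$, and $B(x,r) \subset G$ form a fine cover of $F$, so the Vitali covering theorem extracts a countable disjoint subfamily $\{B_i\}$ with $\mu(F \setminus \bigcup_i B_i) = 0$; then
\[
\mu(F) = \sum_i \mu(F \cap B_i) \leq (1 - c_0) \sum_i \mu(B_i) = (1 - c_0)\, \mu\Bigl(\bigcup_i B_i\Bigr) \leq (1 - c_0)(\mu(F) + \varepsilon),
\]
and letting $\varepsilon \to 0$ yields $\mu(F) \leq (1 - c_0)\mu(F)$, whence $\mu(F) = 0$.

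The routine parts are the additivity rewriting in the first step and the outer-regularity approximation in the alternative argument. The one genuine point requiring care — and the place where the doubling hypothesis is indispensable — is the availability of the Vitali covering / differentiation theorem for the abstract doubling measure $\mu$ on the compact set $X$: I would justify it by checking that $\mu$, being finite with open sets measurable and satisfying the stated doubling bound, qualifies for the covering theorem (for instance through the Besicovitch covering theorem in $\R^k$, or directly from the doubling estimate). Everything else then collapses to the two short inequalities above.
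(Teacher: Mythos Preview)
The paper does not supply its own proof of this lemma; it simply quotes it as \cite[Lemma~1]{LSV07}. Your argument is correct and is precisely the standard one: recast the hypothesis as the upper-density bound $\mu(F\cap B(x,r))\le(1-c_0)\mu(B(x,r))$ for $F=X\setminus E$, then contradict this via the Lebesgue density theorem for doubling measures (or, as in your alternative, via a direct Vitali covering and outer-regularity computation). The doubling hypothesis is used exactly where you say, to make the Vitali covering / differentiation theorem available; everything else is routine. There is nothing to compare against in the paper itself, but your proof matches what one finds in the cited source.
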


The second lemma is a generalization of the divergence part of the Borel-Cantelli lemma.

\begin{lem} [{\cite[Lemma 5]{Sprindžuk79}}; see also {\cite[Lemma 2]{LSV07}}] \label{lem:Div_BC}
Let $X$ be a compact set in $\R^k$ and let $\mu$ be a finite measure on $X$. Also, let $E_n$ be a sequence of $\mu$-measurable sets such that
$\sum_{n=1}^{\infty} \mu(E_n) = \infty$. Then
$$\mu (\limsup_{n \to \infty} E_n) \geq \limsup_{Q \to \infty} \frac{\left(\sum_{0<s \leq Q} \mu(E_s)\right)^2}{\sum_{0<s,t \leq Q} \mu(E_s \cap E_t)}.$$
\end{lem}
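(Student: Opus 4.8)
The plan is to prove the finite-$Q$ version of the inequality by a Cauchy–Schwarz (Paley–Zygmund type) argument, then pass to the $\limsup$ set using continuity of the finite measure $\mu$ from above, invoking the divergence hypothesis only at the very last step to replace a tail sum by the full partial sum in the numerator. First I would fix $Q$ and set $f_Q = \sum_{s=1}^Q \mathbf{1}_{E_s}$, so that $\int_X f_Q \, d\mu = \sum_{s \leq Q} \mu(E_s)$ and $\int_X f_Q^2 \, d\mu = \sum_{s,t \leq Q} \mu(E_s \cap E_t)$. Since $f_Q$ vanishes outside $U_Q := \bigcup_{s \leq Q} E_s$, the Cauchy–Schwarz inequality gives $(\int_X f_Q \, d\mu)^2 = (\int_{U_Q} f_Q \, d\mu)^2 \leq \mu(U_Q)\, \int_X f_Q^2 \, d\mu$, whence
$$\mu\Big(\bigcup_{s \leq Q} E_s\Big) \geq \frac{\big(\sum_{s \leq Q} \mu(E_s)\big)^2}{\sum_{s,t \leq Q} \mu(E_s \cap E_t)}.$$
(If the denominator is $0$ then all $\mu(E_s)=0$ for $s\leq Q$, the numerator is $0$ too, and the bound is trivial.) The identical argument applied to the tail $E_N, E_{N+1}, \ldots$ yields, for each fixed $N$ and all $Q \geq N$,
$$\mu\Big(\bigcup_{N \leq s \leq Q} E_s\Big) \geq \frac{\big(\sum_{N \leq s \leq Q} \mu(E_s)\big)^2}{\sum_{N \leq s,t \leq Q} \mu(E_s \cap E_t)}.$$

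Next I would pass to the $\limsup$ set. Writing $V_N := \bigcup_{n \geq N} E_n$, the sets $V_N$ decrease to $\limsup_n E_n = \bigcap_N V_N$; since $\mu$ is finite, continuity from above gives $\mu(\limsup_n E_n) = \lim_{N \to \infty} \mu(V_N)$. For each fixed $N$ we have $\mu(V_N) \geq \mu(\bigcup_{N \leq s \leq Q} E_s)$ for every $Q \geq N$, so taking $\limsup_{Q \to \infty}$ of the tail inequality and enlarging the denominator from $\sum_{N \leq s,t \leq Q}$ to the full $\sum_{1 \leq s,t \leq Q}$ gives
$$\mu(V_N) \geq \limsup_{Q \to \infty} \frac{\big(\sum_{N \leq s \leq Q} \mu(E_s)\big)^2}{\sum_{1 \leq s,t \leq Q} \mu(E_s \cap E_t)}.$$

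The final and crucial step is to remove the dependence on $N$ in the numerator, and this is where I would use the hypothesis $\sum_n \mu(E_n) = \infty$. For fixed $N$ the quantity $\sum_{s \leq N-1} \mu(E_s)$ is a finite constant, so $\sum_{N \leq s \leq Q} \mu(E_s) = \sum_{s \leq Q} \mu(E_s) - \sum_{s \leq N-1} \mu(E_s)$ is asymptotic to $\sum_{s \leq Q} \mu(E_s)$ as $Q \to \infty$, since the full partial sum diverges. Hence the ratio of the squared numerators tends to $1$, and multiplying a nonnegative sequence by a factor tending to $1$ leaves its $\limsup$ unchanged, so
$$\mu(V_N) \geq \limsup_{Q \to \infty} \frac{\big(\sum_{s \leq Q} \mu(E_s)\big)^2}{\sum_{s,t \leq Q} \mu(E_s \cap E_t)},$$
a bound independent of $N$. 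Letting $N \to \infty$ and using $\mu(\limsup_n E_n) = \lim_N \mu(V_N)$ then yields the asserted inequality.

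I expect the main obstacle to be precisely this last comparison. The Cauchy–Schwarz bound is naturally a statement about a finite union $\bigcup_{N \leq s \leq Q} E_s$, and passing to the $\limsup$ forces the tail index $N$ to appear in the numerator, whereas the target inequality is phrased with the full partial sum starting from $s=1$. It is exactly the divergence of $\sum_n \mu(E_n)$ that renders the discarded initial block $\sum_{s \leq N-1}\mu(E_s)$ asymptotically negligible and licenses the replacement; without divergence the two numerators would differ by a non-negligible constant and the clean bound would fail.
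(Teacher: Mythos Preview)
The paper does not give its own proof of this lemma; it is simply quoted from \cite{LSV07} as a tool. Your argument is the standard Cauchy--Schwarz (Paley--Zygmund / Kochen--Stone) proof of this divergence Borel--Cantelli inequality and is correct as written, including the final step where the divergence hypothesis is used to absorb the fixed initial block $\sum_{s<N}\mu(E_s)$ into the growing partial sum.
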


In \cref{thm:main}, an infinite countable set $I$ is used, and we have the following variation of Lemma \ref{lem:Div_BC}: If $\sum_{i \in I} \mu(E_i) = \infty$, then
\begin{align}
\mu (\limsup_{i \to \infty, i \in I} E_i) \geq \limsup_{Q \to \infty} \frac{\left(\sum_{0<s \leq Q, s \in I} \mu(E_s)\right)^2}{\sum_{0<s,t \leq Q, s,t \in I} \mu(E_s \cap E_t)}. \label{eq:Div_BC}
\end{align}

\section{Intersection of balls and the missing digit set} \label{sec:inter}
The set $W_{b}(\psi)$ is the limsup of balls of the form $B(p b^{-n}, \psi(n))$, so we investigate what is the intersection of those balls and $C(b,D)$. We start with an example illustrating why Claim \ref{claim} is false.

\begin{exmp} \label{exmp}
Let $b = 5$, $D = \{1,2\}$, $\gamma = \log 2 / \log 5$ be the dimension of the missing digit set $C(5, \{1,2\})$ and
$$\psi(n) = \sum_{i=n+1}^{\infty} \frac{1}{5^i} = \frac{1}{4 \times 5^n}.$$
Since
$$\sum_{n=1}^{\infty} \left(\frac{1}{4 \times 5^n}\right)^{\gamma} \times 5^{n \gamma} = \sum_{n=1}^{\infty} \frac{1}{4^{\gamma}} = \infty,$$
Claim \ref{claim} says $$\mathcal{H}^{\gamma} \left(W_{5}\left(\psi\right) \cap C(5, \{1,2\})\right) =\mathcal{H}^{\gamma}(C(5, \{1,2\})) >0.$$ 

Let $x \in [0,1]$ be a number in $B(p 5^{-n}, \psi(n))$ for some $(p,n) \in \N^2$, then
$$\frac{p-1}{5^n} + \sum_{i=n+1}^{\infty} \frac{3}{5^i} < x < \frac{p}{5^n} + \sum_{i=n+1}^{\infty} \frac{1}{5^i},$$
hence the base $5$ expansion of $x$ must contain a digit $0$ or $4$, and thus $x \notin C(5, \{1,2\})$. Therefore in this case 
$$W_{5}\left(\psi\right) \cap C(5, \{1,2\}) = \emptyset$$ cannot have positive measure.
\end{exmp}

When the set $D$ does not contain $0$ and $b-1$, we have $p b^{-n} \notin C(b,D)$ for any $(p,n) \in \N^2$, so the intersections of balls $B(p b^{-n}, \psi(n))$ and $C(b,D)$ are all empty unless $\psi(n)$ is not too small; see the proof of Lemma \ref{lem:intersection} for details. To better understand how those intersections are, we introduce several notations. For any $n \geq 1$, let $C_{n}(b,D)$ be the $n$-th level of $C(b,D)$, which consists of $b^{n \gamma}$ intervals of length $b^{-n}$. More precisely, $$C_{n}(b,D) = \left\{\sum_{i=1}^{\infty} \frac{x_{i}}{b^{i}} \in [0,1] \colon x_{i} \in D \text{ for } i=1,...,n\right\}.$$

Denote the set of all left endpoints of the intervals in $C_{n}(b,D)$ by $L_{n}$ and the set of all right endpoints of the intervals in $C_{n}(b,D)$ by $R_{n}$. Note that a point can be both a left and right endpoint. For instance, in Example \ref{exmp}, we have $L_{1} = \{1/5, 2/5\}$ and $R_{1} = \{2/5, 3/5\}$, hence $2/5$ is in both $L_{1}$ and $R_{1}$. The two quantities below are used to measure how many digits of $\{0,\ldots,b-1\}$ are missing in $D$ from left and right respectively.
\begin{align*}
m_{l} = \min\{D\} \quad \text{ and } \quad m_{r} = b-1-\max\{D\}.
\end{align*}
Recall that $m = \min\{m_{l}, m_{r}\}$.

\begin{lem} \label{lem:intersection}
Suppose $\psi(n) < b^{-n}/2$. Denote $$d_{l,n} = \frac{m_{l}}{(b-1)b^n} \quad \text{ and } \quad d_{r,n} = \frac{m_{r}}{(b-1)b^n}.$$ We make the convention that an open ball with non-positive radius is regarded as an empty set.
\begin{enumerate}
\item If $p b^{-n} \in L_{n} \setminus R_{n}$, then
$$B\left(\frac{p}{b^{n}},\psi(n)\right) \cap C(b,D) = B\left(\frac{p}{b^{n}} + d_{l,n},\psi(n)-d_{l,n}\right) \cap C(b,D).$$

\item If $p b^{-n} \in R_{n} \setminus L_{n}$, then
$$B\left(\frac{p}{b^{n}},\psi(n)\right) \cap C(b,D) = B\left(\frac{p}{b^{n}} - d_{r,n},\psi(n)-d_{r,n}\right) \cap C(b,D).$$

\item If $p b^{-n} \in L_{n} \cup R_{n}$, then
\begin{align*}
&B\left(\frac{p}{b^{n}},\psi(n)\right) \cap C(b,D) \\
= &\left(B\left(\frac{p}{b^{n}} + d_{l,n},\psi(n)-d_{l,n}\right) \cup B\left(\frac{p}{b^{n}} - d_{r,n},\psi(n)-d_{r,n}\right) \right) \\
&\cap C(b,D).
\end{align*}

\item If $p b^{-n} \notin L_{n} \cup R_{n}$, then $$B\left(\frac{p}{b^{n}},\psi(n)\right) \cap C(b,D) = \emptyset.$$
\end{enumerate}
\end{lem}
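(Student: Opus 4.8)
The plan is to analyze the ball $B(p/b^n, \psi(n))$ of radius less than $b^{-n}/2$ directly against the base-$b$ combinatorics of $C(b,D)$. The key observation is that a rational $p/b^n$ in lowest-denominator-$b^n$ form has a terminating base-$b$ expansion, so it is an endpoint of some level-$k$ interval of the full $b$-adic subdivision for $k \leq n$; whether it lies in $L_n$, $R_n$, both, or neither of $C_n(b,D)$ is determined by which digits among $\{0,\ldots,b-1\}$ appear in the last few positions of that expansion versus what $D$ allows. Since $\psi(n) < b^{-n}/2$, the ball $B(p/b^n,\psi(n))$ is contained in the open interval $(p/b^n - b^{-n}/2, p/b^n + b^{-n}/2)$, which meets at most the two level-$n$ $b$-adic intervals immediately adjacent to $p/b^n$ (the one ending at $p/b^n$ and the one starting at $p/b^n$). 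So the intersection with $C(b,D)$ is controlled entirely by whether each of these two adjacent $b$-adic intervals belongs to $C_n(b,D)$, i.e. whether $p/b^n \in R_n$ and whether $p/b^n \in L_n$.

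First I would treat case (4): if $p/b^n \notin L_n \cup R_n$, then neither adjacent level-$n$ interval is in $C_n(b,D)$, and because consecutive points of $C(b,D)$ straddling $p/b^n$ are separated by a gap, I need to show that gap has length at least $b^{-n}$ on each side minus a controlled amount — more precisely, I claim that the nearest point of $C(b,D)$ to $p/b^n$ on either side is at distance $\geq b^{-n}/2$, so the ball misses $C(b,D)$ entirely. This follows by writing $p/b^n = \sum_{i=1}^{n} y_i/b^i$ with $y_i \in \{0,\ldots,b-1\}$ and locating the first index (from the right, ignoring trailing zeros) where the digit string fails to be extendable to an element of $C(b,D)$; a short computation with geometric series of the extremal admissible digits $\min D$ and $\max D$ gives the distance bound. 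Next, for cases (1)--(3) I would compute exactly how far into the adjacent admissible interval the set $C(b,D)$ starts. If $p/b^n \in L_n \setminus R_n$, the relevant interval is $[p/b^n, p/b^n + b^{-n}]$, and the leftmost point of $C(b,D)$ in it is $p/b^n + \sum_{i=n+1}^\infty (\min D)/b^i = p/b^n + m_l/((b-1)b^n) = p/b^n + d_{l,n}$; points of $C(b,D)$ on the other side of $p/b^n$ are at distance $\geq b^{-n}/2 > \psi(n)$ away, so they contribute nothing. Hence $B(p/b^n,\psi(n)) \cap C(b,D) = B(p/b^n + d_{l,n}, \psi(n) - d_{l,n}) \cap C(b,D)$, with the convention that a non-positive radius gives the empty set when $\psi(n) \leq d_{l,n}$. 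The symmetric computation handles (2) with the rightmost point $p/b^n - \sum_{i=n+1}^\infty (\max D)/b^i = p/b^n - d_{r,n}$, and (3) is the union of the two, since when $p/b^n \in L_n \cap R_n$ both adjacent intervals are present and $C(b,D)$ approaches $p/b^n$ from both sides (at distances $d_{l,n}$ and $d_{r,n}$ respectively).

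The main obstacle is the bookkeeping in case (4) — and in establishing that the ``far side'' contributions in (1)--(3) are genuinely empty — namely proving the uniform lower bound: whenever the level-$n$ $b$-adic interval abutting $p/b^n$ on a given side is not one of the $C_n(b,D)$ intervals, every point of $C(b,D)$ on that side is at distance at least $b^{-n}/2$ (indeed at least something like $b^{-n}$ up to the geometric-tail correction) from $p/b^n$. This requires carefully identifying, in the base-$b$ expansion of $p/b^n$, the last position at which the expansion deviates from being a legal prefix of a $C(b,D)$-point, and then bounding below the jump to the nearest legal configuration using that the admissible digits lie in $\{\min D,\ldots,\max D\}$ with $\#D \geq 2$; the fact that $\min D$ could be $0$ or $\max D$ could be $b-1$ (so that $d_{l,n}$ or $d_{r,n}$ vanishes) must be tracked so the statement degenerates correctly to the classical $b=t$, $D \ni 0, b-1$ situation. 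Once this separation lemma is in hand, cases (1)--(4) are immediate, and the convention on empty balls absorbs the boundary case $\psi(n) \leq d_{l,n}$ or $\psi(n) \leq d_{r,n}$.
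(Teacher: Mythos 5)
Your proposal is correct and takes essentially the same approach as the paper: use $\psi(n)<b^{-n}/2$ to confine the ball to the two level-$n$ $b$-adic intervals abutting $p/b^{n}$, so that everything reduces to whether each of those intervals is a constituent of $C_{n}(b,D)$, and then locate the extremal points of $C(b,D)$ inside an admissible adjacent interval via the geometric tails of the extremal digits, which land exactly at $p/b^{n}+d_{l,n}$ and $p/b^{n}-d_{r,n}$ (the paper phrases this as: any $x$ with $p/b^{n}\leq x<p/b^{n}+d_{l,n}$ must carry a digit in $\{0,\dots,m_{l}-1\}$ beyond position $n$, which is the same computation). Two minor remarks: in case (2) the rightmost point of $C(b,D)$ in $[(p-1)/b^{n},p/b^{n}]$ is $p/b^{n}-\sum_{i>n}(b-1-\max D)/b^{i}$ rather than $p/b^{n}-\sum_{i>n}(\max D)/b^{i}$ (your identification of it with $p/b^{n}-d_{r,n}$ is nevertheless the correct one), and the digit-by-digit separation lemma you anticipate for case (4) is unnecessary, since the ball being contained in two non-constituent closed $b$-adic intervals already forces $B(p/b^{n},\psi(n))\cap C_{n}(b,D)=\emptyset$ directly.
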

\begin{proof}
\begin{enumerate}
\item If $m_{l}=0$, then $d_{l,n}=0$ and the equality holds trivially. So we assume $m_{l}>0$ without loss of generality. Let $x \in B(p b^{-n}, \psi(n))$. If $x<p b^{-n}$, then $\psi(n) < b^{-n}/2$ implies that $x \notin C_{n}(b,D) \supset C(b,D).$ If $p b^{-n} \leq x < p b^{-n} + d_{l,n}$, then $$\frac{p}{b^n} \leq x < \frac{p}{b^n} + \frac{m_{l}}{(b-1)b^n} =  \frac{p}{b^n} + \sum_{i=n+1}^{\infty} \frac{m_{l}}{b^{i}},$$
so the base $b$ expansion of $x$ contains a digit between $0$ and $m_{l}-1$, hence $x \notin C(b,D)$ by the definition of $m_{l}$. Then $x \in C(b,D)$ only happens when $x \geq p b^{-n} + d_{l,n}$, which implies that $\psi(n) > d_{l,n}$. Also note that if $x \geq p b^{-n} + d_{l,n}$, then $$0 \leq x - \left(\frac{p}{b^{n}} + d_{l,n}\right) = x - \frac{p}{b^{n}} - d_{l,n} < \psi(n)-d_{l,n},$$ so $x \in B\left(p b^{-n} + d_{l,n},\psi(n)-d_{l,n}\right)$. Therefore if $\psi(n) > d_{l,n}$, then $$B\left(\frac{p}{b^{n}},\psi(n)\right) \cap C(b,D) \subset B\left(\frac{p}{b^{n}} + d_{l,n},\psi(n)-d_{l,n}\right) \cap C(b,D)$$ and these two sets are equal since $$B\left(\frac{p}{b^{n}} + d_{l,n},\psi(n)-d_{l,n}\right) \subset B\left(\frac{p}{b^{n}},\psi(n)\right)$$ is trivial.

\item Again, we may assume $m_{r}>0$ without loss of generality. Let $x \in B(p b^{-n}, \psi(n))$. If $x>p b^{-n}$, then $x \notin C_{n}(b,D)$ since $\psi(n) < b^{-n}/2$. If $p b^{-n}-d_{r,n} \leq x < p b^{-n}$, then $$\frac{p-1}{b^n} + \sum_{i=n+1}^{\infty} \frac{b-1-m_{r}}{b^{i}} \leq x < \frac{p}{b^n},$$ so the base $b$ expansion of $x$ contains a digit between $b-m_{l}$ and $b-1$, hence $x \notin C(b,D)$ by the definition of $m_{r}$. Then a similar argument as in the previous case shows that $$B\left(\frac{p}{b^{n}},\psi(n)\right) \cap C(b,D) = B\left(\frac{p}{b^{n}} - d_{r,n},\psi(n)-d_{r,n}\right) \cap C(b,D)$$ if $\psi(n) > d_{r,n}$ and the intersection on the left is empty when $\psi(n) \leq d_{r,n}$.

\item This part is treated by combining the arguments in previous two cases, hence we skip the details. 

\item If $p b^{-n} \notin R_{n} \cup L_{n}$, then $\psi(n) < b^{-n}/2$ implies that $$B\left(p b^{-n},\psi(n)\right) \cap C_{n}(b,D) = \emptyset,$$ and thus $B\left(p b^{-n},\psi(n)\right) \cap C(b,D) = \emptyset.$
\end{enumerate}
\end{proof}

By the definition of $m_{l}$ and $m_{r}$, the new ball centers $p b^{-n} + d_{l,n}$ and $p b^{-n} - d_{l,n}$ are points in the missing digit set $C(b,D)$, hence if $\mu = \mathcal{H}^{\gamma} \mid_{C(b,D)}$, then 
\begin{align}
\mu\left(B\left(\frac{p}{b^{n}} + d_{l,n},\psi(n)-d_{l,n}\right)\right) \asymp (\psi(n)-d_{l,n})^{\gamma}, \label{eq:mu_left} \\
\mu\left(B\left(\frac{p}{b^{n}} - d_{r,n},\psi(n)-d_{r,n}\right)\right) \asymp (\psi(n)-d_{r,n})^{\gamma}. \label{eq:mu_right}
\end{align}

\section{A special case} \label{sec:A}
In this section we prove a special case of \cref{thm:main} where $b=t$. It will be used later to deduce \cref{thm:main}.
\begin{lem} \label{thm:A}
Suppose $C(b,D)$ is a missing digit set with Hausdorff dimension $\gamma$, $m = \min\{ \min D, b-1-\max D \},$ and $A=(a_{n})_{n \geq 1}$ is an unbounded non-decreasing sequence of positive integers. Let $f$ be a dimension function such that $r^{-\gamma} f(r)$ is monotonic, and $\psi : \N \to (0,\infty)$ be a function. Then $\mathcal{H}^{f} (W_{b,A}(\psi) \cap C(b,D)) =$
\begin{align*}
\begin{cases}
0, &\text{ if }  \sum\limits_{i \in I \colon \psi_{A}(i) > \frac{m}{(b-1) b^{ i }}} f\left(\psi_{A}(i) - \frac{m}{(b-1) b^{i}}\right) b^{i \gamma } < \infty, \\
\mathcal{H}^{f}(C(b,D)), &\text{ if }  \sum\limits_{i \in I \colon \psi_{A}(i) > \frac{m}{(b-1) b^{i }}} f\left(\psi_{A}(i) - \frac{m}{(b-1) b^{i}}\right) b^{i \gamma} = \infty.
\end{cases}
\end{align*}
\end{lem}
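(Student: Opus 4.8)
The plan is to rewrite $W_{b,A}(\psi)\cap C(b,D)$ as a $\limsup$ of balls adapted to the self-similar structure of $C(b,D)$, and then to dispatch the convergence half by a direct covering estimate and the divergence half via the mass transference principle. Since $A$ is non-decreasing and unbounded, each value is attained only finitely often, so $I=I(A)$ is infinite and $\psi_A(i)$ is well defined; grouping the finitely many pairs $(p,n)$ with $a_n=i$ shows that $W_{b,A}(\psi)=\limsup_{i\in I}\bigcup_{p}B(p/b^i,\psi_A(i))$. I would first reduce to the case $\psi_A(i)<b^{-i}/2$ for all $i$: if there are infinitely many $i\in I$ with $\psi_A(i)\ge b^{-i}/2$, then for each such $i$ the set $\bigcup_{p}B(p/b^i,\psi_A(i))\cap C(b,D)$ is $C(b,D)$ with at most finitely many points deleted, so $\mathcal H^f(W_{b,A}(\psi)\cap C(b,D))=\mathcal H^f(C(b,D))$; moreover $\psi_A(i)-\tfrac{m}{(b-1)b^i}\gg b^{-i}$ for these $i$, so the monotonicity of $r^{-\gamma}f(r)$ keeps $f(\psi_A(i)-\tfrac{m}{(b-1)b^i})\,b^{i\gamma}$ bounded away from $0$ along these $i$, hence makes the divergence series diverge, unless $\lim_{r\to0}r^{-\gamma}f(r)=0$, in which case $\mathcal H^f(C(b,D))=0$ and the statement is vacuous. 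After this reduction, replacing $C(b,D)$ by its reflection $x\mapsto1-x$ if necessary, I may also assume $m=m_l=\min D$, and I set $\eta_i:=\psi_A(i)-\tfrac{m_l}{(b-1)b^i}$.

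The structural input is \cref{lem:intersection}. For $i\in I$ and a level-$i$ cylinder $J$ of $C(b,D)$ with left endpoint $c_J$ (note $c_J=p/b^i$ for an integer $p$), parts (1) and (3) give $B(c_J,\psi_A(i))\cap C(b,D)\supseteq B(c_J+d_{l,i},\eta_i)\cap C(b,D)$ in the notation of \cref{lem:intersection}, and the new centre $c_J+d_{l,i}$ lies in $C(b,D)$ because its base-$b$ expansion is the digit string of $J$ followed by $\min D$ repeated; parts (1)--(4) show that $B(p/b^i,\psi_A(i))\cap C(b,D)$ is empty unless $p/b^i\in L_i\cup R_i$ and $\eta_i>0$, in which case it lies in a union of at most two balls of radius $\le\eta_i$ centred in $C(b,D)$. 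Writing $\widehat B_{i,J}=B(c_J+d_{l,i},\eta_i)$ and discarding the $i$ with $\eta_i\le0$, one obtains the sandwich
\[
\limsup_{i\in I}\bigcup_{J}\widehat B_{i,J}\ \cap\ C(b,D)\ \subseteq\ W_{b,A}(\psi)\cap C(b,D)\ \subseteq\ \limsup_{i\in I}\bigcup_{p:\,p/b^i\in L_i\cup R_i}B(p/b^i,\psi_A(i)),
\]
where, for every $N$, the right-hand $\limsup$ is covered, over all $i\ge N$ with $i\in I$ and $\eta_i>0$, by $O(b^{i\gamma})$ balls of radius $\le\eta_i<b^{-N}/2$. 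Consequently, if $\sum_{i\in I:\,\eta_i>0}f(\eta_i)b^{i\gamma}<\infty$, then $\mathcal H^f_{b^{-N}/2}(W_{b,A}(\psi)\cap C(b,D))\ll\sum_{i\ge N,\,i\in I,\,\eta_i>0}f(\eta_i)b^{i\gamma}\to0$ as $N\to\infty$, so $\mathcal H^f(W_{b,A}(\psi)\cap C(b,D))=0$; no monotonicity of $r^{-\gamma}f(r)$ is needed here beyond the reduction step.

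For the divergence case the left-hand inclusion reduces the claim to $\mathcal H^f(\limsup_{i\in I}\bigcup_J\widehat B_{i,J})=\mathcal H^f(C(b,D))$. As $\mu:=\mathcal H^\gamma|_{C(b,D)}$ is $\gamma$-Ahlfors regular, \cref{thm:MTP} reduces this to showing that $\limsup_{i\in I}\bigcup_J B(c_J+d_{l,i},f(\eta_i)^{1/\gamma})$ has full $\mathcal H^\gamma$-measure in $C(b,D)$, and since $\mu$ is doubling, \cref{lem:full_measure} reduces it further to finding $c_0>0$ with $\mu(K\cap\limsup_{i\in I}\bigcup_J B(c_J+d_{l,i},f(\eta_i)^{1/\gamma}))\ge c_0\,\mu(K)$ for every cylinder $K$ of $C(b,D)$. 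Fixing $K$ of level $\ell$ and setting $E_i:=\bigcup_{J\subseteq K}B(c_J+d_{l,i},f(\eta_i)^{1/\gamma})$ (the union over level-$i$ cylinders $J\subseteq K$, for $i\in I$ with $i>\ell$ and $\eta_i>0$), Ahlfors regularity and \eqref{eq:mu_left} give $\mu(E_i)\asymp(\# D)^{i-\ell}f(\eta_i)=b^{-\ell\gamma}b^{i\gamma}f(\eta_i)$, so $\sum_i\mu(E_i)=\infty$ by hypothesis. The variant \eqref{eq:Div_BC} of \cref{lem:Div_BC} then gives $\mu(\limsup_i E_i)\ge\limsup_{Q\to\infty}(\sum_{i\le Q}\mu(E_i))^2/\sum_{i,i'\le Q}\mu(E_i\cap E_{i'})$, and I would finish with a quasi-independence estimate of the form $\sum_{i,i'\le Q}\mu(E_i\cap E_{i'})\ll\mu(K)^{-1}(\sum_{i\le Q}\mu(E_i))^2+o((\sum_{i\le Q}\mu(E_i))^2)$, with implied constants depending only on $b$ and $D$: this yields $\mu(\limsup_i E_i)\gg\mu(K)$ uniformly in $K$, and \cref{lem:full_measure} applies.

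The step I expect to be the main obstacle is precisely this quasi-independence estimate. Up to boundary effects, $E_i$ is the union of the level-$(i+\kappa_i)$ cylinders obtained by appending $\kappa_i$ copies of $\min D$ to the level-$i$ cylinders contained in $K$, where $\kappa_i\asymp\gamma^{-1}\log_b(1/f(\eta_i))-i$; thus membership in $E_i$ is a ``run of $\min D$'' condition on the digits in positions $i+1,\dots,i+\kappa_i$, and two sets $E_i,E_{i'}$ are genuinely correlated exactly when their digit windows overlap. Bounding the contribution of these overlapping pairs, and discarding the ``fat'' levels (those $i$ with $\kappa_i\le0$, where $E_i$ already has measure $\asymp\mu(K)$), is where the real work lies, and it is here that the monotonicity of $r^{-\gamma}f(r)$ enters: in the non-decreasing regime one has $f(\eta_i)^{1/\gamma}\le\eta_i<b^{-i}/2$ and the windows behave tamely, whereas in the non-increasing regime $f(\eta_i)^{1/\gamma}$ may exceed $b^{-i}$ and one must split according to the size of $f(\eta_i)^{1/\gamma}$ relative to $b^{-i}$. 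Everything else---the reduction steps, the covering bound, and the bookkeeping in the mass transference step---is routine, and \cref{thm:main} itself will follow from this lemma by the change of variables between powers of $b$ and powers of $t$ carried out in the next section.
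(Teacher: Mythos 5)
Your architecture coincides with the paper's: reduce to $\psi_A(i)<b^{-i}/2$, use \cref{lem:intersection} to replace the balls $B(p/b^i,\psi_A(i))$ by balls of radius $\psi_A(i)-d_{l,i}$ (resp.\ $\psi_A(i)-d_{r,i}$) centred in $C(b,D)$, prove the convergence half by the obvious cover by $O(b^{i\gamma})$ such balls, and prove the divergence half by combining \cref{thm:MTP}, \cref{lem:full_measure} and the divergence Borel--Cantelli inequality \eqref{eq:Div_BC}. Your reflection trick to reduce to $m=m_l$ is a legitimate shortcut for the paper's device of treating $LW^{\ast}_{b,A}(\psi)$ and $RW^{\ast}_{b,A}(\psi)$ separately and observing that divergence of the $m$-series forces divergence of at least one of the two one-sided series. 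The reduction step and the convergence half are essentially the paper's \cref{lem:conv} and are fine.

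The gap is that you never prove the pairwise correlation bound $\mu(E_i\cap E_{i'})\ll\mu(E_i)\mu(E_{i'})/\mu(K)$ (up to an admissible error), which is the entire substance of the divergence half and occupies the paper's \cref{lem:quasi-independence}; you explicitly defer it as ``where the real work lies''. Two concrete difficulties live there. First, the shifted centres are not distinct across levels: if a level-$i'$ cylinder $J'$ is obtained from a level-$i$ cylinder $J$ by appending $i'-i$ copies of $\min D$, then $c_{J'}+d_{l,i'}=c_J+d_{l,i}$, so the naive claim that balls from different levels are separated fails; the paper handles this by restricting to the subfamilies $L_i^{\ast}$ of genuinely new centres and proving the separation $\abs{(p_1b^{-i}+d_{l,i})-(p_2b^{-j}+d_{l,j})}\ge b^{-i}$ via the identity $(b-1)\mid(b^{i-j}-1)$. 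Your digit-window heuristic would instead absorb the coinciding centres into the overlap count, and I believe it can be pushed through (the overlapping-window pairs contribute $\ll\sum_i\mu(E_i)$, which is $o\bigl(\bigl(\sum_i\mu(E_i)\bigr)^2\bigr)$ under divergence), but the balls are not exactly unions of cylinders, the levels with $f(\eta_i)^{1/\gamma}\ge b^{-i}$ require the separate counting argument of \eqref{eq:quasi_2}--\eqref{eq:quasi_3}, and none of this is written down. Second, after invoking \cref{thm:MTP} you must check that the inflated radii $f(\eta_i)^{1/\gamma}$ (shifted back by $d_{l,i}$, as in the paper's auxiliary function $\theta$ in \cref{lem:left}) still satisfy the smallness hypothesis under which your quasi-independence would be proved; this is exactly the case split you flag but do not carry out. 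As written, the proposal is a correct plan with its central lemma missing.
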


\begin{remark}
We can assume $\psi_{A}(i) < b^{-i}/2$ for all $i$ without loss of generality. Indeed, suppose there exists an infinite set $I_{0}$ such that $\psi_{A}(i) \geq b^{-i}/2$ for all $i \in I_{0}$, thus $W_{b,A}(\psi) = [0,1]$. Since the cardinality of $D$ is at least $2$, we have $m < (b-1)/2$. Then
\begin{align}
f\left(\psi_{A}(i) - \frac{m}{(b-1) b^{i}}\right) b^{i \gamma} &\geq f\left(\frac{1}{2b^{i}} - \frac{m}{(b-1) b^{i}}\right) b^{i \gamma} \nonumber\\
&= f\left(\frac{b-1-2m}{2(b-1) b^{i}}\right) b^{i \gamma} \nonumber\\
&\geq f\left(\frac{1}{2(b-1) b^{i}}\right) b^{i \gamma} \nonumber \\
&\geq b^{-\gamma} f\left(\frac{1}{2 b^{i+1}}\right) b^{(i+1) \gamma}. \label{eq:remark}
\end{align}
for all $i \in I_{0}$. For any $\rho > 0$ and any integer $i_{0}$ big enough, we have
\begin{align*}
\mathcal{H}^{f}_{\rho}(C(b,D)) \ll& \sum_{i \geq i_{0} \colon i \in I_{0}} f\left(\frac{1}{2b^{i}}\right) b^{i \gamma}.
\end{align*}
If $\mathcal{H}^{f}(C(b,D)) = 0$, then \cref{thm:A} is trivial. Otherwise, $$\sum\limits_{i \in I_{0}} f\left(\frac{1}{2 b^{i}}\right) b^{i \gamma} = \infty,$$ which implies $$\sum\limits_{i \in I \colon \psi_{A}(i) > \frac{m}{(b-1) b^{i }}} f\left(\psi_{A}(i) - \frac{m}{(b-1) b^{i}}\right) b^{i \gamma} = \infty$$ by \eqref{eq:remark} and the monotonicity of $r^{-\gamma} f(r)$. So in this case, \cref{thm:A} is also valid. 
\end{remark} 

The proof of \cref{thm:A} naturally splits into two parts: the convergence part and the divergence part. We start with the convergence part, which involves finding covers of $W_{b,A}(\psi) \cap C(b,D)$ of arbitrarily small measure.
\begin{lem} \label{lem:conv}
If $$\sum\limits_{i \in I \colon \psi_{A}(i) > \frac{m}{(b-1) b^{ i }}} f\left(\psi_{A}(i) - \frac{m}{(b-1) b^{i}}\right) b^{i \gamma } < \infty,$$ then
\begin{align*}
\mathcal{H}^{f} (W_{b,A}(\psi) \cap C(b,D))= 0.
\end{align*}
\end{lem}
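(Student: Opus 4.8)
The plan is to construct, for every small $\rho > 0$, a $\rho$-cover of $W_{b,A}(\psi) \cap C(b,D)$ whose $f$-sum is controlled by the tail of the convergent series, so that $\mathcal{H}^{f}_{\rho}$ can be made arbitrarily small. First I would reduce to the normalized situation $\psi_{A}(i) < b^{-i}/2$ for all $i$, which is permitted by the Remark preceding the lemma. Next, I would note that since $A$ is non-decreasing and $W_{b,A}(\psi)$ is a limsup set, we have
\begin{align*}
W_{b,A}(\psi) \cap C(b,D) \subset \bigcup_{i \in I,\, i \geq N} \bigcup_{0 \leq p \leq b^{i}} B\!\left(\frac{p}{b^{i}}, \psi_{A}(i)\right) \cap C(b,D)
\end{align*}
for every $N$, because any $x$ in the limsup lies in infinitely many balls $B(p/t^{a_n},\psi(n))$, each of which is contained in some $B(p/b^{i},\psi_{A}(i))$ with $i = a_n \in I$ and $i$ arbitrarily large. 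Thus it suffices to cover each of these level-$i$ intersections efficiently and sum over $i \geq N$.

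The heart of the matter is \cref{lem:intersection}: for each $i$ with $\psi_{A}(i) < b^{-i}/2$, the only pairs $(p,i)$ for which $B(p/b^{i},\psi_{A}(i)) \cap C(b,D) \neq \emptyset$ are those with $p/b^{i} \in L_{i} \cup R_{i}$, and for such $p$ the intersection is contained in one or two balls of radius $\psi_{A}(i) - d_{l,i}$ or $\psi_{A}(i) - d_{r,i}$, both at most $\psi_{A}(i) - \frac{m}{(b-1)b^{i}}$ (using $m = \min\{m_l,m_r\}$ and discarding the empty case when this is non-positive). The number of endpoints in $L_{i} \cup R_{i}$ is $O(b^{i\gamma})$ since $C_i(b,D)$ has $(\#D)^i = b^{i\gamma}$ intervals. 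Each such small ball of radius $r_i := \psi_{A}(i) - \frac{m}{(b-1)b^{i}}$ contributes $f(r_i)$ to the $f$-sum — or, if $r_i$ already exceeds $\rho$, I would further subdivide it into $O\big( (r_i b^{k})^{\gamma}\big)$ level-$k$ Cantor intervals of radius $\asymp b^{-k} \leq \rho$ and use monotonicity of $r^{-\gamma}f(r)$ to bound the resulting $f$-sum by $O(f(r_i))$ again (a standard device; in fact once $i$ is large enough $r_i < \rho$ automatically since $r_i < b^{-i}/2$). Summing, for $i_0 = i_0(\rho)$ large,
\begin{align*}
\mathcal{H}^{f}_{\rho}\big(W_{b,A}(\psi) \cap C(b,D)\big) \ll \sum_{i \in I,\, i \geq i_0,\, \psi_A(i) > \frac{m}{(b-1)b^i}} f\!\left(\psi_A(i) - \frac{m}{(b-1)b^i}\right) b^{i\gamma},
\end{align*}
which is the tail of the convergent series and hence tends to $0$ as $\rho \to 0$ (equivalently as $i_0 \to \infty$). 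Letting $\rho \to 0$ gives $\mathcal{H}^{f}(W_{b,A}(\psi) \cap C(b,D)) = 0$.

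The main obstacle I anticipate is purely bookkeeping rather than conceptual: one must handle the endpoints that lie in both $L_i$ and $R_i$ (so that \cref{lem:intersection}(3) applies and the intersection is covered by two balls rather than one), and one must be careful that the replacement radius $\psi_{A}(i) - \frac{m}{(b-1)b^{i}}$ — as opposed to the individual $\psi_A(i) - d_{l,i}$ and $\psi_A(i) - d_{r,i}$ — is a valid upper bound for the radius of every ball in the cover, which follows since $d_{l,i}, d_{r,i} \geq \frac{m}{(b-1)b^i}$; also one needs the convention that balls of non-positive radius are empty so that terms with $\psi_A(i) \leq \frac{m}{(b-1)b^i}$ genuinely contribute nothing and are correctly excluded from the sum. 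The subdivision-when-$r_i > \rho$ step, invoking monotonicity of $r^{-\gamma}f(r)$, is the one place where a small computation is needed, but it is routine and, as noted, is actually vacuous for all but finitely many $i$.
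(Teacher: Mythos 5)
Your proposal is correct and follows essentially the same route as the paper: rewrite $W_{b,A}(\psi)\cap C(b,D)$ as a limsup over $i\in I$ of the sets $\bigcup_{p}B(p/b^{i},\psi_{A}(i))\cap C(b,D)$, apply \cref{lem:intersection} to replace each nonempty intersection by at most two balls of radius at most $\psi_{A}(i)-\frac{m}{(b-1)b^{i}}$ centered at the $O(b^{i\gamma})$ endpoints in $L_{i}\cup R_{i}$, and bound $\mathcal{H}^{f}_{\rho}$ by the tail of the convergent series. The only cosmetic differences are a slip of notation ($B(p/t^{a_n},\psi(n))$ should read $B(p/b^{a_n},\psi(n))$ in this $t=b$ setting) and the subdivision device for $r_i>\rho$, which, as you correctly observe, is vacuous once $i_0$ is large.
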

\begin{proof}
For $i \in I$, let
\begin{align*}
S_{i} &= \bigcup_{0 \leq p \leq b^{i}} B\left(\frac{p}{b^{i}},\psi_{A}(i)\right) \cap C(b,D).
\end{align*}
For each $0 \leq p \leq b^{i}$, we have 
$$B\left(\frac{p}{b^{i}},\psi_{A}(i)\right) = \bigcup_{n \colon a_{n}=i} B\left(\frac{p}{b^{a_{n}}},\psi(n)\right),$$
since all balls on the right side have the same center and $\psi_{A}(i)$ is the maximum of their radii. So
\begin{align*}
S_{i} = \bigcup_{a_{n}=i} \bigcup_{0 \leq p \leq b^{i}} B\left(\frac{p}{b^{a_{n}}},\psi(n)\right) \cap C(b,D).
\end{align*}
Therefore 
\begin{align*}
W_{b,A}(\psi) \cap C(b,D) &= \limsup_{n \to \infty} \bigcup_{0 \leq p \leq b^{a_{n}}} B\left(\frac{p}{b^{a_{n}}},\psi(n)\right) \cap C(b,D) \\
&= \limsup_{i \to \infty} S_{i}.
\end{align*} 
Lemma \ref{lem:intersection} implies that $S_{i}$ is a subset of 
\begin{align*}
\bigcup_{\frac{p}{b^{i}} \in L_{i} \cup R_{i}} B\left(\frac{p}{b^{i}} + d_{l,i},\psi_{A}(i)-d_{l,i}\right) \cup B\left(\frac{p}{b^{i}} - d_{r,i},\psi_{A}(i)-d_{r,i}\right),
\end{align*}
where $$d_{l,i} = \frac{m_{l}}{(b-1)b^i} \quad \text{ and } \quad d_{r,i} = \frac{m_{r}}{(b-1)b^i}.$$
Recall that $m = \min\{m_{l}, m_{r}\}$, so $$f(\psi_{A}(i)-d_{l,i}) + f(\psi_{A}(i)-d_{r,i}) \leq 2 f\left(\psi_{A}(i)-\frac{m}{(b-1)b^i}\right).$$
Then for any $\rho > 0$ and any integer $i_{0}$ big enough, we have 
\begin{align*}
& \mathcal{H}^{f}_{\rho}(W_{b,A}(\psi) \cap C(b,D)) \\
\ll& \sum_{i \geq i_{0} \colon i \in I} \mathcal{H}^{f}_{\rho}(S_{i}) \\
\ll& \sum_{i \geq i_{0} \colon i \in I, \psi_{A}(i) > \frac{m}{(b-1)b^i}} f\left(\psi_{A}(i)-\frac{m}{(b-1)b^i}\right) \times \#(L_{i} \cup R_{i}) \\
\ll& \sum_{i \geq i_{0} \colon i \in I, \psi_{A}(i) > \frac{m}{(b-1)b^i}} f\left(\psi_{A}(i)-\frac{m}{(b-1)b^i}\right) \times b^{i \gamma}.
\end{align*}
Let $i_{0} \to \infty$ and then $\rho \to 0$, we deduce $\mathcal{H}^{f}(W_{b,A}(\psi) \cap C(b,D)) = 0$.
\end{proof}

Now we turn to the more difficult divergence part. We are going to use Lemma \ref{lem:intersection} to rewrite balls $B(p b^{-n},\psi(n))$ as balls with center in $C(b,D)$. Note that the new balls could have different radii depending on whether $p b^{-n}$ is a left or right endpoint of $C_{n}(b,D)$, and we will deal with these two cases separately. For any $i \geq 1$, let
\begin{align*}
L_{i}^{\ast} &= \left\{\frac{p}{b^{i}} \in L_{i} \colon \frac{p}{b^{i}} + d_{l,i} \neq \frac{q}{b^{j}} + d_{l,j} \text{ for any } q \text{ and } j<i \right\}, \\
LS_{i}^{\ast} &= \bigcup_{\frac{p}{b^{i}} \in L_{i}^{\ast}} B\left(\frac{p}{b^{i}}+d_{l,i},\psi_{A}(i) - d_{l,i}\right),
\end{align*}
and $LW_{b,A}^{\ast}(\psi) = \limsup_{i \to \infty} LS_{i}^{\ast}$. Replacing $L_{i}$ by $R_{i}$, the set $RW_{b,A}^{\ast}(\psi)$ is defined in a similar way, and Lemma \ref{lem:intersection} implies that
\begin{align} \label{eq:union}
LW_{b,A}^{\ast}(\psi) \cup RW_{b,A}^{\ast}(\psi) \subset W_{b,A}(\psi).
\end{align}

Let $\mu = \mathcal{H}^{\gamma}\vert_{C(b,D)}$, $B$ be an arbitrary ball with center in $C(b,D)$ and 
\begin{align*}
LS_{i}^{\ast}(B) &= B \cap LS_{i}^{\ast}.
\end{align*}
Recall that $\mu$ is $\gamma$-Ahlfors regular, so there exists a constant $r_{0}>0$ such that for any ball $B(x,r_{1})$ with $x \in C(b,D)$ and $r_{1}<r_{0}$, we have 
\begin{align} \label{eq:ball}
\mu(B(x,r_{1})) \asymp r_{1}^{\gamma}.
\end{align}

Suppose the radius of $B$ satisfies that $r(B) < r_{0}/2$, so (\ref{eq:ball}) implies that $\mu(2B) \asymp \mu(B)$. For ease of notation, we write
\begin{align*}
B_{i}^{\ast}(\psi_{A}) :=& B\left(\frac{p}{b^{i}} + d_{l,i},\psi_{A}(i)-d_{l,i}\right) \hspace{1pt} \text{ and } \hspace{1pt} B_{i}^{\ast} := B\left(\frac{p}{b^{i}} + d_{l,i},\frac{1}{2 b^{i}}\right)
\end{align*} 
when the value of $p$ is unimportant. Then
\begin{align*}
&\#\{B^{\ast}_{i}(\psi_{A}) \subset B \colon B^{\ast}_{i}(\psi_{A}) \cap C(b,D) \neq \emptyset\} \\
\asymp &\#\{B^{\ast}_{i} \subset B \colon B^{\ast}_{i} \cap C(b,D) \neq \emptyset\} \\
\asymp &\frac{\mu(B)}{\mu(B^{\ast}_{i})} \quad \text{ since $B^{\ast}_{i}$ are disjoint,} \\
\asymp &\mu(B)b^{i \gamma}.
\end{align*}
Therefore
\begin{align} \label{eq:mu_LS}
\mu (LS_{i}^{\ast}(B)) \asymp \mu(B) b^{i \gamma} \mu(B^{\ast}_{i}(\psi_{A})) \asymp \mu(B) b^{i \gamma} (\psi_{A}(i)-d_{l,i})^{\gamma}.
\end{align}

Next we show that the $\mu(LS_{i}^{\ast}(B))$ satisfies a quasi-independence relation.

\begin{lem}\label{lem:quasi-independence}
Suppose $\psi_{A}(i) \leq b^{-i}/2$ for all $i$. Let $t_{0}$ be a sufficiently large integer satisfying $b^{-t_{0}} < r(B)$. Then there exists a constant $K>0$ such that for any $i>j>t_{0}$, 
\begin{align*}
\mu(LS_{i}^{\ast}(B) \cap LS_{j}^{\ast}(B)) \leq \frac{K}{\mu(B)} \mu(LS_{i}^{\ast}(B)) \mu(LS_{j}^{\ast}(B)).
\end{align*}
\end{lem}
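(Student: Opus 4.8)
The plan is to estimate $\mu(LS_{i}^{\ast}(B) \cap LS_{j}^{\ast}(B))$ by fixing the coarser scale $j$ and counting, inside each ball $B_{j}^{\ast}(\psi_{A})$ of the $j$-th generation that meets $C(b,D)$, how many balls $B_{i}^{\ast}(\psi_{A})$ of the $i$-th generation can contribute. First I would note that $LS_{i}^{\ast}(B)$ is contained in the union of the balls $B_{i}^{\ast}(\psi_{A}) \subset B$ with $B_{i}^{\ast}(\psi_{A}) \cap C(b,D) \neq \emptyset$, and similarly for $j$; moreover each such $B_{i}^{\ast}(\psi_{A})$ is contained in the concentric ball $B_{i}^{\ast}$ of radius $\tfrac{1}{2b^{i}}$, and the $B_{i}^{\ast}$ (for fixed $i$) are pairwise disjoint — this disjointness is exactly what was used to derive \eqref{eq:mu_LS}. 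So $LS_{i}^{\ast}(B) \cap LS_{j}^{\ast}(B)$ is covered by those $B_{i}^{\ast}(\psi_{A})$ that lie within distance $\tfrac{1}{2b^{i}} + (\psi_{A}(j) - d_{l,j}) \leq b^{-j}$ of some center of an admissible $B_{j}^{\ast}(\psi_{A})$; since $b^{-j}$ is comparable to the radius of $B_{j}^{\ast}$, each admissible $B_{j}^{\ast}(\psi_{A})$ only interacts with the $B_{i}^{\ast}(\psi_{A})$ contained in a bounded dilate of $B_{j}^{\ast}$.

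Next I would count, for a single admissible ball $B_{j}^{\ast}$ (radius $\tfrac{1}{2b^{j}}$), the number of admissible $B_{i}^{\ast}$ inside a fixed dilate of it: by the same Ahlfors-regularity computation as before (comparing $\mu$-masses of disjoint balls with centers in $C(b,D)$, using \eqref{eq:ball} and $i > j$), this count is
\begin{align*}
\#\{B_{i}^{\ast} \subset cB_{j}^{\ast} \colon B_{i}^{\ast} \cap C(b,D) \neq \emptyset\} \asymp \frac{\mu(B_{j}^{\ast})}{\mu(B_{i}^{\ast})} \asymp \frac{b^{i\gamma}}{b^{j\gamma}}.
\end{align*}
Each contributes $\mu$-mass $\asymp (\psi_{A}(i) - d_{l,i})^{\gamma}$, so the total mass inside one admissible $B_{j}^{\ast}$ is $\ll b^{(i-j)\gamma}(\psi_{A}(i) - d_{l,i})^{\gamma}$. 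Summing over the $\asymp \mu(B) b^{j\gamma}$ admissible $B_{j}^{\ast}$'s contained in $B$, and then noting that only those $B_{j}^{\ast}$ actually meeting $LS_{j}^{\ast}(B)$ carry mass — equivalently, invoking \eqref{eq:mu_LS} which gives $\mu(LS_{j}^{\ast}(B)) \asymp \mu(B) b^{j\gamma}(\psi_{A}(j) - d_{l,j})^{\gamma}$ — I get
\begin{align*}
\mu(LS_{i}^{\ast}(B) \cap LS_{j}^{\ast}(B)) \ll \mu(B) b^{j\gamma} (\psi_{A}(j) - d_{l,j})^{\gamma} \cdot b^{(i-j)\gamma}(\psi_{A}(i) - d_{l,i})^{\gamma}.
\end{align*}
Rewriting $b^{(i-j)\gamma} = b^{i\gamma} \cdot b^{-j\gamma}$ and comparing with \eqref{eq:mu_LS} applied at both scales $i$ and $j$, the right-hand side is
\begin{align*}
\asymp \frac{1}{\mu(B)} \cdot \bigl(\mu(B) b^{j\gamma}(\psi_{A}(j)-d_{l,j})^{\gamma}\bigr)\bigl(\mu(B) b^{i\gamma}(\psi_{A}(i)-d_{l,i})^{\gamma}\bigr) \asymp \frac{1}{\mu(B)}\mu(LS_{i}^{\ast}(B))\mu(LS_{j}^{\ast}(B)),
\end{align*}
which is the claimed inequality with an absolute implied constant $K$ (the constants are uniform because $r(B) < r_{0}/2$ forces $\mu(2B) \asymp \mu(B)$, and $b^{-t_{0}} < r(B)$ ensures all the generation-$j$, $j > t_{0}$, balls genuinely sit at a scale finer than $B$).

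The main obstacle I expect is the geometric bookkeeping in the first step: making precise that $LS_{i}^{\ast}(B) \cap LS_{j}^{\ast}(B)$ is genuinely confined to a \emph{bounded} number of generation-$i$ balls per generation-$j$ ball, i.e. controlling the ``fattening'' caused by the fact that the radii $\psi_{A}(i) - d_{l,i}$ and $\psi_{A}(j) - d_{l,j}$ are smaller than $\tfrac{1}{2b^{i}}$ and $\tfrac{1}{2b^{j}}$ but the centers $\tfrac{p}{b^{i}} + d_{l,i}$ are shifted off the dyadic-type grid. The hypothesis $\psi_{A}(i) \leq b^{-i}/2$ is what keeps each $B_{i}^{\ast}(\psi_{A})$ inside its companion $B_{i}^{\ast}$ of radius $\tfrac{1}{2b^{i}}$, so that the disjointness used in \eqref{eq:mu_LS} is available; I would lean on this throughout. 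A secondary technical point is that if $\psi_{A}(i) \le d_{l,i}$ then $LS_{i}^{\ast}$ is empty (by our convention on balls of non-positive radius) and the inequality is trivial, so one may assume $\psi_{A}(i) > d_{l,i}$ and $\psi_{A}(j) > d_{l,j}$ throughout, which is exactly when \eqref{eq:mu_LS} is a genuine two-sided estimate.
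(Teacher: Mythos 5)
Your overall strategy (fix the coarser scale $j$, count generation-$i$ balls per generation-$j$ ball, then sum) matches the paper's, but the central counting step has a gap which, as written, yields only the trivial bound. You count the admissible $B_{i}^{\ast}$ inside a dilate of the grid ball $B_{j}^{\ast}$ of radius $\tfrac{1}{2b^{j}}$, getting $\asymp b^{(i-j)\gamma}$ of them and hence mass $\ll b^{(i-j)\gamma}(\psi_{A}(i)-d_{l,i})^{\gamma}$ per $j$-ball; summing over the $\asymp \mu(B)b^{j\gamma}$ admissible $j$-balls gives $\mu(B)b^{i\gamma}(\psi_{A}(i)-d_{l,i})^{\gamma} \asymp \mu(LS_{i}^{\ast}(B))$, i.e.\ nothing better than $\mu(LS_{i}^{\ast}(B)\cap LS_{j}^{\ast}(B)) \leq \mu(LS_{i}^{\ast}(B))$. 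The extra factor $(\psi_{A}(j)-d_{l,j})^{\gamma}$ you then insert is not justified by \enquote{only those $B_{j}^{\ast}$ meeting $LS_{j}^{\ast}(B)$ carry mass}: every admissible $B_{j}^{\ast}$ is concentric with a ball $B_{j}^{\ast}(\psi_{A})$ whose center $p b^{-j}+d_{l,j}$ lies in $C(b,D)$, so there is no reduction in the count of $j$-balls. The factor has to come instead from counting only those generation-$i$ balls that meet the \emph{small} ball $B_{j}^{\ast}(\psi_{A})$ of radius $\psi_{A}(j)-d_{l,j}$, which is where $LS_{j}^{\ast}$ actually lives; that count is $\ll 2 + (\psi_{A}(j)-d_{l,j})^{\gamma}b^{i\gamma}$, and this is the estimate the paper uses.

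The corrected count exposes a second issue you do not address: the additive constant $2$ dominates precisely when $\psi_{A}(j)-d_{l,j} \leq b^{-i}/2$, and in that regime the desired product bound cannot be obtained by counting alone. The paper disposes of this case separately with an arithmetic separation argument: since $b-1$ divides $b^{i-j}-1$, any two distinct shifted centers $p_{1}b^{-i}+d_{l,i}$ and $p_{2}b^{-j}+d_{l,j}$ (distinct by the definition of $L_{i}^{\ast}$) are at distance at least $b^{-i}$, so when both radii are at most $b^{-i}/2$ the balls are disjoint and $\mu(LS_{i}^{\ast}(B)\cap LS_{j}^{\ast}(B))=0$. Your closing remark about the centers being shifted off the grid gestures at this difficulty, but you supply neither the divisibility fact nor the case split, and both are essential to closing the argument.
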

\begin{proof}
We first consider the case $\psi_{A}(j) - d_{l,j} \leq b^{-i}/2$. Let $p_{1} b^{-i} \in L_{i}^{\ast}$ and $p_{2} b^{-j} \in L_{j}^{\ast}$. By the definition of $L_{i}^{\ast}$, the two ball centers $p_{1} b^{-i} + d_{l,i}$ and $p_{2} b^{-j} + d_{l,j}$ are distinct. Recall that $$d_{l,i} = \frac{m_{l}}{(b-1)b^{i}},$$ so the distance between the two centers is 
\begin{align*}
\abs{\frac{p_{1}}{b^{i}}+d_{l,i} - \frac{p_{2}}{b^{j}}-d_{l,j}} &= \abs{\frac{p_{1}}{b^{i}}-\frac{m_{l}}{(b-1)b^{i}} - \frac{p_{2}}{b^{j}}+\frac{m_{l}}{(b-1)b^{j}}} \\
&= \abs{\frac{p_{1} - p_{2} b^{i-j}}{b^{i}} - \frac{m_{l} (b^{i-j} - 1)}{(b-1)b^{j}}} \\
&\geq \frac{1}{b^{i}}
\end{align*}
since $b-1$ divides $b^{i-j} - 1$. Note that both radii $\psi_{A}(i) - d_{l,i}$ and $\psi_{A}(j) - d_{l,j}$ are not greater than $b^{-i}/2$, hence
$$B\left(\frac{p_{1}}{b^{i}} + d_{l,i},\psi_{A}(i)-d_{l,i}\right) \cap B\left(\frac{p_{2}}{b^{j}}+d_{l,j},\psi_{A}(j)-d_{l,j}\right) = \emptyset.$$
Therefore
$$LS_{i}^{\ast}(B) \cap LS_{j}^{\ast}(B) = \emptyset$$
and thus $$\mu(LS_{i}^{\ast}(B) \cap LS_{j}^{\ast}(B)) = 0 \leq \frac{K}{\mu(B)} \mu(LS_{i}^{\ast}(B)) \mu(LS_{j}^{\ast}(B))$$ for any constant $K>0$.

Now assume $\psi_{A}(j) - d_{l,j} > b^{-i}/2$. We have
\begin{align}
&\mu(LS_{i}^{\ast}(B) \cap LS_{j}^{\ast}(B)) \nonumber \\
= &\mu \left(LS_{i}^{\ast}(B) \cap B \cap \bigcup_{b p^{-j} \in L_{j}^{\ast}} B\left(\frac{p}{b^{j}},\psi_{A}(j)\right)\right) \nonumber\\
\leq &\mathcal{N}(j) \mu\left(LS_{i}^{\ast}(B) \cap B_{j}^{\ast}(\psi_{A})\right), \label{eq:quasi_1}
\end{align}
where $$\mathcal{N}(j) = \#\left\{B_{j}^{\ast}(\psi_{A}) \colon B_{j}^{\ast}(\psi_{A}) \cap B \cap C(b,D) \neq \emptyset\right\}.$$
Let $2B$ denotes the ball with same center as $B$ but twice the radius. Since $2\psi_{A}(j) \leq b^{-j} \leq b^{-t_{0}} < r(B)$, we have 
\begin{align}
\mathcal{N}(j) \leq& \#\left\{B_{j}^{\ast}(\psi_{A}) \subset 2B \colon B_{j}^{\ast}(\psi_{A}) \cap C(b,D) \neq \emptyset\right\} \nonumber\\
\leq&  \#\left\{B_{j}^{\ast} \subset 2B \colon B_{j}^{\ast} \cap C(b,D) \neq \emptyset\right\} \nonumber\\
\leq& \frac{\mu(2B)}{\mu(B_{j}^{\ast})} \quad \text{because $B_{j}^{\ast}$ are disjoint,} \nonumber\\
\ll& \mu(B) b^{j \gamma} \quad \text{ (by $\gamma$-Ahlfors regularity)}. \label{eq:quasi_2}
\end{align}
Similarly, for any fixed $j$,
\begin{align*}
& \#\left\{B_{i}^{\ast}(\psi_{A}) \colon B_{i}^{\ast}(\psi_{A}) \cap B_{j}^{\ast}(\psi_{A}) \cap C(b,D) \neq \emptyset\right\} \\
\leq& \#\left\{B_{i}^{\ast} \colon B_{i}^{\ast} \cap B_{j}^{\ast}(\psi_{A}) \cap C(b,D) \neq \emptyset\right\} \\
\leq& 2 + \frac{\mu(B_{j}^{\ast}(\psi_{A}))}{\mu(B_{i}^{\ast})} \\
\ll& 2 + (\psi_{A}(j)-d_{l,j})^{\gamma} b^{i \gamma},
\end{align*}
where the number $2$ is for the possible existence of those $B_{i}^{\ast}$ which intersect with but are not contained in $B_{j}^{\ast}(\psi_{A})$. Then
\begin{align}
&\mu\left(LS_{i}^{\ast}(B) \cap B_{j}^{\ast}(\psi_{A})\right) \nonumber \\
\ll &\mu(B_{i}^{\ast}(\psi_{A})) (2 + (\psi_{A}(j)-d_{l,j})^{\gamma} b^{i \gamma}) \nonumber\\
\ll &(\psi_{A}(i)-d_{l,i})^{\gamma} + (\psi_{A}(i)-d_{l,i})^{\gamma} (\psi_{A}(j)-d_{l,j})^{\gamma} b^{i \gamma} \nonumber\\
\ll &(\psi_{A}(i)-d_{l,i})^{\gamma} (\psi_{A}(j)-d_{l,j})^{\gamma} b^{i \gamma} \label{eq:quasi_3}
\end{align}
since $(\psi_{A}(j)-d_{l,j}) b^{i} > 1/2$. 

Now \eqref{eq:mu_LS}, \eqref{eq:quasi_1}, \eqref{eq:quasi_2} and \eqref{eq:quasi_3} give that
\begin{align*}
\mu(LS_{i}^{\ast}(B) \cap LS_{j}^{\ast}(B)) \ll& \mu(B) b^{j \gamma} (\psi_{A}(i)-d_{l,i})^{\gamma} (\psi_{A}(j)-d_{l,j})^{\gamma} b^{i \gamma} \\
\ll& \frac{\mu(LS_{i}^{\ast}(B)) \mu(LS_{j}^{\ast}(B))}{\mu(B)}.
\end{align*}
\end{proof}

\begin{prop} \label{mu_case}
Let $\mu = \mathcal{H}^{\gamma}\vert_{C(b,D)}$. If $\psi_{A}(i) \leq b^{-i}/2$ for all $i \in I$ and
$$\sum\limits_{i \in I \colon \psi_{A}(i) > d_{l,i}} \left(\psi_{A}(i) - d_{l,i}\right)^{\gamma} b^{i \gamma} = \infty,$$
then
\begin{align*}
\mu(LW_{b,A}^{\ast}(\psi)) = \mu(C(b,D)).
\end{align*}
\end{prop}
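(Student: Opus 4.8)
The plan is to apply the divergence machinery already assembled: \cref{lem:Div_BC} (in the form \eqref{eq:Div_BC}) together with the quasi-independence estimate of \cref{lem:quasi-independence}, and then to upgrade from ``positive measure in every ball'' to ``full measure'' via \cref{lem:full_measure}. Concretely, fix an arbitrary ball $B$ with center in $C(b,D)$ and radius $r(B) < r_{0}/2$, and work with the sets $LS_{i}^{\ast}(B) = B \cap LS_{i}^{\ast}$ for $i \in I$, $i > t_{0}$, where $t_{0}$ is chosen as in \cref{lem:quasi-independence}. By \eqref{eq:mu_LS} we have $\mu(LS_{i}^{\ast}(B)) \asymp \mu(B) b^{i\gamma}(\psi_{A}(i) - d_{l,i})^{\gamma}$ for those $i \in I$ with $\psi_{A}(i) > d_{l,i}$ (and $LS_{i}^{\ast}(B)$ is essentially empty otherwise), so the divergence hypothesis $\sum_{i \in I,\, \psi_{A}(i) > d_{l,i}} (\psi_{A}(i) - d_{l,i})^{\gamma} b^{i\gamma} = \infty$ translates directly into $\sum_{i \in I} \mu(LS_{i}^{\ast}(B)) = \infty$.

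Next I would feed this into \eqref{eq:Div_BC}. The numerator is $\bigl(\sum_{i \in I, i \le Q} \mu(LS_{i}^{\ast}(B))\bigr)^2$. For the denominator, split the double sum over $s, t \in I$, $s,t \le Q$ into the diagonal-type terms ($s = t$, or more conveniently the terms with, say, $s \le t$ handled by the trivial bound $\mu(LS_{s}^{\ast}(B) \cap LS_{t}^{\ast}(B)) \le \mu(LS_{s}^{\ast}(B))$ when $s=t$) and the off-diagonal terms $s \ne t$, where \cref{lem:quasi-independence} gives $\mu(LS_{i}^{\ast}(B) \cap LS_{j}^{\ast}(B)) \le \frac{K}{\mu(B)} \mu(LS_{i}^{\ast}(B)) \mu(LS_{j}^{\ast}(B))$. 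Hence
\begin{align*}
\sum_{\substack{s,t \le Q \\ s,t \in I}} \mu(LS_{s}^{\ast}(B) \cap LS_{t}^{\ast}(B)) \le \sum_{\substack{s \le Q \\ s \in I}} \mu(LS_{s}^{\ast}(B)) + \frac{2K}{\mu(B)} \Bigl(\sum_{\substack{s \le Q \\ s \in I}} \mu(LS_{s}^{\ast}(B))\Bigr)^{2}.
\end{align*}
Writing $\Sigma_{Q} = \sum_{s \le Q,\, s \in I} \mu(LS_{s}^{\ast}(B))$, the right-hand side is $\Sigma_{Q} + \frac{2K}{\mu(B)} \Sigma_{Q}^{2}$, so \eqref{eq:Div_BC} yields $\mu(\limsup_{i} LS_{i}^{\ast}(B)) \ge \limsup_{Q} \Sigma_{Q}^{2} / (\Sigma_{Q} + \frac{2K}{\mu(B)}\Sigma_{Q}^{2}) = \frac{\mu(B)}{2K}$, using $\Sigma_{Q} \to \infty$. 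Since $\limsup_{i \to \infty,\, i \in I} LS_{i}^{\ast}(B) \subset B \cap LW_{b,A}^{\ast}(\psi)$, this gives $\mu(B \cap LW_{b,A}^{\ast}(\psi)) \ge c_{0}\, \mu(B)$ with $c_{0} = 1/(2K)$ independent of $B$.

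Finally, since this lower bound holds for every ball $B$ with center in $C(b,D)$ and radius below $r_{0}/2$, and since $\mu = \mathcal{H}^{\gamma}\vert_{C(b,D)}$ satisfies the doubling condition $\mu(B(x,2r)) \ll \mu(B(x,r))$ by $\gamma$-Ahlfors regularity, \cref{lem:full_measure} applies (with $E = LW_{b,A}^{\ast}(\psi) \cap C(b,D)$, $r_{0}/2$ in place of $r_{0}$, and $c_{0} = 1/(2K)$) and gives $\mu(LW_{b,A}^{\ast}(\psi)) = \mu(C(b,D))$, which is the claim. I expect the main obstacle to be bookkeeping rather than conceptual: one must be careful that the terms with $\psi_{A}(i) \le d_{l,i}$ (where $LS_{i}^{\ast}(B)$ is empty or the ball has non-positive radius) are simply discarded from all sums without affecting divergence, that $\mu(LS_{i}^{\ast}(B)) \asymp \mu(B) b^{i\gamma}(\psi_{A}(i)-d_{l,i})^{\gamma}$ genuinely requires $r(B)$ small enough that $B$ contains $\asymp \mu(B) b^{i\gamma}$ of the disjoint balls $B_{i}^{\ast}$ (true once $b^{-i} < r(B)$, i.e. for $i > t_{0}$, so only finitely many $i$ are lost), and that the comparison constants absorbed into $\asymp$ and into $K$ do not depend on $B$ — which is exactly what $\gamma$-Ahlfors regularity guarantees. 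One should also note that the contribution of the finitely many $i \le t_{0}$ is irrelevant for a $\limsup$ set, so restricting to $i > t_{0}$ throughout is harmless.
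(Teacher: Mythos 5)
Your proposal is correct and follows essentially the same route as the paper: the paper's (much terser) proof likewise combines \eqref{eq:mu_LS}, the quasi-independence estimate of \cref{lem:quasi-independence} and the divergence Borel--Cantelli inequality \eqref{eq:Div_BC} to get $\mu(B\cap LW_{b,A}^{\ast}(\psi))\gg\mu(B)$ uniformly over small balls $B$ centered in $C(b,D)$, and then concludes with \cref{lem:full_measure}. Your additional bookkeeping (splitting diagonal from off-diagonal terms, discarding the indices with $\psi_{A}(i)\leq d_{l,i}$, and checking uniformity of the constants in $B$) is exactly the detail the paper leaves implicit.
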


\begin{proof}
Let $t_{0}$ be the number as in Lemma \ref{lem:quasi-independence}. For all $i>t_{0}$, \eqref{eq:Div_BC}, \eqref{eq:mu_LS} and Lemma \ref{lem:quasi-independence} imply that $$\mu (\limsup_{i \to \infty, i \in I} S_{i}^{\ast}(B)) \geq \frac{\mu(B)}{C}.$$
Applying Lemma \ref{lem:full_measure} and noting that $$\limsup_{i \to \infty, i \in I} LS_{i}^{\ast}(B) = B \cap \limsup_{i \to \infty, i \in I} LS_{i}^{\ast} = B \cap LW_{b,A}^{\ast}(\psi),$$
we have $\mu(LW_{b,A}^{\ast}(\psi)) = \mu(C(b,D))$.
\end{proof}

Now we extends to $f$-Hausdorff measure by applying the mass transference principle (\cref{thm:MTP}). 

\begin{lem} \label{lem:left}
Let $f$ be a dimension function such that $r^{-\gamma} f(r)$ is monotonic. If $\psi_{A}(i) < b^{-i}/2$ for all $i \in I$ and
$$\sum\limits_{i \in I \colon \psi_{A}(i) > d_{l,i}} f\left(\psi_{A}(i) - d_{l,i}\right) b^{i \gamma} = \infty,$$
then
\begin{align*}
\mathcal{H}^{f}(LW_{b,A}^{\ast}(\psi) \cap C(b,D)) = \mathcal{H}^{f}(C(b,D)).
\end{align*}
\end{lem}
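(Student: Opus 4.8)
The plan is to derive \cref{lem:left} from the mass transference principle (\cref{thm:MTP}), using \cref{mu_case} to verify its hypothesis. First I would dispose of two harmless special cases. The balls indexed by those $i\in I$ with $\psi_{A}(i)\leq d_{l,i}$ are empty (by the convention on non-positive radii) and may be deleted without changing $LW_{b,A}^{\ast}(\psi)$; and if $m_{l}\geq(b-1)/2$ then $d_{l,i}=\tfrac{m_{l}}{(b-1)b^{i}}\geq b^{-i}/2>\psi_{A}(i)$ for every $i$ by hypothesis, so the divergent series in \cref{lem:left} is actually empty and there is nothing to prove. Hence I may assume $m_{l}<(b-1)/2$, which forces $\eta_{i}:=b^{-i}/2-d_{l,i}$ to satisfy $0<\eta_{i}\asymp b^{-i}$, and henceforth all sums run over $I'=\{i\in I\colon\psi_{A}(i)>d_{l,i}\}$ (an infinite set, since the hypothesised series diverges).

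Next I would package the balls for the mass transference principle. Let $(B_{n})_{n\geq1}$ be an enumeration of the balls $B\bigl(\tfrac{p}{b^{i}}+d_{l,i},\,\psi_{A}(i)-d_{l,i}\bigr)$ over $i\in I'$ and $\tfrac{p}{b^{i}}\in L_{i}^{\ast}$; their centres lie in $C(b,D)$, their radii are at most $b^{-i}/2\to0$, and since only finitely many balls occur at each level $i$, a point lies in infinitely many $B_{n}$ exactly when it lies in $LS_{i}^{\ast}$ for infinitely many $i$, so $\limsup_{n\to\infty}B_{n}=LW_{b,A}^{\ast}(\psi)$. At level $i$ the dilated ball $B_{n}^{f}$ has the same centre and radius $f\bigl(\psi_{A}(i)-d_{l,i}\bigr)^{1/\gamma}$. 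Since $\mu=\mathcal{H}^{\gamma}\vert_{C(b,D)}$ is $\gamma$-Ahlfors regular, \cref{thm:MTP} reduces the lemma to showing that $\limsup_{n\to\infty}B_{n}^{f}$ has full $\mu$-measure in $C(b,D)$.

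To establish that, I would truncate: put $\rho_{i}=\min\bigl\{f(\psi_{A}(i)-d_{l,i})^{1/\gamma},\,\eta_{i}\bigr\}$, so that $\rho_{i}+d_{l,i}\leq b^{-i}/2$. Applying \cref{mu_case} to the function $\psi'$ with $\psi'_{A}(i)=\rho_{i}+d_{l,i}$ (which leaves the ball centres $\tfrac{p}{b^{i}}+d_{l,i}$ and the sets $L_{i}^{\ast}$ unchanged, as these do not depend on the function) is then legitimate once
\begin{align*}
\sum_{i\in I'}\rho_{i}^{\gamma}\,b^{i\gamma}=\sum_{i\in I'}\min\bigl\{f(\psi_{A}(i)-d_{l,i}),\,\eta_{i}^{\gamma}\bigr\}\,b^{i\gamma}=\infty,
\end{align*}
which holds by a short dichotomy: if $f(\psi_{A}(i)-d_{l,i})\leq\eta_{i}^{\gamma}$ for all large $i$ this is precisely the series in the hypothesis, and otherwise infinitely many terms are $\asymp\eta_{i}^{\gamma}b^{i\gamma}\asymp1$. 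Thus $\mu\bigl(\limsup_{i\to\infty}\bigcup_{p/b^{i}\in L_{i}^{\ast}}B(\tfrac{p}{b^{i}}+d_{l,i},\rho_{i})\bigr)=\mu(C(b,D))$ by \cref{mu_case}; since $\rho_{i}\leq f(\psi_{A}(i)-d_{l,i})^{1/\gamma}$ this set is contained in $\limsup_{n\to\infty}B_{n}^{f}$, which therefore also has full $\mu$-measure. The mass transference principle now gives $\mathcal{H}^{f}(B\cap LW_{b,A}^{\ast}(\psi))=\mathcal{H}^{f}(B)$ for every ball $B$ in $C(b,D)$, and taking $B$ large enough to contain $C(b,D)$ finishes the proof.

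I expect the only genuinely delicate point to be the truncation $\rho_{i}=\min\{f(\psi_{A}(i)-d_{l,i})^{1/\gamma},\eta_{i}\}$: \cref{mu_case} insists on radii bounded by $b^{-i}/2$, whereas $f(r)^{1/\gamma}$ need not be comparable with $r$, so one must pass to the smaller $f$-dilated balls and then verify, through the dichotomy above, that divergence of the series is preserved. Everything else is bookkeeping: the passage between the $\limsup$ sets $LW_{b,A}^{\ast}$ and the ball sequences required by \cref{thm:MTP}, and the observation that deleting the empty balls and the degenerate range $m_{l}\geq(b-1)/2$ costs nothing.
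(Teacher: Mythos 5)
Your proof is correct and follows essentially the same route as the paper: apply \cref{mu_case} to an auxiliary approximating function whose radii at level $i$ are $f(\psi_A(i)-d_{l,i})^{1/\gamma}+d_{l,i}$ (the paper's $\theta$), and then invoke the mass transference principle (\cref{thm:MTP}). Your truncation $\rho_i=\min\{f(\psi_A(i)-d_{l,i})^{1/\gamma},\eta_i\}$ together with the dichotomy argument is in fact slightly more careful than the paper's treatment, which applies \cref{mu_case} to $\theta$ without checking the required bound $\theta_A(i)\leq b^{-i}/2$.
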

\begin{proof}
Define a function $\theta: \N \to (0,\infty)$ by
\begin{align*}
\theta(n) = \begin{cases}
f(\psi(n) - d_{l,a_{n}})^{1/\gamma} + d_{l,a_{n}}, &\text{ if } \psi(n) > d_{l,a_{n}},\\
d_{l,a_{n}}/2, &\text{ otherwise.}
\end{cases}
\end{align*}
Let $$\theta_{A}(i) = \max \left\{\theta(n) \colon a_{n}=i \right\},$$ then
$$\sum_{i \in I \colon \theta_{A}(i) > d_{l,i}} \left(\theta_{A}(i)-d_{l,i}\right)^{\gamma} \times b^{i\gamma} = \sum_{i \in I \colon \psi_{A}(i) > d_{l,i}} f(\psi_{A}(i) - d_{l,i}) \times b^{i\gamma} = \infty.$$
Now Proposition \ref{mu_case} says that
$$\mu (LW_{b,A}^{\ast}(\theta)) = \mu(C(b,D)), $$
which is equivalent to 
$$\mathcal{H}^{\gamma} (LW_{b,A}^{\ast}(\theta) \cap C(b,D)) = \mathcal{H}^{\gamma}(C(b,D)).$$
Hence \cref{thm:MTP} implies that
$$\mathcal{H}^{f} (LW_{b,A}^{\ast}(\psi) \cap C(b,D)) = \mathcal{H}^{f}(C(b,D)).$$
\end{proof}

A similar result for $RW_{b,A}^{\ast}(\psi)$ can be proved by the same method.
\begin{lem} \label{lem:right}
Let $f$ be a dimension function such that $r^{-\gamma} f(r)$ is monotonic. If $\psi_{A}(i) < b^{-i}/2$ for all $i$ and
$$\sum\limits_{i \in I \colon \psi_{A}(i) > d_{r,i}} f\left(\psi_{A}(i) - d_{r,i}\right) b^{i \gamma} = \infty,$$
then
\begin{align*}
\mathcal{H}^{f}(RW_{b,A}^{\ast}(\psi)) = \mathcal{H}^{f}(C(b,D)).
\end{align*}
\end{lem}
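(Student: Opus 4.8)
The plan is to run the proof of \cref{lem:left} with left endpoints replaced by right endpoints throughout. For $i \geq 1$ I would set
$$R_{i}^{\ast} = \left\{\frac{p}{b^{i}} \in R_{i} \colon \frac{p}{b^{i}} - d_{r,i} \neq \frac{q}{b^{j}} - d_{r,j} \text{ for any } q \text{ and } j < i \right\}, \qquad RS_{i}^{\ast} = \bigcup_{p/b^{i} \in R_{i}^{\ast}} B\left(\frac{p}{b^{i}} - d_{r,i},\, \psi_{A}(i) - d_{r,i}\right),$$
so that $RW_{b,A}^{\ast}(\psi) = \limsup_{i \to \infty} RS_{i}^{\ast}$, and for a ball $B$ with centre in $C(b,D)$ and radius $r(B) < r_{0}/2$ set $RS_{i}^{\ast}(B) = B \cap RS_{i}^{\ast}$. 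Since each new centre $p b^{-i} - d_{r,i}$ lies in $C(b,D)$, the counting of pairwise disjoint balls $B(p b^{-i} - d_{r,i}, b^{-i}/2)$ carried out for \eqref{eq:mu_LS} applies verbatim and gives $\mu(RS_{i}^{\ast}(B)) \asymp \mu(B)\, b^{i\gamma}(\psi_{A}(i) - d_{r,i})^{\gamma}$.

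The core step is the right-endpoint version of \cref{lem:quasi-independence}, namely that for $i > j > t_{0}$ one has $\mu(RS_{i}^{\ast}(B) \cap RS_{j}^{\ast}(B)) \ll \mu(B)^{-1}\,\mu(RS_{i}^{\ast}(B))\,\mu(RS_{j}^{\ast}(B))$. The only point in the proof of \cref{lem:quasi-independence} that is sensitive to which shift is used is the separation of two distinct shifted centres: for $p_{1} b^{-i} \in R_{i}^{\ast}$ and $p_{2} b^{-j} \in R_{j}^{\ast}$ with $i > j$,
$$\left|\frac{p_{1}}{b^{i}} - d_{r,i} - \frac{p_{2}}{b^{j}} + d_{r,j}\right| = \left|\frac{p_{1} - p_{2}b^{i-j}}{b^{i}} + \frac{m_{r}(b^{i-j}-1)}{(b-1)b^{i}}\right| \geq \frac{1}{b^{i}},$$
because $b-1$ divides $b^{i-j}-1$, so $m_{r}(b^{i-j}-1)/(b-1)$ is an integer and the quantity inside the second modulus is a nonzero integer multiple of $b^{-i}$ — nonzero precisely because the two shifted centres are distinct, by the definition of $R_{i}^{\ast}$. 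Granted this separation, the case split according to whether $\psi_{A}(j) - d_{r,j} \leq b^{-i}/2$, the counting bound $\mathcal{N}(j) \ll \mu(B) b^{j\gamma}$ furnished by $\gamma$-Ahlfors regularity, and the nested-ball estimate all carry over unchanged. Plugging the quasi-independence relation and the estimate for $\mu(RS_{i}^{\ast}(B))$ into \eqref{eq:Div_BC} and then applying \cref{lem:full_measure} yields the right-endpoint analogue of \cref{mu_case}: if $\psi_{A}(i) \leq b^{-i}/2$ for all $i$ and $\sum_{i \in I \colon \psi_{A}(i) > d_{r,i}}(\psi_{A}(i) - d_{r,i})^{\gamma}\, b^{i\gamma} = \infty$, then $\mu(RW_{b,A}^{\ast}(\psi)) = \mu(C(b,D))$.

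Finally, to upgrade from $\mathcal{H}^{\gamma}$ to $\mathcal{H}^{f}$ I would introduce, exactly as in the proof of \cref{lem:left},
$$\theta(n) = \begin{cases} f(\psi(n) - d_{r,a_{n}})^{1/\gamma} + d_{r,a_{n}}, & \text{if } \psi(n) > d_{r,a_{n}}, \\ d_{r,a_{n}}/2, & \text{otherwise,} \end{cases}$$
and $\theta_{A}(i) = \max\{\theta(n) \colon a_{n} = i\}$, so that the hypothesis $\sum_{i \in I \colon \psi_{A}(i) > d_{r,i}} f(\psi_{A}(i) - d_{r,i})\, b^{i\gamma} = \infty$ turns into $\sum_{i \in I \colon \theta_{A}(i) > d_{r,i}} (\theta_{A}(i) - d_{r,i})^{\gamma}\, b^{i\gamma} = \infty$. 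Applying the right-endpoint analogue of \cref{mu_case} to $\theta$ gives $\mathcal{H}^{\gamma}(RW_{b,A}^{\ast}(\theta) \cap C(b,D)) = \mathcal{H}^{\gamma}(C(b,D))$, and then \cref{thm:MTP} with $\delta = \gamma$ and $\mu = \mathcal{H}^{\gamma}\vert_{C(b,D)}$ upgrades this to $\mathcal{H}^{f}(RW_{b,A}^{\ast}(\psi)) = \mathcal{H}^{f}(C(b,D))$. I do not expect any genuine obstacle beyond bookkeeping: as in \cref{lem:left}, all the difficulty is concentrated in the quasi-independence lemma, and the only thing to keep an eye on is that the divisibility step uses nothing about $m_{l}$ versus $m_{r}$ — it rests solely on $b-1 \mid b^{i-j}-1$ — so replacing the left shift by the right shift everywhere is harmless.
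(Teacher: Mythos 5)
Your proposal is correct and is exactly the argument the paper intends: the paper gives no separate proof of this lemma, stating only that it follows ``by the same method'' as \cref{lem:left}, and your expansion — swapping $L_i$, $d_{l,i}$, $LS_i^{\ast}$ for $R_i$, $d_{r,i}$, $RS_i^{\ast}$, rechecking the $1/b^i$ separation via $b-1 \mid b^{i-j}-1$, and then reusing the quasi-independence lemma, \eqref{eq:Div_BC}, \cref{lem:full_measure}, and \cref{thm:MTP} with the modified function $\theta$ — is precisely that method, correctly identifying the divisibility step as the only point needing verification.
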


Note that $m = \min\{m_{l}, m_{r}\}$, so $$\sum\limits_{i \in I \colon \psi_{A}(i) > \frac{m}{(b-1) b^{i }}} f\left(\psi_{A}(i) - \frac{m}{(b-1) b^{i}}\right) b^{i \gamma} = \infty,$$ implies either
\begin{align*}\sum\limits_{i \in I \colon \psi_{A}(i) > d_{l,i}} f\left(\psi_{A}(i) - d_{l,i}\right) b^{i \gamma} = \infty,\end{align*}
or
\begin{align*}
\sum\limits_{i \in I \colon \psi_{A}(i) > d_{r,i}} f\left(\psi_{A}(i) - d_{r,i}\right) b^{i \gamma} = \infty.
\end{align*}

Therefore the divergence part of \cref{thm:A} is a consequence of Lemma \ref{lem:left} and Lemma \ref{lem:right} since $LW_{b,A}^{\ast}(\psi) \cup RW_{b,A}^{\ast}(\psi) \subset W_{b,A}(\psi).$

\section{Proof of \cref{thm:main}} \label{sec:dependent}
For any prime divisor $q$ of $b$, we have
$$v_{q} (b^{\lfloor \alpha_{1} a_n \rfloor}) = \lfloor \alpha_{1} a_n \rfloor v_{q}(b) \leq \alpha_{1} a_n v_{q}(b) \leq a_n v_{q}(t) = v_{q}(t^{a_n}).$$
Hence $b^{\lfloor \alpha_{1} a_n \rfloor} \mid t^{a_{n}}$ and thus
\begin{align*} 
\bigcup_{0 \leq p \leq b^{\lfloor \alpha_{1} a_n \rfloor}} B\left(\frac{p}{b^{\lfloor \alpha_{1} a_n \rfloor}},\psi(n)\right) \subset \bigcup_{0 \leq p \leq t^{a_n}} B\left(\frac{p}{t^{a_n}},\psi(n)\right).
\end{align*}
Therefore
\begin{align} \label{eq:contain_main_l}
W_{b,(\lfloor \alpha_{1} a_n \rfloor)_{n \geq 1}}(\psi) \subset W_{t,(a_{n})_{n \geq 1}}(\psi).
\end{align}

Let $J_{1} = I(\{\lfloor i \alpha_{1} \rfloor\}_{i \in I})$ be the set of integers appearing in the sequence $\{\lfloor i \alpha_{1} \rfloor\}_{i \in I}$. Note that
\begin{align*}
\psi_{(\lfloor a_{n} \alpha_{1} \rfloor)_{n \geq 1}}(j) &= \max\left\{\psi(n) \colon \lfloor a_{n} \alpha_{1} \rfloor = j \right\} \\
&=\max\left\{\psi_{A}(i) \colon \lfloor i \alpha_{1} \rfloor = j \right\}.
\end{align*}
Then
\begin{align*}
& \sum\limits_{i \in I \colon \psi_{A}(i) > \frac{m}{(b-1) b^{\lfloor i \alpha_{1} \rfloor }}} f\left(\psi_{A}(i) - \frac{m}{(b-1) b^{\lfloor i \alpha_{1} \rfloor }}\right) b^{i \alpha_{1} \gamma} \\
= & \sum\limits_{j \in J_{1}} \sum_{\substack{i \colon \lfloor i \alpha_{1} \rfloor = j \\ \psi_{A}(i) > \frac{m}{(b-1) b^{\lfloor i \alpha_{1} \rfloor }}}} f\left(\psi_{A}(i) - \frac{m}{(b-1) b^{\lfloor i \alpha_{1} \rfloor }}\right) b^{i \alpha_{1} \gamma} \\
\leq &\sum_{\substack{j \in J_{1} \\ \psi_{(\lfloor a_{n} \alpha_{1} \rfloor)_{n \geq 1}}(j) > \frac{m}{(b-1) b^{j}}}} \sum_{i \colon \lfloor i \alpha_{1} \rfloor = j} f\left(\psi_{(\lfloor a_{n} \alpha_{1} \rfloor)_{n \geq 1}}(j) - \frac{m}{(b-1) b^{\lfloor i \alpha_{1} \rfloor }}\right) b^{i \alpha_{1} \gamma} \\
\ll &\sum_{j \in J_{1} \colon \psi_{(\lfloor a_{n} \alpha_{1} \rfloor)_{n \geq 1}}(j) > \frac{m}{(b-1) b^{j}}} f\left(\psi_{(\lfloor a_{n} \alpha_{1} \rfloor)_{n \geq 1}}(j)  - \frac{m}{(b-1) b^{j}}\right) b^{ j \gamma}.
\end{align*}

Therefore \cref{thm:A} and \eqref{eq:contain_main_l} imply that
\begin{align*}
\mathcal{H}^{f} (W_{t}(\psi) \cap C(b,D)) = \mathcal{H}^{f}(C(b,D))
\end{align*}
if
\begin{align*}
\sum\limits_{i \in I \colon \psi_{A}(i) > \frac{m}{(b-1) b^{\lfloor i \alpha_{1} \rfloor }}} f\left(\psi_{A}(i) - \frac{m}{(b-1) b^{\lfloor i \alpha_{1} \rfloor }}\right) b^{i \alpha_{1} \gamma} = \infty.
\end{align*}

The other half of the theorem is proved similarly using $\lceil \alpha_{2} a_n \rceil$. For any prime divisor $q$ of $b$, we have
$$v_{q} (b^{\lceil \alpha_{2} a_n \rceil}) = \lceil \alpha_{2} a_n \rceil v_{q}(b) \geq \alpha_{2} a_n v_{q}(b) \geq a_{n} v_{q}(t) = v_{q}(t^{a_{n}}),$$
which implies $t^{a_{n}} \mid b^{\lceil \alpha_{2} a_n \rceil}$. Therefore
\begin{align*} 
\bigcup_{0 \leq p \leq t^{a_n}} B\left(\frac{p}{t^{a_n}},\psi(a_n)\right) \subset \bigcup_{0 \leq p \leq b^{\lceil \alpha_{2} a_n \rceil}} B\left(\frac{p}{b^{\lceil \alpha_{2} a_n \rceil}},\psi(a_n)\right)
\end{align*}
and hence
\begin{align} \label{eq:contain_main_r}
W_{t,(a_{n})_{n \geq 1}}(\psi) \subset W_{b,(\lfloor \alpha_{2} a_n \rfloor)_{n \geq 1}}(\psi).
\end{align}
Let $J_{2} = I(\{\lceil i \alpha_{2} \rceil\}_{i \in I})$ be the set of integers appearing in $\{\lceil i \alpha_{2} \rceil\}_{i \in I}$. Then
\begin{align*}
& \sum\limits_{i \in I \colon \psi_{A}(i) > \frac{m}{(b-1) b^{\lceil i \alpha_{2} \rceil }}} f\left(\psi_{A}(i) - \frac{m}{(b-1) b^{\lceil i \alpha_{2} \rceil}}\right) b^{i \alpha_{2} \gamma} \\
= & \sum\limits_{j \in J_{2}} \sum_{\substack{i \colon \lceil i \alpha_{2} \rceil = j \\ \psi_{A}(i) > \frac{m}{(b-1) b^{\lceil i \alpha_{2} \rceil}}}} f\left(\psi_{A}(i) - \frac{m}{(b-1) b^{\lceil i \alpha_{2} \rceil}}\right) b^{i \alpha_{2} \gamma} \\
\geq &\sum_{j \in J_{2} \colon \psi_{(\lceil a_{n} \alpha_{2} \rceil)_{n \geq 1}}(j) > \frac{m}{(b-1) b^{j}}} f\left(\psi_{(\lceil a_{n} \alpha_{2} \rceil)_{n \geq 1}}(j) - \frac{m}{(b-1) b^{\lceil i \alpha_{2} \rceil }}\right) b^{i \alpha_{2} \gamma} \\
\gg &\sum_{j \in J_{2} \colon \psi_{(\lceil a_{n} \alpha_{2} \rceil)_{n \geq 1}}(j) > \frac{m}{(b-1) b^{j}}} f\left(\psi_{(\lceil a_{n} \alpha_{2} \rceil)_{n \geq 1}}(j) - \frac{m}{(b-1) b^{j}}\right) b^{j \gamma} \\
\end{align*}

Now \cref{thm:A} and \eqref{eq:contain_main_r} imply that
\begin{align*}
\mathcal{H}^{f} (W_{t}(\psi) \cap C(b,D)) = 0
\end{align*} 
if
\begin{align*}
\sum\limits_{i \in I \colon \psi_{A}(i) > \frac{m}{(b-1) b^{\lceil i \alpha_{2} \rceil }}} f\left(\psi_{A}(i) - \frac{m}{(b-1) b^{\lceil i \alpha_{2} \rceil}}\right) b^{i \alpha_{2} \gamma} < \infty.
\end{align*} 

\section{Further discussion} \label{sec:further}
\subsection{Hausdorff dimension} \label{sec:dimension}
When $b$ and $t$ are multiplicatively dependent, we have computed the Hausdorff $f$-measure of $W_{t}(\psi) \cap C(b,D)$ for any dimension function such that $r^{-\gamma} f(r)$ is monotonic (\cref{thm:dependent}). In particular, if the set $D$ contains at least one of $0$ and $b-1$, take $f(r) = r^{s}$ for any $s \geq 0$ and we have
\begin{align*}
\mathcal{H}^{s} (W_{t}(\psi) \cap C(b,D)) = \begin{cases}
0, &\text{ if } \sum_{n=1}^{\infty} \psi(n)^{s} \times t^{n \gamma} < \infty, \\
\mathcal{H}^{s}(C(b,D)), &\text{ if } \sum_{n=1}^{\infty}  \psi(n)^{s} \times t^{n \gamma} = \infty.
\end{cases}
\end{align*}
Note that
\begin{align*}
\sum_{n=1}^{\infty} \psi(n)^{s} \times t^{n \gamma} = \sum_{n=1}^{\infty} t^{n\left(\gamma + s\frac{\log \psi(n)}{n \log t}\right)}.
\end{align*}
Since $t \geq 2$, the above series converges if $$s > \limsup_{n \to \infty} \frac{-\gamma n \log t }{\log \psi(n)}$$ 
and diverges if $$s < \limsup_{n \to \infty} \frac{-\gamma n \log t }{\log \psi(n)}.$$ 
Let $$\lambda_{\psi} = \liminf_{n \to \infty} \frac{-\log \psi(n)}{n \log t}.$$ For the rest of this article, we assume $\lambda_{\psi} \geq 1$ as otherwise the situation is trivial. By computations above, we deduce the Hausdorff dimension of $W_{t}(\psi) \cap C(b,D)$.

\begin{cor} \label{cor:dimension_dependent}
Suppose $b$ and $t$ are multiplicatively dependent, $D$ contains at least one of $0$ and $b-1$ and $\lambda_{\psi} \geq 1$. Then
$$\dim_{\rm H} W_{t}(\psi) \cap C(b,D) = \frac{\gamma}{\lambda_{\psi}}.$$
\end{cor}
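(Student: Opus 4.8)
The plan is to derive \cref{cor:dimension_dependent} directly from \cref{thm:dependent} by specializing the dimension function to $f(r) = r^s$ and matching the convergence/divergence dichotomy against the definition of Hausdorff dimension. Throughout I work under the standing hypotheses: $b$ and $t$ are multiplicatively dependent, $D$ contains at least one of $0$ and $b-1$, and $\lambda_{\psi} = \liminf_{n\to\infty} \frac{-\log\psi(n)}{n\log t} \ge 1$. Since $C(b,D)$ has Hausdorff dimension $\gamma$, the hypothesis $\lambda_{\psi}\ge 1 \ge \gamma$ (noting $\gamma < 1$) makes $\gamma/\lambda_{\psi} \le \gamma$ a legitimate candidate value.

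First I would establish the \emph{upper bound} $\dim_{\rm H}\bigl(W_t(\psi)\cap C(b,D)\bigr) \le \gamma/\lambda_{\psi}$. Fix any $s > \gamma/\lambda_{\psi}$. By definition of $\lambda_{\psi}$ as a $\liminf$, for all sufficiently large $n$ we have $\frac{-\log\psi(n)}{n\log t} > \gamma/s$, i.e. $\psi(n)^s < t^{-n\gamma}$, so $\psi(n)^s t^{n\gamma} < 1$; more quantitatively, picking $s' $ with $\gamma/\lambda_{\psi} < s' < s$ gives $\psi(n)^{s'} t^{n\gamma}\le 1$ eventually, whence $\psi(n)^s t^{n\gamma}\le \psi(n)^{s-s'}\to 0$ geometrically (since $\psi(n)\le t^{-n\gamma/s'}$ decays exponentially). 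Thus $\sum_n \psi(n)^s t^{n\gamma} < \infty$, and the convergence case of \cref{thm:dependent} (with $f(r)=r^s$, for which $r^{-\gamma}f(r)=r^{s-\gamma}$ is monotonic as $s\ge\gamma$) yields $\mathcal{H}^s(W_t(\psi)\cap C(b,D)) = 0$. Hence $\dim_{\rm H}(W_t(\psi)\cap C(b,D))\le s$, and letting $s\downarrow\gamma/\lambda_{\psi}$ gives the upper bound.

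Next I would establish the \emph{lower bound} $\dim_{\rm H}\bigl(W_t(\psi)\cap C(b,D)\bigr)\ge \gamma/\lambda_{\psi}$. Fix any $s$ with $s < \gamma/\lambda_{\psi}$ (we may also assume $s\ge 0$; if $\gamma/\lambda_{\psi}=0$ there is nothing to prove). Then $s/\gamma < 1/\lambda_{\psi}$, so $\lambda_{\psi} < \gamma/s$, and by definition of the $\liminf$ there are infinitely many $n$ with $\frac{-\log\psi(n)}{n\log t} < \gamma/s$, i.e. $\psi(n)^s t^{n\gamma} > 1$ for infinitely many $n$. Consequently $\sum_n \psi(n)^s t^{n\gamma} = \infty$, and the divergence case of \cref{thm:dependent} gives $\mathcal{H}^s(W_t(\psi)\cap C(b,D)) = \mathcal{H}^s(C(b,D))$. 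Since $s < \gamma/\lambda_{\psi}\le\gamma = \dim_{\rm H}C(b,D)$, we have $\mathcal{H}^s(C(b,D)) = \infty > 0$, so $\dim_{\rm H}(W_t(\psi)\cap C(b,D))\ge s$; letting $s\uparrow\gamma/\lambda_{\psi}$ completes the proof.

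I do not expect any serious obstacle here: the argument is the standard translation of a Hausdorff-measure zero-full law into a dimension formula. The only points requiring a little care are (i) verifying that $f(r)=r^s$ is an admissible dimension function with $r^{-\gamma}f(r)$ monotonic, which holds precisely because the relevant range of $s$ satisfies $s\ge\gamma$ in the upper-bound half and the lower-bound half only needs $s<\gamma$; and (ii) correctly reading off the convergence versus divergence of $\sum_n t^{n(\gamma + s\log\psi(n)/(n\log t))}$ from the $\liminf$ — here one must be slightly attentive that a $\liminf$ controls the divergence (``infinitely often the exponent is positive'') and a companion upper estimate controls the convergence, exactly as laid out in the displayed computation preceding the corollary. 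Both are routine, so the corollary follows immediately.
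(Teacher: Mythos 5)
Your proposal is correct and follows essentially the same route as the paper: the paper's own justification of \cref{cor:dimension_dependent} is precisely the displayed computation preceding it, which specializes \cref{thm:dependent} to $f(r)=r^s$ and reads off convergence of $\sum_n \psi(n)^s t^{n\gamma}$ for $s>\gamma/\lambda_{\psi}$ and divergence for $s<\gamma/\lambda_{\psi}$ from the $\liminf$ defining $\lambda_{\psi}$. The only cosmetic remark is that $r^{-\gamma}f(r)=r^{s-\gamma}$ is monotonic for every $s>0$, not only for $s\ge\gamma$, which is what one actually needs in the upper-bound half where $s$ may lie in $(\gamma/\lambda_{\psi},\gamma)$.
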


\subsection{Multiplicatively independent case}
We first consider the situation that $b$ and $t$ have the same prime divisors but not necessarily multiplicatively dependent. Using \cref{thm:main}, we deduce that
\begin{align*}
\mathcal{H}^{s} (W_{t}(\psi) \cap C(b,D)) = \begin{cases}
0, &\text{ if } \sum_{n=1}^{\infty} \psi(n)^{s} \times b^{n \alpha_{2} \gamma} < \infty, \\
\mathcal{H}^{s}(C(b,D)), &\text{ if } \sum_{n=1}^{\infty}  \psi(n)^{s} \times b^{n \alpha_{1} \gamma} = \infty
\end{cases}
\end{align*}
if $D$ contains at least one of $0$ and $b-1$. Then a similar computation as in the previous section yields
\begin{cor} \label{cor:dim_same_prime}
Suppose $b$ and $t$ have the same prime divisors and $D$ contains at least one of $0$ and $b-1$. If $\lambda_{\psi} \geq 1$, then
$$\frac{\alpha_{1} \log b}{\log t} \frac{\gamma}{\lambda_{\psi}} \leq \dim_{\rm H} W_{t}(\psi) \cap C(b,D) \leq \frac{\alpha_{2} \log b}{\log t} \frac{\gamma}{\lambda_{\psi}}.$$
\end{cor}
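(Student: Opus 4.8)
The plan is to specialize the Hausdorff measure dichotomy displayed immediately before the statement — which is the case $A=(n)_{n\ge 1}$ of \cref{thm:main}, noting that the hypothesis ``$D$ contains at least one of $0$ and $b-1$'' forces the integer $m$ there to be $0$ — to the dimension functions $f(r)=r^{s}$, $s\ge 0$, for which $r^{-\gamma}f(r)=r^{s-\gamma}$ is indeed monotonic. This reduces the problem to locating the critical exponents at which the convergence series $\sum_{n}\psi(n)^{s}b^{n\alpha_{2}\gamma}$ and the divergence series $\sum_{n}\psi(n)^{s}b^{n\alpha_{1}\gamma}$ change behaviour, exactly as in \cref{sec:dimension}.

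For the upper bound, write the general term of the convergence series as $b^{\,n(\alpha_{2}\gamma+s\log\psi(n)/(n\log b))}$. Since $b\ge 3>1$, a root-test computation shows this series converges whenever $\limsup_{n}\bigl(\alpha_{2}\gamma+s\log\psi(n)/(n\log b)\bigr)<0$, i.e. whenever $s>\alpha_{2}\gamma\big/\liminf_{n}\bigl(-\log\psi(n)/(n\log b)\bigr)$. The key identity is $-\log\psi(n)/(n\log b)=\bigl(-\log\psi(n)/(n\log t)\bigr)\cdot(\log t/\log b)$, so that $\liminf_{n}\bigl(-\log\psi(n)/(n\log b)\bigr)=(\log t/\log b)\,\lambda_{\psi}$. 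Hence the convergence series is finite for every $s>\tfrac{\alpha_{2}\log b}{\log t}\tfrac{\gamma}{\lambda_{\psi}}$, and for such $s$ the dichotomy gives $\mathcal{H}^{s}(W_{t}(\psi)\cap C(b,D))=0$; taking the infimum over such $s$ yields $\dim_{\rm H}\bigl(W_{t}(\psi)\cap C(b,D)\bigr)\le\tfrac{\alpha_{2}\log b}{\log t}\tfrac{\gamma}{\lambda_{\psi}}$.

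For the lower bound, the same computation applied to $\sum_{n}\psi(n)^{s}b^{n\alpha_{1}\gamma}$ shows this series diverges whenever $\limsup_{n}\bigl(\alpha_{1}\gamma+s\log\psi(n)/(n\log b)\bigr)>0$, i.e. whenever $s<\tfrac{\alpha_{1}\log b}{\log t}\tfrac{\gamma}{\lambda_{\psi}}$. For such $s$ the dichotomy gives $\mathcal{H}^{s}(W_{t}(\psi)\cap C(b,D))=\mathcal{H}^{s}(C(b,D))$, and to conclude $\dim_{\rm H}\ge s$ I need this to be positive. This holds because $s<\tfrac{\alpha_{1}\log b}{\log t}\tfrac{\gamma}{\lambda_{\psi}}\le\gamma=\dim_{\rm H}C(b,D)$, whence $\mathcal{H}^{s}(C(b,D))=\infty$. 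The inequality $\tfrac{\alpha_{1}\log b}{\log t}\le 1$ used here is the elementary fact $\alpha_{1}\log b\le\log t$: writing $\log t=\sum_{q\mid b}v_{q}(t)\log q$ and using $v_{q}(t)\ge\alpha_{1}v_{q}(b)$ for every prime $q\mid b$ gives it at once (dually one gets $\log t\le\alpha_{2}\log b$). Taking the supremum over the admissible $s$ finishes the proof.

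I expect no serious obstacle: the argument is the dimension bookkeeping of \cref{sec:dimension} carried out with the two (now distinct) exponents $\alpha_{1}\gamma$ and $\alpha_{2}\gamma$ in place of the single exponent $\gamma\log t/\log b$. The only points demanding a little care are the elementary inequalities $\alpha_{1}\log b\le\log t\le\alpha_{2}\log b$, which keep the two bounds consistent and the lower bound below $\dim_{\rm H}C(b,D)$, and the observation that the divergence half of the dichotomy is genuinely informative precisely because $\mathcal{H}^{s}(C(b,D))>0$ throughout the relevant range of $s$.
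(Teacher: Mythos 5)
Your proposal is correct and follows essentially the same route as the paper: specialize the $\mathcal{H}^{s}$-dichotomy obtained from \cref{thm:main} (with $m=0$ and $f(r)=r^{s}$) and run the critical-exponent computation of \cref{sec:dimension} separately for the exponents $\alpha_{2}\gamma$ and $\alpha_{1}\gamma$. Your explicit verification that $\alpha_{1}\log b\le\log t\le\alpha_{2}\log b$ (so that the divergence range of $s$ stays below $\gamma$ and $\mathcal{H}^{s}(C(b,D))>0$ there) is a detail the paper only asserts, and it is a welcome addition.
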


Based on Corollary \ref{cor:dimension_dependent} and note that $$\frac{\alpha_{1} \log b}{\log t} \leq 1 \leq \frac{\alpha_{2} \log b}{\log t},$$ we propose the following conjecture.
\begin{conj} \label{conj:dim_same_prime}
Suppose $b$ and $t$ have the same prime divisors, $D$ contains at least one of $0$ and $b-1$ and $\lambda_{\psi} \geq 1$, then
$$\dim_{\rm H} W_{t}(\psi) \cap C(b,D)= \frac{\gamma}{\lambda_{\psi}}.$$
\end{conj}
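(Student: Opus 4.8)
The plan is to prove the two inequalities separately. The upper bound is routine; the lower bound is where the difficulty lies.

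For $\dim_{\rm H}(W_t(\psi)\cap C(b,D))\le\gamma/\lambda_\psi$ I would argue directly, bypassing the crude containments underlying \cref{cor:dim_same_prime}. Set $M_n=\#\{0\le p\le t^n\colon B(p/t^n,\psi(n))\cap C(b,D)\ne\emptyset\}$, so that, for each $N$, the set $W_t(\psi)\cap C(b,D)$ is covered by the $M_n$ balls of radius $\psi(n)$ over all $n\ge N$. A box-counting estimate for the $\psi(n)$-neighbourhood of $C(b,D)$ intersected with the arithmetic progression $\{p/t^n\}$, combined with $\psi(n)\le t^{-n(1-\varepsilon)}$ for all large $n$ (valid since $\lambda_\psi\ge1$), gives $M_n\ll_\varepsilon t^{n(\gamma+\varepsilon(1-\gamma))}$; hence $\sum_n M_n\psi(n)^s<\infty$ whenever $s>(\gamma+\varepsilon(1-\gamma))/\lambda_\psi$, and letting $\varepsilon\to0$ yields $\mathcal{H}^s(W_t(\psi)\cap C(b,D))=0$ for every $s>\gamma/\lambda_\psi$. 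This half uses neither the hypothesis on $D$ nor the shared-prime assumption.

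For $\dim_{\rm H}(W_t(\psi)\cap C(b,D))\ge\gamma/\lambda_\psi$ I would rerun the scheme of \cref{sec:A} and \cref{sec:dependent}: use \cref{lem:intersection} to recentre the relevant balls $B(p/t^n,\psi(n))$ on $C(b,D)$, establish a quasi-independence estimate for $\mu=\mathcal{H}^\gamma\vert_{C(b,D)}$ as in \cref{lem:quasi-independence}, apply \eqref{eq:Div_BC} and \cref{lem:full_measure} to obtain full $\mu$-measure for the corresponding shrunk limsup set, and transfer to $\mathcal{H}^s$ for each $s<\gamma/\lambda_\psi$ via the mass transference principle \cref{thm:MTP}. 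Everything then reduces to the divergence of $\sum_n M_n\psi(n)^s$ for all $s<\gamma/\lambda_\psi$, that is, to showing $M_n\gg t^{n\gamma(1-o(1))}$ along the subsequence realising $\lambda_\psi$.

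This last point is the main obstacle, and it is genuinely hard. When $b$ and $t$ are multiplicatively independent no $t$-adic rational is an endpoint of $C(b,D)$, and whether the points $\{p/t^n\}$ accumulate near $C(b,D)$ as densely as its $\gamma$-dimensional structure would predict is a Diophantine question of the same character as the times-$t$/times-$b$ phenomena behind \cref{conj:Velani}. Since $b$ and $t$ share all prime divisors, one at least knows that every $p/t^n$ equals $q/b^{\lceil n\alpha_2\rceil}$ with $q\equiv 0\pmod{b^{\lceil n\alpha_2\rceil}/t^n}$, so $M_n$ is controlled, prime by prime, by the interplay between the constraint ``the base-$b$ digits of $q$ lie in $D$'' and this divisibility, which I would try to analyse $q$-adically. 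But checking small cases (for instance $b=6$, $t=12$, $D=\{0,1\}$, $\psi(n)=t^{-n\lambda}$) suggests that $M_n$ can drop to $b^{n\alpha_1\gamma}$ for rapidly decaying $\psi$ while approaching $t^{n\gamma}$ for slowly decaying $\psi$; thus the present plan proves the conjectured equality only where $M_n$ can be pinned at $t^{n\gamma}$, and determining the true order of $M_n$ in general — and, if necessary, replacing $\gamma/\lambda_\psi$ by a value depending more delicately on $\lambda_\psi$ and lying inside the window of \cref{cor:dim_same_prime} — is the crux.
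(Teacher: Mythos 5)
The statement you are trying to prove is \cref{conj:dim_same_prime}, which is an open conjecture: the paper offers no proof, only the two-sided sandwich $\frac{\alpha_{1}\log b}{\log t}\frac{\gamma}{\lambda_{\psi}} \leq \dim_{\rm H} W_{t}(\psi)\cap C(b,D) \leq \frac{\alpha_{2}\log b}{\log t}\frac{\gamma}{\lambda_{\psi}}$ of \cref{cor:dim_same_prime} together with the analogy to \cref{cor:dimension_dependent}. Your proposal is candid about this and does not close the gap, so it is not a proof. That said, your upper-bound half is sound and is genuinely more than what the paper records: covering $C(b,D)$ by $\asymp t^{n(1-\varepsilon)\gamma}$ intervals of length $t^{-n(1-\varepsilon)}$ (legitimate since $\lambda_{\psi}\geq 1$ forces $\psi(n)\leq t^{-n(1-\varepsilon)}$ eventually) does give $M_{n}\ll t^{n(\gamma+\varepsilon(1-\gamma))}$, hence $\sum_{n}M_{n}\psi(n)^{s}<\infty$ for every $s>\gamma/\lambda_{\psi}$ and $\dim_{\rm H}W_{t}(\psi)\cap C(b,D)\leq \gamma/\lambda_{\psi}$ unconditionally, improving the right-hand side of \cref{cor:dim_same_prime} (where $\frac{\alpha_{2}\log b}{\log t}\geq 1$). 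This half is worth writing out in full.

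The genuine gap is the lower bound, and you have located it correctly yourself: running the scheme of \cref{sec:A} and \cref{sec:dependent} requires, after recentering via \cref{lem:intersection}, a lower bound $M_{n}\gg t^{n\gamma(1-o(1))}$ on the number of points $p/t^{n}$ lying within $\psi(n)$ of $C(b,D)$, together with the quasi-independence of the resulting balls. The only such count available in the shared-primes setting comes from the sub-progression $\{q/b^{\lfloor \alpha_{1}a_{n}\rfloor}\}$, which yields $\asymp b^{\lfloor n\alpha_{1}\rfloor\gamma}$ points and therefore reproduces exactly the $\frac{\alpha_{1}\log b}{\log t}\frac{\gamma}{\lambda_{\psi}}$ of \cref{cor:dim_same_prime}, not $\frac{\gamma}{\lambda_{\psi}}$. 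Whether the remaining $t$-adic rationals $p/t^{n}=q/b^{\lceil n\alpha_{2}\rceil}$ (those with $b^{\lceil n\alpha_{2}\rceil}/t^{n}\mid q$ but $q$ not reducible to denominator $b^{\lfloor n\alpha_{1}\rfloor}$) accumulate on $C(b,D)$ at the rate $t^{n\gamma}$ is precisely the digit-versus-divisibility problem you describe, and nothing in the paper's toolkit (\cref{lem:quasi-independence}, \cref{thm:MTP}) addresses it. One small correction to your discussion: under the hypothesis of this conjecture every $p/t^{n}$ \emph{is} a $b$-adic rational, so the assertion that no $t$-adic rational is an endpoint of $C(b,D)$ is false here (e.g.\ $0=0/t^{n}$ is an endpoint when $0\in D$); the obstruction is not the absence of endpoints but the sparsity of the admissible numerators $q$ among the $b$-adic rationals whose digit expansions stay in $D$.
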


In \cite{ACY20}, the authors gave a heuristic of Conjecture \ref{conj:Velani}. Here we modify their argument to formulate a conjecture for $\mathcal{H}^{f} (W_{t}(\psi) \cap C(b,D))$ when the sets of prime divisors of $b$ and $t$ are different and $D$ contains at least one of $0$ and $b-1$. For any big integer $n$, choose another integer $m$ such that $b^{-m} \asymp \psi(n)$. Divide $[0,1]$ into $b^m$ intervals of length $b^{-m}$, among them there are approximately $b^{m \gamma}$ intervals that intersect with $C(b,D)$. If the sets of prime divisors of $b$ and $t$ are different, the distribution of points $p t^{-n}$ should be random with respect to those length $b^{-m}$ intervals. So in the union
$$\bigcup_{0 \leq p \leq t^{n}} B\left(\frac{p}{t^n}, \psi(n)\right),$$
there are about $t^n b^{m (\gamma - 1)}$ balls that intersect with $C(b,D)$. Therefore
\begin{align*}
\mathcal{H}^{f} (W_{t}(\psi) \cap C(b,D)) \ll& \sum_{n=n_{0}}^{\infty} f(\psi(n)) t^{n}  b^{m (\gamma - 1)} \\
\ll& \sum_{n=n_{0}}^{\infty} f(\psi(n)) t^{n} \psi(n)^{1-\gamma}
\end{align*}
for any $n_{0} > 0$. Based on above heuristic argument, we propose the following conjecture.
\begin{conj}  \label{conj:indepedent}
Let $C(b,D)$ be a missing digit set with Hausdorff dimension $\gamma$, the set $D$ contains at least one of $0$ and $b-1$, $t \geq 2$ be an integer satisfying the sets of prime divisors of $b$ and $t$ are different, $f$ be a dimension function such that $r^{-\gamma} f(r)$ is monotonic, and $\psi : \N \to (0,\infty)$ be a function. Then
\begin{align*}
\mathcal{H}^{f} (W_{t}(\psi) \cap C(b,D)) = \begin{cases}
0, \hspace{-0.45cm} &\text{ if } \sum\limits_{n=1}^{\infty} f(\psi(n)) t^{n} \psi(n)^{1-\gamma} < \infty, \\
\mathcal{H}^{f}(C(b,D)), \hspace{-0.45cm} &\text{ if } \sum\limits_{n=1}^{\infty}  f(\psi(n)) t^{n} \psi(n)^{1-\gamma}= \infty.
\end{cases}
\end{align*}
\end{conj}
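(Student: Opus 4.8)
\cref{conj:indepedent} is, to our knowledge, open; it contains \cref{conj:Velani} (take $b=3$, $D=\{0,2\}$, $t=2$ and $f(r)=r^{\log 2/\log 3}$) as a special case, and in fact is stronger as it imposes no monotonicity on $\psi$. What follows is the line of attack we would pursue, following the template of \cref{sec:A}, together with the genuine obstacle. As usual one may first reduce to $\psi(n)<t^{-n}/2$ for all large $n$: otherwise $W_t(\psi)=[0,1]$, and one checks routinely that the divergence series diverges whenever $\mathcal{H}^f(C(b,D))>0$, so the conclusion is immediate.

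\textbf{Convergence part, and the obstacle.} Cover $W_t(\psi)\cap C(b,D)$ by those balls $B(p/t^n,\psi(n))$, $0\le p\le t^n$, that meet $C(b,D)$, so that $\mathcal{H}^f_\rho(W_t(\psi)\cap C(b,D))\ll\sum_{n\ge n_0}f(\psi(n))\,N_n$ with $N_n=\#\{p:\mathrm{dist}(p/t^n,C(b,D))<\psi(n)\}$. Choosing $m=m(n)$ with $b^{-m}\asymp\psi(n)$ and using $C(b,D)\subset C_m(b,D)$, the $\psi(n)$-neighbourhood of $C(b,D)$ lies in a union $U_n$ of $\ll b^{m\gamma}\asymp\psi(n)^{-\gamma}$ base-$b$ intervals of length $\asymp\psi(n)$ and total length $\asymp\psi(n)^{1-\gamma}$; since $\{p/t^n:0\le p\le t^n\}$ is equally spaced with gap $t^{-n}$, every interval of length $L$ contains $t^nL+O(1)$ of these points, so
\[
N_n\;\ll\;t^n\psi(n)^{1-\gamma}\,+\,\psi(n)^{-\gamma}.
\]
The first term, fed back into the sum, reproduces exactly the convergence hypothesis and would yield $\mathcal{H}^f(W_t(\psi)\cap C(b,D))=0$; but in the reduced range $t^n\psi(n)<\tfrac12$ the parasitic term $\psi(n)^{-\gamma}$ dominates the first, so this elementary bound is no better than the trivial one, and removing $\psi(n)^{-\gamma}$ is the whole difficulty. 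Doing so requires genuine cancellation among the $O(1)$ errors as the base-$b$ interval ranges over $U_n$: writing $B_1(x)=\{x\}-\tfrac12$ for the sawtooth and $K_n=\{k:kb^{-m}\in C_m(b,D)\}$ for the index set carrying the Cantor structure, one needs a power saving for $\sum_{k\in K_n}\bigl(B_1(kt^nb^{-m})-B_1((k+1)t^nb^{-m})\bigr)$; equivalently, quantitative equidistribution modulo $1$ of the images under multiplication by $t$ of the base-$b$ Cantor structure, or, on the Fourier side, decay on average over $k$ of the coefficients $\widehat\mu(kt^n)$ of $\mu=\mathcal{H}^{\gamma}\vert_{C(b,D)}$. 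Since $\mu$ is in general non-Rajchman, and the frequencies $kt^n$ must avoid the resonances near multiples of powers of $b$ at which $\widehat\mu$ fails to decay, such an estimate belongs to the circle of problems around Furstenberg's $\times b$, $\times t$ rigidity \cite{Furstenberg70}; we do not know how to establish it unconditionally, and the partial results \cite{ACY20,ABCY22,Baker22} for $b=3$, $t=2$ address precisely counting questions of this type.

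\textbf{Divergence part.} Here I would follow \cref{sec:A}. Using that $D$ meets $\{0,b-1\}$, so that endpoints of the level-$m$ cylinders of $C_m(b,D)$ lie in $C(b,D)$, retain only those balls $B(p/t^n,\psi(n))$ whose centre lies well inside a level-$m$ cylinder of $C_m(b,D)$ (taking $b^{-m}$ to be $\psi(n)$ times a small fixed constant, so the good zone keeps proportional length), re-centre each on the nearest point of $C(b,D)$ exactly as in \cref{lem:intersection}, and let $E_n$ be the union over the retained $p$ of $B(p/t^n,\psi(n))\cap C(b,D)$. One then wants, for every ball $B$ centred in $C(b,D)$, a lower bound $\mu(E_n\cap B)\gg\mu(B)\,t^n\psi(n)$ — i.e. the retained points $p/t^n$ should hit a proportion $\asymp t^n\psi(n)$ of the scale-$\psi(n)$ cylinders of $C(b,D)\cap B$, without clustering — together with the quasi-independence $\mu(E_i\cap E_j)\ll\mu(E_i)\mu(E_j)/\mu(B)$ for $i\neq j$, which asks that the grids $\{p/t^i\}$ and $\{q/t^j\}$ decorrelate relative to $C(b,D)$. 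Granting these, \cref{lem:Div_BC} and \cref{lem:full_measure} give $\mu(\limsup_n E_n)=\mu(C(b,D))$ when $\sum_n t^n\psi(n)=\infty$; running the same argument with radii $f(\psi(n))^{1/\gamma}$ and invoking the Mass Transference Principle (\cref{thm:MTP}) then yields the stated $\mathcal{H}^f$-equality for every admissible gauge, exactly as in \cref{lem:left}. Both required inputs are again instances of the same equidistribution/cancellation phenomenon as in the convergence part. In summary, modulo a single Furstenberg-type statement the methods of this paper would establish \cref{conj:indepedent}; furnishing that statement unconditionally is the principal obstacle, which is why only the heuristic upper bound is recorded above.
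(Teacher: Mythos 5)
The statement is a conjecture in the paper, not a theorem: the paper offers only the heuristic counting argument (choose $m$ with $b^{-m}\asymp\psi(n)$ and assume the points $p/t^{n}$ fall randomly relative to the $\approx b^{m\gamma}$ level-$m$ intervals meeting $C(b,D)$, giving $\approx t^{n}b^{m(\gamma-1)}$ relevant balls), and your convergence-part discussion reproduces exactly this heuristic while correctly isolating the missing ingredient --- a Furstenberg-type equidistribution/Fourier-decay statement for $\widehat{\mu}(kt^{n})$ that is not currently available. Your conclusion that the statement remains open, with only the heuristic upper bound recorded, is consistent with the paper's own treatment.
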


Assuming Conjecture \ref{conj:indepedent}, the Hausdorff dimension of $W_{t}(\psi) \cap C(b,D)$ can be computed in the same way as we did in \cref{sec:dimension}, so the following conjecture holds if Conjecture \ref{conj:indepedent} is valid.
\begin{conj} \label{conj:dimension_independent}
Let $C(b,D)$ be a missing digit set with Hausdorff dimension $\gamma$, with $D$ containing at least one of $0$ and $b-1$, $t \geq 2$ be an integer satisfying the sets of prime divisors of $b$ and $t$ are different. If $\lambda_{\psi} \geq 1$, then
\begin{equation}
\dim_{\rm H} (W_{t}(\psi) \cap C(b,D)) = \max\left\{\frac{1}{\lambda_{\psi}} + \gamma - 1, 0 \right\}. \label{eq:dimension_independent}
\end{equation}
\end{conj}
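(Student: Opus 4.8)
The plan is to deduce \eqref{eq:dimension_independent} from \cref{conj:indepedent} by exactly the Hausdorff dimension computation already carried out in \cref{sec:dimension} for the multiplicatively dependent case. Granting \cref{conj:indepedent}, I would apply it with $f(r)=r^{s}$ for $s\geq 0$ (for which $r^{-\gamma}f(r)=r^{s-\gamma}$ is monotonic), obtaining
\begin{align*}
\mathcal{H}^{s}(W_{t}(\psi)\cap C(b,D))=\begin{cases} 0, &\text{if }\sum_{n=1}^{\infty}\psi(n)^{s+1-\gamma}\, t^{n}<\infty,\\ \mathcal{H}^{s}(C(b,D)), &\text{if }\sum_{n=1}^{\infty}\psi(n)^{s+1-\gamma}\, t^{n}=\infty.\end{cases}
\end{align*}
Writing $c_{n}=-\log\psi(n)/(n\log t)$, so that $\liminf_{n\to\infty}c_{n}=\lambda_{\psi}$, the $n$-th term equals $t^{\,n(1-(s+1-\gamma)c_{n})}$. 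Since $\gamma<1$, for every $s\geq 0$ we have $s+1-\gamma\geq 1-\gamma>0$, so $\limsup_{n\to\infty}\bigl(1-(s+1-\gamma)c_{n}\bigr)=1-(s+1-\gamma)\lambda_{\psi}$.

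For the upper bound, I would fix $s>\tfrac{1}{\lambda_{\psi}}+\gamma-1$; equivalently $(s+1-\gamma)\lambda_{\psi}>1$, so there is $\delta>0$ with $1-(s+1-\gamma)c_{n}\leq -\delta$ for all large $n$, whence the series is dominated by $\sum t^{-\delta n}<\infty$ and $\mathcal{H}^{s}(W_{t}(\psi)\cap C(b,D))=0$. This gives $\dim_{\rm H}(W_{t}(\psi)\cap C(b,D))\leq \tfrac{1}{\lambda_{\psi}}+\gamma-1$, and combined with the trivial bound $\dim_{\rm H}\geq 0$ it yields the ``$\leq$'' in \eqref{eq:dimension_independent}; in particular it settles the whole statement when $\tfrac{1}{\lambda_{\psi}}+\gamma-1\leq 0$.

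For the lower bound, assume $\tfrac{1}{\lambda_{\psi}}+\gamma-1>0$ and fix $s$ with $0\leq s<\tfrac{1}{\lambda_{\psi}}+\gamma-1$. Since $\lambda_{\psi}\geq 1$ we have $\tfrac{1}{\lambda_{\psi}}+\gamma-1\leq\gamma$, so $s<\gamma$ and hence $\mathcal{H}^{s}(C(b,D))=\infty$. Moreover $(s+1-\gamma)\lambda_{\psi}<1$, so $\eta:=1-(s+1-\gamma)\lambda_{\psi}>0$, and there are infinitely many $n$ with $1-(s+1-\gamma)c_{n}>\eta/2$, for which the corresponding terms exceed $t^{\,\eta n/2}\to\infty$; thus the series diverges. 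Then \cref{conj:indepedent} gives $\mathcal{H}^{s}(W_{t}(\psi)\cap C(b,D))=\mathcal{H}^{s}(C(b,D))=\infty$, so $\dim_{\rm H}(W_{t}(\psi)\cap C(b,D))\geq s$; letting $s\uparrow\tfrac{1}{\lambda_{\psi}}+\gamma-1$ completes the ``$\geq$'' direction, and \eqref{eq:dimension_independent} follows.

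The only delicate points in this conditional derivation are bookkeeping: keeping track that $s+1-\gamma>0$ throughout (automatic from $\gamma<1$), and invoking $\lambda_{\psi}\geq 1$ precisely to guarantee $s<\gamma$ on the divergence side so that $\mathcal{H}^{s}(C(b,D))$ is positive. The genuine obstacle is not here at all but in \cref{conj:indepedent} itself: proving the zero--full law in the multiplicatively independent case demands quantitative control of how the rationals $p/t^{n}$ distribute among the level intervals of $C(b,D)$ of scale $\asymp\psi(n)$, a ``times $b$, times $t$'' equidistribution problem in the spirit of Furstenberg's conjecture for which no adequate tool is currently available. This is why both \cref{conj:indepedent} and the present statement are recorded only conjecturally, the latter being a formal consequence of the former.
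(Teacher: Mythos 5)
Your conditional derivation is correct and is precisely the route the paper takes: the statement is recorded only as a formal consequence of \cref{conj:indepedent}, obtained by taking $f(r)=r^{s}$ and running the same convergence/divergence computation used for \cref{cor:dimension_dependent} in \cref{sec:dimension}, and you rightly locate the genuine obstacle in \cref{conj:indepedent} itself rather than in this bookkeeping. No gaps.
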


When $b=3$ and $t$ is not a power of $3$, formula \eqref{eq:dimension_independent} was already conjectured by Bugeaud and Durand \cite[Conjecture 1.2]{BD16}. Note that this formula is very different with the multiplicatively dependent case Corollary \ref{cor:dimension_dependent} and the same prime divisors case Corollary \ref{cor:dim_same_prime}. It is known that $\dim_{\rm H} (W_{t}(\psi)) = 1/\lambda_{\psi}$ (see \cite{HV95}), so Corollary \ref{cor:dimension_dependent} says that the Hausdorff dimension of the intersection $W_{t}(\psi) \cap C(b,D)$ is the product of two dimensions when $b$ and $t$ are multiplicatively dependent, while Conjecture \ref{conj:dimension_independent} predicts that the Hausdorff dimension of the intersection is the sum of two dimensions minus one when the set of prime divisors of $b$ and $t$ are different. For two unrelated fractals, the Hausdorff dimension of their intersection is likely equal to the sum of dimensions minus the dimension of the ambient space (see \cite[Chapter 8]{Falconer90}), so Conjecture \ref{conj:dimension_independent} is consistent with the intuition that $W_{t}(\psi)$ and $C(b,D)$ are \enquote{unrelated} when the sets of prime divisors of $b$ and $t$ are different. This kind of formula also appears in \cite{HLX22,WWX17} as well as in the study of other limsup sets induced by recurrence of orbits in dynamical systems \cite{CWW19,HLSW22}. Note that if $b$ and $t$ are multiplicatively independent but have the same prime divisors, then Corollary \ref{cor:dim_same_prime} implies that
$$\dim_{\rm H} W_{t}(\psi) \cap C(b,D) \quad \text{ and } \quad \max\left\{\frac{1}{\lambda_{\psi}} + \gamma - 1, 0 \right\}$$ 
are not necessarily equal since the right hand side becomes $0$ when $\lambda_{\psi}$ is large while the lower bound in Corollary \ref{cor:dim_same_prime} is always positive.

\subsection{Large intersection property}
We are also able to show the large intersection property of $W_{t}(\psi) \cap C(b,D)$ when $\psi(n) = t^{-\theta n}$ for some $\theta > 1$. The large intersection property was introduced by Falconer \cite{Falconer94} and has many applications, we refer to \cite{HLY22} and references therein. Let $(X,\mathscr{B},\mu,d)$ be a compact metric space such that $\mu$ is $\gamma$-Ahlfors regular. When $X = \R^d$, the set $\mathcal{G}^{s}(X)$ is defined to be the class of all $G_{\delta}$ sets $F$ in $X$ such that $$ \dim_{\rm H} \cap_{n=1}^{\infty} g_{n}(F) \geq s$$ holds for all sequences of similarity transformations $\{g_{n}\}_{n \geq 1}$. Definition of $\mathcal{G}^{s}(X)$ for general metric spaces is more complicated and can be found in \cite{NS22}. For any Borel set $U$, define $$I_{s}(\mu,U) = \int_{U} \int_{U} \abs{x-y}^{-s} d\mu(x) d\mu(y).$$

Next we state two results that enable us to deduce the large intersection property of $W_{t}(\psi) \cap C(b,D)$.

\begin{thm}[{\cite[Theorem 1.1]{HLY22}}] \label{thm:large_intersection}
Suppose $\{B_{n}\}_{n \geq 1}$ is a sequence of balls in $X$ whose radii decrease to $0$ as $n \to \infty$. For each $n \geq 1$, let $E_{n}$ be an open subset of $B_{n}$ and define
$$\lambda = \sup \left\{s \geq 0 \colon \sup_{n \geq 1} \frac{\mu(B_{n}) I_{s}(\mu,E_{n})}{\mu(E_{n})^2} < \infty \right\}.$$
Then $\mu(\limsup_{n \to \infty} B_{n})  = \mu(X)$ implies $\limsup_{n \to \infty} E_{n} \in \mathcal{G}^{\lambda}(X)$.
\end{thm}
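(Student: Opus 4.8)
The plan is to reduce \cref{thm:large_intersection} to a scale‑uniform Hausdorff‑content estimate for $F:=\limsup_{n\to\infty}E_{n}$ and then to produce that estimate by a Cantor‑type construction fed by the energy hypothesis; this parallels the proof of the mass transference principle (\cref{thm:MTP}), of which \cref{thm:large_intersection} is essentially a large‑intersection enhancement. Since each $E_{n}$ is open, $F=\bigcap_{N}\bigcup_{n\ge N}E_{n}$ is a $G_{\delta}$ set. By the characterization of the large intersection classes in terms of Hausdorff content that is valid in $\gamma$-Ahlfors regular metric spaces (see \cite{NS22}, extending Falconer's theorem), it suffices to prove the following: for every $t<\lambda$ and every ball $B$ in $X$,
\begin{align*}
\mathcal{H}^{t}_{\infty}(F\cap B)\gg\mathcal{H}^{t}_{\infty}(B),
\end{align*}
with implied constant depending only on $t$ and the data; the reverse inequality is trivial since $F\cap B\subset B$, and a uniform lower bound of this type is known to force $F$ to have full content in every ball, hence $F\in\mathcal{G}^{\lambda}(X)$. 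Note that $\lambda\le\gamma$, because $I_{s}(\mu,E_{n})=\infty$ whenever $s>\gamma$, so the content characterization is applied only at exponents $t\le\gamma$.

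Next I would convert the energy bound into a usable Frostman weight on each $E_{n}$. Fix $t<s<\lambda$ with $M:=\sup_{n}\mu(B_{n})I_{s}(\mu,E_{n})/\mu(E_{n})^{2}<\infty$ and put $\nu_{n}=\mu|_{E_{n}}/\mu(E_{n})$, a probability measure on $\overline{E_{n}}$ with $I_{s}(\nu_{n})=I_{s}(\mu,E_{n})/\mu(E_{n})^{2}\le M/\mu(B_{n})$. Applying Chebyshev's inequality to the $s$-potential $U^{\nu_{n}}_{s}(x)=\int|x-y|^{-s}\,d\nu_{n}(y)$ (whose $\nu_{n}$-average equals $I_{s}(\nu_{n})$) produces a set $A_{n}\subset E_{n}$ with $\nu_{n}(A_{n})\ge\tfrac12$ on which $U^{\nu_{n}}_{s}\le 2I_{s}(\nu_{n})$; the restriction $\omega_{n}:=\nu_{n}|_{A_{n}}$ then satisfies $\omega_{n}(X)\ge\tfrac12$ and $\omega_{n}(B(x,r))\ll I_{s}(\nu_{n})\,r^{s}\ll M\,r^{s}/\mu(B_{n})$ for all $x$ and $r$. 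By the mass distribution principle this already yields $\mathcal{H}^{s}_{\infty}(E_{n})\gg\mu(B_{n})\asymp r(B_{n})^{\gamma}$; but for the large intersection property it is the weights $\omega_{n}$, carrying controlled mass at all scales, that will be fed into the construction, not merely this crude content bound.

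Finally I would run the construction. Since $\mu$ is $\gamma$-Ahlfors regular it is doubling, and $\mu(\limsup_{n}B_{n})=\mu(X)$ forces $\mu(B\cap\limsup_{n}B_{n})=\mu(B)>0$ for every ball $B$; in particular every ball of $X$ contains balls $B_{n}$ of arbitrarily small radius. Fixing $B$ and $t<s<\lambda$, I would build a nested sequence of finite families of balls $\mathcal{C}_{0}=\{B\}\supset\mathcal{C}_{1}\supset\cdots$ as follows: inside each cell $Q\in\mathcal{C}_{k}$ choose a ball $B_{n}\subset Q$ with $r(B_{n})$ sufficiently small relative to $r(Q)$, and replace $Q$ by a finite collection of comparable‑size balls of $X$ contained in $A_{n}\subset E_{n}$, selected---using the weight $\omega_{n}$ together with the doubling of $\mu$---so that at exponent $t$ these new cells carry content comparable to that of $Q$; the exponent loss between $r(B_{n})^{\gamma}$ and $r(B_{n})^{s}$ is absorbed precisely because $s>t$ leaves slack, with $r(B_{n})$ chosen accordingly. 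The limit set $\bigcap_{k}\bigcup\mathcal{C}_{k}$ lies in $F\cap B$, and the natural measure it supports obeys a Frostman condition of exponent $t$, so the mass distribution principle delivers $\mathcal{H}^{t}_{\infty}(F\cap B)\gg\mathcal{H}^{t}_{\infty}(B)$ with a constant depending only on $t$, $s$, $M$ and the regularity constants. As $B$ and $t<\lambda$ are arbitrary, the reduction above gives $F\in\mathcal{G}^{\lambda}(X)$.

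I expect the main obstacle to be the bookkeeping in this last step: arranging that the $t$-content is transferred from one generation of the construction to the next with a constant uniform across all scales. This forces a careful joint choice of (i) how small $r(B_{n})$ must be taken at each stage, small enough that the measure‑theoretic input ``$\limsup B_{n}$ has full $\mu$-measure'' is applicable yet not so small that the loss $r(B_{n})^{\gamma-s}$ becomes uncontrollable; (ii) how many well‑separated sub‑cells of $X$ lying in $E_{n}$ can be extracted carrying comparable $\omega_{n}$-mass---here doubling of $\mu$ is what converts mass estimates for $\mu$ into counting estimates for cells, and the gap $s-t>0$ is essential; and (iii) the verification of the Frostman estimate for the limit measure, which comes down to summing a geometric series over scales. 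The remaining ingredients---the content characterization of $\mathcal{G}^{\lambda}(X)$ in the metric‑space setting and the potential‑theoretic facts used in the second step---are standard and may be quoted from \cite{NS22} and classical potential theory.
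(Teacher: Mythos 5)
This statement is not proved in the paper at all: it is imported verbatim as \cite[Theorem~1.1]{HLY22} and used as a black box, so there is no in-paper argument to compare yours against. Judged on its own terms, your outline follows the expected route (the Falconer/Negreira--Sequeira content characterization of $\mathcal{G}^{s}(X)$, Chebyshev's inequality applied to the $s$-potential to convert the energy bound into a Frostman weight $\omega_{n}$ on a half-mass subset $A_{n}\subset E_{n}$, then a Cantor-type construction), and the first two steps are sound.

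The gap is in the construction itself. You replace each cell $Q\in\mathcal{C}_{k}$ by sub-cells drawn from a \emph{single} ball $B_{n}\subset Q$ with $r(B_{n})$ ``sufficiently small relative to $r(Q)$''. Then whatever limit measure $m$ the construction supports concentrates all of $m(Q)$ inside $B_{n}$, so at the intermediate scale $u=r(B_{n})\ll r(Q)$ the Frostman requirement $m(B(x,u))\ll u^{t}$ forces $m(Q)\ll r(B_{n})^{t}$; equivalently it imposes a \emph{lower} bound $r(B_{n})\gg m(Q)^{1/t}$, in direct conflict with your need to take $r(B_{n})$ small, and nothing in the hypotheses guarantees the existence of some $B_{n}\subset Q$ whose radius is a prescribed proportion of $r(Q)$ (you only know that arbitrarily small ones occur). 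The same obstruction appears if you argue with content instead of measures: retaining only the part of $F\cap Q$ inside one tiny $B_{n}$ caps the $t$-content of the constructed set by $(2r(B_{n}))^{t}\ll r(Q)^{t}$ already at the first generation. The standard repair, used in the proof of the mass transference principle and in \cite{HLY22}, is to combine the full-measure hypothesis with the $5r$-covering lemma to select inside each $Q$ a \emph{finite disjoint family} $B_{n_{1}},\dots,B_{n_{k}}\subset Q$ of small balls with $\sum_{i}\mu(B_{n_{i}})\gg\mu(Q)$, and to spread $m(Q)$ over the corresponding sets $A_{n_{i}}$ in proportion to $\mu(B_{n_{i}})$; only then does the slack $s>t$ let the estimates close across generations. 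With that modification your argument is the expected one, but as written, item (i) of your own list of obstacles is not mere bookkeeping --- it is the point at which the sketch breaks.
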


\begin{lem}[{\cite[Lemma 3.1]{HLY22}}] \label{lem:large_intersection}
Suppose $0 \leq s < \gamma$ and $U$ is a Borel set with diameter $\diam(U) > 0$. Then
$$ I_{s}(\mu,U) \ll \diam(U)^{\gamma - s} \mu(U).$$
\end{lem}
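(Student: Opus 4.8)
The plan is to reduce the double integral to a single-variable potential estimate. Concretely, I would first establish that for every fixed $x \in U$,
$$\int_{U} \abs{x-y}^{-s}\, d\mu(y) \ll \diam(U)^{\gamma-s},$$
with an implied constant depending only on $s$, $\gamma$ and the Ahlfors-regularity data of $\mu$, and in particular independent of $x$ and $U$. Granting this pointwise bound, integrating over $x \in U$ against $\mu$ and applying Tonelli's theorem (legitimate since the integrand is non-negative and measurable on the compact space $X$) would immediately yield $I_{s}(\mu,U) \ll \diam(U)^{\gamma-s}\mu(U)$, which is exactly the claim.

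To prove the pointwise bound I would fix $x \in U$, set $R = \diam(U)$ so that $U \subset B(x,R)$, and decompose $B(x,R)$ into the dyadic annuli $A_{k} = B(x,2^{-k}R)\setminus B(x,2^{-k-1}R)$, $k \geq 0$. On $A_{k}$ one has $\abs{x-y}^{-s} \leq (2^{-k-1}R)^{-s} = 2^{(k+1)s}R^{-s}$, while $\gamma$-Ahlfors regularity gives $\mu(A_{k}) \leq \mu(B(x,2^{-k}R)) \ll (2^{-k}R)^{\gamma}$ as soon as $2^{-k}R \leq r_{0}$; for the finitely many coarse scales with $2^{-k}R > r_{0}$ I would instead use the trivial bounds $\mu(A_{k}) \leq \mu(X) < \infty$ and $\abs{x-y}^{-s} \leq r_{0}^{-s}$, whose total contribution is $\ll R^{\gamma-s}$ (alternatively, since in the intended application $U = E_{n}$ has diameter tending to $0$, one may simply assume $R \leq r_{0}$ from the outset). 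This produces the annular estimate
$$\int_{A_{k}} \abs{x-y}^{-s}\, d\mu(y) \ll 2^{(k+1)s}R^{-s}\cdot(2^{-k}R)^{\gamma} = 2^{s}\,R^{\gamma-s}\,2^{-k(\gamma-s)}.$$

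The crucial observation is the hypothesis $s < \gamma$, which makes $\sum_{k\geq 0}2^{-k(\gamma-s)}$ a convergent geometric series with sum depending only on $\gamma-s$; summing the annular estimates then gives $\int_{U}\abs{x-y}^{-s}\,d\mu(y) \leq \int_{B(x,R)}\abs{x-y}^{-s}\,d\mu(y) \ll R^{\gamma-s}$, uniformly in $x$. The diagonal point $y = x$ causes no trouble, since $\mu(\{x\}) \leq \mu(B(x,r)) \ll r^{\gamma} \to 0$ forces $\mu(\{x\}) = 0$, so the singularity of the integrand is $\mu$-a.e.\ irrelevant and in any case dominated by the annular sum. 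I do not anticipate a genuine obstacle: this is a routine potential-theoretic computation, and the only care needed is in handling the coarse scales where the regularity bound $\mu(B(x,r)) \asymp r^{\gamma}$ is not assumed to hold, together with the (harmless) integrability of the singularity along the diagonal — both dealt with as indicated above.
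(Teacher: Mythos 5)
Your proof is correct. The paper itself gives no proof of this lemma --- it is imported verbatim from \cite[Lemma 3.1]{HLY22} --- but your dyadic-annulus potential estimate is the standard argument for exactly this kind of statement: the uniform pointwise bound $\int_{U}\abs{x-y}^{-s}\,d\mu(y) \ll \diam(U)^{\gamma-s}$ obtained from the upper Ahlfors bound on each annulus together with the geometric series $\sum_{k\geq 0}2^{-k(\gamma-s)}$, which converges precisely because $s<\gamma$, followed by Tonelli. Your treatment of the two delicate points is also sound: the finitely many coarse annuli with $2^{-k}\diam(U)>r_{0}$ contribute a bounded amount, which is indeed $\ll \diam(U)^{\gamma-s}$ because in that regime $r_{0}\leq\diam(U)\leq\diam(X)<\infty$ so that $\diam(U)^{\gamma-s}\gg 1$, and the diagonal is $\mu$-null since upper regularity forces $\mu(\{x\})=0$.
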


In our setting, we take $X = C(b,D)$, $\mu = \mathcal{H}^{\gamma} \vert_{C(b,D)}$ and obtain the following result.
\begin{cor}
Suppose the set $D$ contains at least one of $0$ and $b-1$, $\psi(n) = t^{-\theta n}$ for some $\theta > 1$, $s < \gamma/\theta$ and $\log t/ \log b = \alpha \in \Q$, then
$$W_{t}(\psi) \in \mathcal{G}^{s}(C(b,D)).$$
\end{cor}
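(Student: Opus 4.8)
The plan is to exhibit a large subset of $W_{t}(\psi) \cap C(b,D)$ as a $\limsup$ of balls satisfying the hypotheses of \cref{thm:large_intersection}, and then extract the exponent from the Riesz energy bound of \cref{lem:large_intersection}. Set $X = C(b,D)$ and $\mu = \mathcal{H}^{\gamma}\vert_{C(b,D)}$, which is $\gamma$-Ahlfors regular. Applying the isometry $x \mapsto 1-x$ if necessary (it maps $C(b,D)$ onto the generalized Cantor set whose digit set is $\{b-1-d : d \in D\}$, leaves $W_{t}(\psi)$ invariant, and transports the classes $\mathcal{G}^{s}$), we may assume $0 \in D$, so that $m_{l} = 0$ and $m = 0$. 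Write $\alpha = c/a$ with positive integers $a,c$; then $t^{a} = b^{c}$, hence $t^{ak} = b^{ck}$ and $\psi(ak) = t^{-\theta a k} = b^{-\theta c k}$ for every $k \geq 1$. Restricting the approximating pairs of $W_{t}(\psi)$ to $n = ak$ and to numerators $p$ with $p/b^{ck} = p/t^{ak}$ a left endpoint of a level-$ck$ interval of $C(b,D)$ — such points lie in $C(b,D)$ — we index by the pairs $(i,p)$ with $i \in c\N$ and $p/b^{i} \in L_{i}$, and put
\begin{align*}
E_{(i,p)} = B\!\left(\frac{p}{b^{i}},\, b^{-\theta i}\right) \cap X, \qquad B_{(i,p)} = B\!\left(\frac{p}{b^{i}},\, 2b^{-i}\right) \cap X .
\end{align*}
Enumerating these pairs with $i$ non-decreasing produces a single sequence of balls with radii tending to $0$, and $E_{(i,p)}$ is an open subset of $B_{(i,p)}$ since $b^{-\theta i} < 2 b^{-i}$.

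I would first check the two inputs of \cref{thm:large_intersection}. For every $x \in X$ and every level $i \in c\N$ the point $x$ lies within $b^{-i}$ of the left endpoint of its level-$i$ interval, hence $x \in B_{(i,p)}$ for infinitely many pairs; thus $\limsup_{(i,p)} B_{(i,p)} = X$, and in particular $\mu(\limsup_{(i,p)} B_{(i,p)}) = \mu(X)$, so no Borel--Cantelli or mass transference argument is required for the ``big'' balls. For the ``small'' balls, $E_{(i,p)} = B(p/t^{ak}, \psi(ak)) \cap X$ with $k = i/c$, so membership in infinitely many $E_{(i,p)}$ forces $\abs{x - p/t^{ak}} < \psi(ak)$ for infinitely many $(p, ak)$; therefore $\limsup_{(i,p)} E_{(i,p)} \subset W_{t}(\psi) \cap X$.

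Next I would compute the exponent $\lambda$ of \cref{thm:large_intersection}. Since $p/b^{i} \in C(b,D)$, $\gamma$-Ahlfors regularity gives $\mu(B_{(i,p)}) \asymp b^{-i\gamma}$ and $\mu(E_{(i,p)}) \asymp b^{-\theta i \gamma}$ for all large $i$ (the finitely many remaining pairs contribute only finitely many finite terms to the supremum below and may be ignored), while $0 < \diam(E_{(i,p)}) \leq 2 b^{-\theta i}$. For $0 \leq s < \gamma$, \cref{lem:large_intersection} then yields
\begin{align*}
I_{s}(\mu, E_{(i,p)}) \ll \diam(E_{(i,p)})^{\gamma - s}\, \mu(E_{(i,p)}) \ll b^{-\theta i(\gamma - s)}\, b^{-\theta i \gamma},
\end{align*}
and therefore
\begin{align*}
\frac{\mu(B_{(i,p)})\, I_{s}(\mu, E_{(i,p)})}{\mu(E_{(i,p)})^{2}} \ll \frac{b^{-i\gamma}\, b^{-\theta i(\gamma - s)}\, b^{-\theta i \gamma}}{b^{-2\theta i \gamma}} = b^{\,i(\theta s - \gamma)} .
\end{align*}
Hence the relevant supremum is finite whenever $0 \leq s \leq \gamma/\theta$ (and $\gamma/\theta < \gamma$ since $\theta > 1$), so $\lambda \geq \gamma/\theta$. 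By \cref{thm:large_intersection}, $\limsup_{(i,p)} E_{(i,p)} \in \mathcal{G}^{\lambda}(X) \subset \mathcal{G}^{s}(X)$ for the prescribed $s < \gamma/\theta \leq \lambda$. Finally, $W_{t}(\psi) \cap C(b,D)$ is a $G_{\delta}$ subset of $X$ containing $\limsup_{(i,p)} E_{(i,p)}$, and the large intersection classes are closed under passing to $G_{\delta}$ supersets, so $W_{t}(\psi) \cap C(b,D) \in \mathcal{G}^{s}(C(b,D))$, which is the assertion.

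The computation is mechanical once the balls are chosen; the only real content is the choice itself. The multiplicative dependence $t^{a} = b^{c}$ together with $m = 0$ is exactly what allows the approximating balls $E_{(i,p)}$ to be centered at points of $C(b,D)$, which is needed for the clean estimates $\mu(E_{(i,p)}) \asymp b^{-\theta i \gamma}$ and $\diam(E_{(i,p)}) \ll b^{-\theta i}$. The other point to keep in mind is that the ``big'' balls must be taken genuinely larger — here of radius $\asymp b^{-i}$, i.e.\ the level-$i$ Cantor pieces — so that $\limsup_{(i,p)} B_{(i,p)} = X$ holds for free; this is essential because, for $\theta > 1$, the set $\limsup_{(i,p)} E_{(i,p)}$ has $\mu$-measure zero, so no full-measure statement at the scale of the $E_{(i,p)}$ themselves is available. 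I expect no genuine obstacle beyond getting these bookkeeping choices right.
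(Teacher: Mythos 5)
Your proof is correct and follows essentially the same route as the paper: both apply \cref{thm:large_intersection} with the large balls taken at the $b$-adic scale $b^{-i}\asymp t^{-n}$ (so that their limsup trivially has full measure in $C(b,D)$) and the small balls at the scale $\psi(n)$, and both extract the exponent $\gamma/\theta$ from the energy bound of \cref{lem:large_intersection}, yielding the same ratio $\asymp b^{i(\theta s-\gamma)}$. The only deviations are bookkeeping: the paper applies the theorem to the full family $\{B(p/t^{n},\psi(n))\}$ so that the limsup is exactly $W_{t}(\psi)\cap C(b,D)$, whereas you restrict to the subsequence $n=ak$ and to left endpoints and then invoke the (standard, true, but unstated in the paper) monotonicity of $\mathcal{G}^{s}$ under $G_{\delta}$ supersets, together with invariance under the reflection $x\mapsto 1-x$; these extra steps are harmless.
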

\begin{proof}
Since $\theta > 1$, we have $\theta \alpha n \geq \lceil \alpha n \rceil$ for $n$ big enough. Hence
\begin{align*}
\psi(n) = \frac{1}{t^{\theta n}} = \frac{1}{b^{\theta \alpha n}} \leq \frac{1}{b^{\lceil \alpha n \rceil}}
\end{align*}
for $n$ big enough. Then for all $n$ big enough and any $0 \leq p \leq t^n$, we have $$B\left(\frac{p}{t^{n}},\psi(n)\right) \subset B\left(\frac{q}{b^{\lceil \alpha n \rceil}},\frac{1}{b^{\lceil \alpha n \rceil}}\right)$$
for some $0 \leq q \leq b^{\lceil \alpha n \rceil}$, since $t^n \mid b^{\lceil \alpha n \rceil}$.

If $p t^{-n} = q b^{-\lceil \alpha n \rceil}\notin C(b,D)$, then both balls above are disjoint with $C(b,D)$. Otherwise, the fact that $\mu$ is $\gamma$-Ahlfors regular and $D$ contains at least one of $0$ or $b-1$ imply that for $n$ big enough, we have
\begin{align}
\mu\left(B\left(\frac{q}{b^{\lceil \alpha n \rceil}},\frac{1}{b^{\lceil \alpha n \rceil}}\right)\right) &\ll b^{- \gamma \lceil \alpha n \rceil} \ll t^{-\gamma n}, \label{eq:large_inter_1} \\
\mu\left(B\left(\frac{p}{t^{n}},\psi(n)\right)\right) &\asymp \psi(n)^{\gamma}. \label{eq:large_inter_2}
\end{align}
Then
\begin{align*}
& \mu\left(B\left(\frac{q}{b^{\lceil \alpha n \rceil}},\frac{1}{b^{\lceil \alpha n \rceil}}\right)\right) I_{s}\left(\mu, B\left(\frac{p}{t^{n}},\psi(n)\right)\right) \\
\ll & t^{-\gamma n} \psi(n)^{\gamma - s} \mu\left(B\left(\frac{p}{t^{n}},\psi(n)\right)\right) \quad \text{ by \eqref{eq:large_inter_1} and Lemma \ref{lem:large_intersection},} \\
\ll & t^{-\gamma n} \psi(n)^{- s} \mu\left(B\left(\frac{p}{t^{n}},\psi(n)\right)\right)^2 \quad \text{by \eqref{eq:large_inter_2},} \\
\ll &  t^{n (-\gamma + \theta s)} \mu\left(B\left(\frac{p}{t^{n}},\psi(n)\right)\right)^2.
\end{align*}
For any $s < \gamma / \theta$, we have $\lim_{n \to \infty} t^{n (-\gamma + \theta s)} = 0$. Note that 
$$\limsup_{n \to \infty, 0 \leq q \leq b^{\lceil \alpha n \rceil}} B\left(\frac{q}{b^{\lceil \alpha n \rceil}},\frac{1}{b^{\lceil \alpha n \rceil}}\right) \cap C(b,D) = C(b,D).$$
Therefore \cref{thm:large_intersection} implies that $$W_{t}(\psi) \in \mathcal{G}^{s}(C(b,D)).$$
\end{proof}

{\noindent \bf  Acknowledgements}. The authors thank Bo Wang for helpful discussions, in particular an observation that leads to Corollary \ref{cor:dim_same_prime}. We also thank the anonymous referee for several suggestions that improve the expression of this paper. B. Li was supported by NSFC 12271176 and Guangdong Natural Science Foundation 2024A1515010946. R. Li (corresponding author) was supported by NSFC 12401006 and Guangdong Basic and Applied Basic Research Foundation 2023A1515110272. Y. F. Wu was supported by NSFC 12301110 and Natural Science Foundation of Hunan Province 2023JJ40700.

\bibliographystyle{abbrv}

\end{document}